\theoremstyle{plain}
\newtheorem{theorem}{\bf Theorem}[section]
\newtheorem{proposition}[theorem]{\bf Proposition}
\newtheorem{lemma}[theorem]{\bf Lemma}
\theoremstyle{definition}
\newtheorem{example}[theorem]{\bf Example}
\newcommand{\N}{\mathbb N}
\newcommand{\Z}{\mathbb Z}
\newcommand{\R}{\mathbb R}
\newcommand{\Q}{\mathbb Q}
\newcommand{\F}{\mathbb F}
\renewcommand{\t}{\, | \,}
\newcommand{\BF}{\text{\rm BF}}
 \DeclareMathOperator{\ord}{ord}
\DeclareMathOperator{\spec}{spec} \DeclareMathOperator{\supp}{supp}
\DeclareMathOperator{\Pic}{Pic} 
\DeclareMathOperator{\End}{End} 
 \DeclareMathOperator{\Ca}{\mathsf {Ca}}
\DeclareMathOperator{\Ta}{\mathsf {Ta}}
\newcommand{\red}{{\text{\rm red}}}
\numberwithin{equation}{section}
\renewcommand{\time}{\negthinspace \times \negthinspace}
\newcommand\zeu@Scale{1.25}
\begin{document}

\author{Alfred Geroldinger and Qinghai Zhong}

\address{University of Graz, NAWI Graz \\
Institute for Mathematics and Scientific Computing \\
Heinrichstra{\ss}e 36\\
8010 Graz, Austria}

\email{alfred.geroldinger@uni-graz.at, qinghai.zhong@uni-graz.at}
\urladdr{http://imsc.uni-graz.at/geroldinger, http://qinghai-zhong.weebly.com/}

\keywords{Krull monoids, bounded hereditary prime rings, maximal orders, sets of lengths, sets of distances, elasticities, catenary degrees}

\subjclass[2010]{20M13, 13A05, 13F05, 16H10, 16U30}

\thanks{This work was supported by the Austrian Science Fund FWF, Project Number P28864-N35}

\begin{abstract}
Transfer Krull monoids are a recent concept including all commutative Krull domains and also, for example, wide classes of non-commutative Dedekind domains. We show that transfer Krull monoids are fully elastic (i.e., every rational number between $1$ and the elasticity of the monoid can be realized as the elasticity of an element). In commutative Krull monoids which have sufficiently many prime divisors in all classes of their class group, the set of catenary degrees and the set of tame degrees are intervals. Without the assumption on the distribution of prime divisors, arbitrary finite sets can be realized as sets of catenary degrees and as sets of tame degrees.
\end{abstract}

\title[Sets of Arithmetical Invariants]{Sets of Arithmetical Invariants \\ in Transfer Krull Monoids}
\maketitle

\medskip
\section{Introduction} \label{1}
\medskip

A transfer Krull monoid is a monoid having a weak transfer homomorphism to a commutative Krull monoid or, equivalently, to a monoid of zero-sum sequences. A successful strategy to study the arithmetic of a transfer Krull monoid $H$ runs as follows: first, study the  arithmetic of a monoid of zero-sum sequences $B$  with methods from additive combinatorics and then pull back arithmetic properties from $B$ to the monoid $H$ with the help of the transfer homomorphism. By definition, commutative Krull domains are transfer Krull but, in order to mention a non-commutative example,  also wide classes of hereditary noetherian prime rings turned out to be  transfer Krull. Our main results are formulated in the abstract setting of transfer Krull monoids. The objects we have in mind are discussed in Subsection \ref{sec-2.5} and in Example \ref{4.d}.

Let $H$ be a cancellative monoid and, for simplicity of discussion, suppose that $H$ is commutative.  The ascending chain condition on principal ideals of $H$ guarantees that every element of $H$ can be written as a finite product of irreducible elements. But in general such a factorization need not be unique. Indeed, all factorizations are unique (i.e., the monoid $H$ is factorial) if and only if $H$ is a commutative Krull monoid with trivial class group. Arithmetical invariants, such as elasticities, catenary and tame degrees, describe the non-uniqueness of factorizations. For an element $a \in H$, the elasticity of $a$ is the supremum of  $\ell/k$ over all $k, \ell$ for which there are factorizations of the form $a=u_1 \cdot \ldots \cdot u_k=v_1 \cdot \ldots \cdot v_{\ell}$, where all $u_i$ and $v_j$ are irreducibles. The catenary degree $\mathsf c (a)$ of $a$ is the smallest integer $N$ with the following property: for each two factorizations $z, z'$ of $a$, there exist factorizations $z=z_0, \ldots, z_k=z'$ of $a$ such that, for each $i \in [1,k]$, $z_i$ arises from $z_{i-1}$ by replacing at most $N$ atoms from $z_{i-1}$ by at most $N$ new atoms. The elasticity $\rho (H)$ of $H$ is the supremum over all $\rho (a)$ and the catenary degree $\mathsf c (H)$ of $H$ is the supremum over all $\mathsf c (a)$. By definition, we have $\mathsf c (H)=0$ if and only if $H$ is factorial and if this holds, then $\rho (H)=1$. The elasticity and the catenary degree are classical invariants in factorization theory but only in the last couple of years the full set of elasticities $\{ \rho (a) \mid a \in H \}$ and the set of all catenary degrees $\{ \mathsf c (a) \mid a \in H\}$ found attention in the literature. Their study is the goal of the present paper.

In Section \ref{3} we show that in a transfer Krull monoid $H$ every rational number lying between $1$ and $\rho (H)$ can be realized as the elasticity of an element $a \in H$ (Theorem \ref{3.1}). In Section \ref{4} we study sets of catenary degrees and related sets of distances and of minimal relations, and in Section \ref{5} we study the set of tame degrees. The main results are Theorems \ref{4.b} and \ref{5.2}. Roughly speaking, these results say that, if a commutative Krull monoid has sufficiently many prime divisors in all classes, then all sets of invariants under consideration are intervals. It is comparatively easy to show that without any assumption on the distribution of prime divisors any finite set can occur as any of these sets of invariants (Propositions \ref{4.c} and \ref{5.5}).

\medskip
\section{Background on the arithmetic of transfer Krull monoids} \label{2}
\medskip

\noindent
\subsection{\bf Notation.} \label{sec-2.1}
We denote by $\N$ the set of positive integers and set $\N_0 = \N \cup \{0\}$. If $a, b \in \R$, we write $[a, b] = \{ x \in \Z \mid a \le x \le b\}$ for the discrete interval from $a$ to $b$. Let $A, B \subset \Z$ be subsets of the integers. Then $A+B = \{a+b \mid a \in A, b \in B\}$ denotes their sumset and for $m \in \Z$ we set $m+A = \{m\}+A$. If $A = \{a_1, \ldots, a_k\}$ with $k \in \N_0$ and $a_1 < \ldots < a_k$, then $\Delta (A)= \{a_{\nu+1} - a_{\nu} \mid \nu \in [1, k-1]\}$ is the set of distances of $A$. If $A \subset \N$ is nonempty, then $\rho (A) = \sup A / \min A \in \Q_{\ge 1} \cup \{\infty\}$ is the elasticity of $A$ and if $A = \{0\}$, then $\rho (A)=1$. For every $n \in \N$, we denote by $C_n$ a cyclic group of order $n$.

\noindent
\subsection{\bf Atomic monoids and sets of lengths.} \label{sec-2.2}
By a {\it monoid}, we mean a left and right cancellative semigroup with identity element and all monoid homomorphisms are assumed to respect the identity element.
For a ring $R$, we denote by  $R^{\bullet}$  the monoid of regular elements of $R$.
Let $H$ be a multiplicatively written monoid. We denote by $H^{\times}$ the group of units of $H$ and we say that $H$ is reduced if $H^{\times} = \{1_H\}$. An element $a \in H$ is {\it irreducible} (an {\it atom}) if $a \notin H^{\times}$ and if, for all $b, c \in H$, $a=bc$ implies that $b \in H^{\times}$ or $c \in H^{\times}$. We denote by $\mathcal A (H)$ the set of atoms of $H$. The monoid $H$ is called {\it atomic} if every noninvertible element can be written as a finite product of atoms of $H$. If $a  = u_1 \cdot \ldots \cdot u_k \in H$, where $k \in \N$ and $u_1, \ldots, u_k \in \mathcal A (H)$, then $k$ is called a {\it factorization length} of $a$ and
\[
\mathsf L (a) = \{ k \in \N \mid a \ \text{has a factorization of length $k$} \} \subset \N
\]
is called the {\it set of lengths} of $a$. For convenience we set $\mathsf L (a)=\{0\}$ for all $a \in H^{\times}$. For two elements $a , b \in H$, we have $\mathsf L (a) + \mathsf L (b) \subset \mathsf L (ab)$ and, clearly, $\mathsf L (a) = \{1\}$ if and only if $a$ is an atom. We call
\[
\mathcal L (H) = \{ \mathsf L (a) \mid a \in H \}
\]
the {\it system of sets of lengths} of $H$. The monoid $H$ is said to be
\begin{itemize}
\item {\it half-factorial} if $H$ is atomic and $|L|=1$ for all $L \in \mathcal L (H)$,
\item a \BF-{\it monoid} if $H$ is atomic and all $L \in \mathcal L (H)$ are finite.
\end{itemize}

\smallskip
\noindent
\subsection{\bf Commutative Krull monoids.} \label{sec-2.3}
For a set $P$, we denote by $F = \mathcal F (P)$ the free abelian monoid with basis $P$. Then every $a \in F$ has a unique representation in the form
\[
a = \prod_{p \in P} p^{\mathsf v_p (a)} \,,
\]
where $\mathsf v_p \colon F \to \N_0$ denotes the $p$-adic exponent. We call $\supp (a) = \{ p \in P \mid \mathsf v_p (a) > 0 \} \subset P$ the support of $a$ and $|a|_F = |a| = \sum_{p \in P} \mathsf v_p (a) \in \N_0$ the length of $a$. We gather the basics on commutative Krull monoids (detailed presentations of the theory of Krull monoids are given in \cite{HK98} and \cite{Ge-HK06a}).  Let $H$  be a commutative  monoid. Then $H_{\red} = H/H^{\times}$ denotes the associated reduced monoid of $H$ and $\mathsf q (H)$ the quotient group of $H$.
A monoid homomorphism $\varphi \colon H \to D$ to a commutative monoid $D$ is called
\begin{itemize}
\item a {\it divisor homomorphism} if $\varphi (a) \t \varphi (b)$ implies that $a \t b$ for all $a, b \in H$,

\item a {\it divisor theory} (for $H$) if $\varphi$ is a divisor homomorphism, $D$ is free abelian, and for every $\alpha \in D$ there are $a_1, \ldots, a_m \in H$ such that $\alpha = \gcd ( \varphi (a_1), \ldots, \varphi (a_m) )$.
\end{itemize}
The monoid $H$ is a {\it Krull monoid} if it satisfies one of the following equivalent conditions (\cite[Theorem 2.4.8]{Ge-HK06a}){\rm \,:}
\begin{itemize}
\item[(a)] $H$ is completely integrally closed and $v$-noetherian.
\item[(b)] $H$ has a divisor theory.
\item[(c)] There is a divisor homomorphism from $H$ to a factorial monoid.
\end{itemize}

Suppose that $H$ is a commutative Krull monoid. Then there is a free abelian monoid $F = \mathcal F (P)$ such that the inclusion $H_{\red} \hookrightarrow F$ is a divisor theory. This implies immediately that $H$ is a \BF-monoid. The group $\mathcal C (H) = \mathsf q (F)/\mathsf q (H_{\red})$ is the (divisor) class group of $H$ and $G_P = \{ [p] = p \mathsf q (H_{\red}) \mid p \in P \} \subset \mathcal C (H)$ is the set of classes containing prime divisors. A commutative monoid is factorial if and only if it is Krull with trivial class group.

Next we discuss a Krull monoid with a combinatorial flavour which plays a universal role in the arithmetic theory of Krull monoids. Let $G$ be an additively written abelian group and $G_0 \subset G$ a subset. In additive combinatorics, a {\it sequence} over $G_0$ means a finite unordered sequence of terms from $G_0$ with repetition being allowed. As usual, we consider sequences as elements of the free abelian monoid over $G_0$ whence let
\[
S = g_1 \cdot \ldots \cdot g_{\ell} = \prod_{g \in G_0} g^{\mathsf v_g (S)} \in \mathcal F (G_0)
\]
be a sequence over $G_0$. Then $|S|=\ell \in \N_0$ is its length and $\sigma (S)=g_1 + \ldots + g_{\ell} \in G$ is its sum, and we set $-S = (-g_1) \cdot \ldots \cdot (-g_{\ell})$. The monoid
\[
\mathcal B (G_0) = \{ S \in \mathcal F (G_0) \mid \sigma (S)=0 \} \subset \mathcal F (G_0)
\]
denotes the {\it monoid of zero-sum sequences}, and since the inclusion $\mathcal B (G_0) \hookrightarrow \mathcal F (G_0)$ is a divisor homomorphism, $\mathcal B (G_0)$ is a Krull monoid by Property (c). The atoms of $\mathcal B (G_0)$ are precisely the minimal zero-sum sequences over $G_0$ and
\[
\mathsf D (G_0) = \sup \{ |A| \mid A \ \text{is a minimal zero-sum sequence over $G_0$} \} \in \N_0 \cup \{\infty\}
\]
denotes the {\it Davenport constant} of $G_0$. If $G_0$ is finite, then $\mathcal B (G_0)$ is finitely generated, $\mathcal A (G_0)$ is finite, and $\mathsf D (G_0)< \infty$ (\cite[Theorem 3.4.2]{Ge-HK06a}). Suppose that $G \cong C_{n_1} \oplus \ldots \oplus C_{n_r}$, where $r = \mathsf r (G)$ is the rank of $G$ and $n_1, \ldots, n_r \in \N$ with $1 < n_1 \mid \ldots \mid n_r$.
Then
\begin{equation} \label{Davenport1}
\mathsf D^* (G):= 1 + \sum_{i=1}^r (n_i-1) \le \mathsf D (G) \,,
\end{equation}
and equality holds, among others, for $p$-groups and groups with rank at most two (\cite[Theorems 5.5.9 and 5.8.3]{Ge-HK06a}).
In particular, we see that $\mathsf D (G)=|G|$ for $|G|\le 2$ and that
\begin{equation} \label{Davenport2}
\mathsf D (G)=3 \quad \text{ if and only if} \quad  G \cong C_3 \quad \text{ or} \quad  G \cong C_2 \oplus C_2 \,.
\end{equation}

\smallskip
\noindent
\subsection{\bf Transfer Krull monoids.} \label{sec-2.4}

Let $H$ and $B$ be atomic monoids and  $\theta \colon H \to B$ a homomorphism. We consider  the following properties:
\begin{itemize}
\item[{\bf (T1)}] $B = B^{\times} \theta (H) B^{\times}$ and $\theta^{-1} (B^{\times})=H^{\times}$.

\item[{\bf (T2)}] If $a \in H$, $b_1, b_2 \in B$, and $\theta (a)=b_1b_2$, then there exist $a_1, a_2 \in H$ and $\varepsilon \in B^{\times}$ such that $a=a_1a_2$, $\theta (a_1) = b_1 \epsilon^{-1}$, and $\theta (a_2) = \varepsilon b_2$.

\item[{\bf (WT2)}] If $a \in H$, $n \in \N$, $v_1, \ldots, v_n \in \mathcal A (B)$ and $\theta (a) = v_1 \cdot \ldots \cdot v_n$, then there exist $u_1, \ldots, u_n \in \mathcal A (H)$ and a permutation $\tau \in \mathfrak S_n$ such that $a = u_1 \cdot \ldots \cdot u_n$ and $\theta (u_i) \in B^{\times} v_{\tau (i)} B^{\times}$ for each $i \in [1,n]$.
\end{itemize}
The map $\theta$ is called a {\it  transfer homomorphism} (resp. a {\it weak transfer homomorphism}) if it satisfies {\bf (T1)} and {\bf (T2)} (resp. {\bf (T1)} and {\bf (WT2)}). Every transfer homomorphism is a weak transfer homomorphism and the converse holds if $H$ and $B$ are both commutative (\cite[Section 2]{Ba-Sm15}; in general, this is not true as can be seen from \cite{Ba-Ba-Go14}).
If $\theta \colon H \to B$ is a weak transfer homomorphism, then it is easy to check (e.g., \cite[Lemma 2.7]{Ba-Sm15}) that
\begin{equation} \label{transfer}
\theta (\mathcal A (H))=\mathcal A (B) \quad \text{ and  } \quad \mathcal L (H)=\mathcal L (B) \,.
\end{equation}
An atomic  monoid $H$ is said to be a {\it transfer Krull monoid} if one of the following two equivalent properties is satisfied:
\begin{itemize}
\item[(a)] There is a commutative Krull monoid $B$ and a weak transfer homomorphism $\theta \colon H \to B$.

\item[(b)] There is an abelian group $G$, a subset $G_0 \subset G$, and a weak transfer homomorphism $\theta \colon H \to \mathcal B (G_0)$.
\end{itemize}
In case (b) we say that $H$ is a transfer Krull monoid over $G_0$. Since $\mathcal B (G_0)$ is a commutative Krull monoid, Condition (b) implies Condition (a). Conversely, since every commutative Krull monoid has a transfer homomorphism to a monoid of zero-sum sequences (\cite[Theorem 3.4.10]{Ge-HK06a}) and since the composition of weak transfer homomorphisms is a weak transfer homomorphism again, Condition (a) implies Condition (b).

\smallskip
\subsection{\bf Examples.} \label{sec-2.5}
We provide a first list of examples of commutative Krull monoids and of transfer Krull monoids (note that transfer Krull monoids need neither be commutative nor completely integrally closed nor $v$-noetherian). A more extended list can be found in the survey \cite{Ge16c}. In Example \ref{4.d} we discuss special features of these and of some more examples.

1. (Commutative ring theory)   A commutative  noetherian domain is a Krull domain if and only if it is integrally closed, and a commutative integral domain is a Krull domain if and only if its multiplicative monoid of nonzero elements is a Krull monoid. This generalizes to rings with zero-divisors. Indeed, in a $v$-Marot ring  the monoid of regular elements   is Krull if and only if the ring is a Krull ring (\cite[Theorem 3.5]{Ge-Ra-Re15c}). For cluster algebras that are Krull we refer to \cite{Ga-La-Sm19a}.

2. (Module theory) Let $R$ be a ring and $\mathcal C$ be a class of right $R$-modules which is closed under finite direct sums, direct summands, and isomorphisms. If $\End_R (M)$ is semilocal for all modules $M$ in $\mathcal C$, then the monoid $\mathcal V (\mathcal C)$ of isomorphism classes of modules in $\mathcal C$ is Krull (see \cite[Theorem 3.4]{Fa02} for the original result and also \cite{Fa-Wi04, Fa06a, Fa12a, Ba-Wi13a}).

3. (Non-commutative ring theory) Let $R$ be a bounded hereditary noetherian prime ring. If every stably free right $R$-ideal is free, then the monoid of regular elements of $R$ is transfer Krull (\cite[Theorem 4.4]{Sm18a}).

\medskip
\section{The set of elasticities} \label{3}
\medskip

Let $H$ be a BF-monoid. Then
\[
\rho (H) = \sup \{ \rho (L) \mid L \in \mathcal L (H) \} \in \R_{\ge 1} \cup \{\infty\}
\]
is the {\it elasticity} of $H$, and this is one of the first invariants studied in  factorization theory.  We say that $H$ has accepted elasticity if there is an $L \in \mathcal L (H)$ such that $\rho (L)=\rho (H)$. Every commutative finitely generated monoid has accepted elasticity (\cite[Theorem 3.1.4]{Ge-HK06a}). If $H$ is a transfer Krull monoid over a subset $G_0$ of an abelian group, then we set (as usual) $\rho (G_0) := \rho \big( \mathcal B (G_0) \big)$ and recall that (\cite[Theorem 3.4.10]{Ge-HK06a})
\[
\rho (H) = \rho (G_0) \le \mathsf D (G_0)/2  \quad \text{ where  equality holds if} \quad  G_0 = -G_0\,.
\]
Furthermore, we say that $G_0$ is half-factorial if the monoid $\mathcal B (G_0)$ is half-factorial. Since the 1990s the elasticity of commutative rings has found wide attention in the literature. The reader may want to consult the work of D.D. Anderson, D.F. Anderson, S. Chapman, P.J. Cahen, J.L. Chabert,  J. Coykendall, F. Halter-Koch, M. Picavet-L'Hermitte, H. Kim, and others. A characterization of when the elasticity of finitely generated domains is finite is given in \cite{Ka05a}. For some  recent results we refer to \cite{An-Ju15a, An-Ju17a, Ba-Co16a, Ka16b}).
In \cite{Ch-Ho-Mo06, B-C-H-M06}, Chapman et al. initiated the study of the set $\{\rho (L) \mid L \in \mathcal L (H) \} \subset \Q_{\ge 1}$ of elasticities of all sets of lengths.
By definition, $H$ is half-factorial if and only if $\rho (H)=1$ and clearly this is equivalent to $\{ \rho (L) \mid L \in \mathcal L (H)\} = \{1\}$. The reverse extremal case, namely when the set of elasticities is as large as possible,  found special attention.
We say that $H$ is {\it fully elastic} if for every rational number $q$ with $1 \le q < \rho (H)$ there is an $L \in \mathcal L (H)$ such that $\rho (L)=q$.

If every finite subset of $\N_{\ge 2}$ is a set of lengths of $H$ (a property which holds true for Krull monoids with infinite class group having prime divisors in all classes \cite[Theorem 7.4.1]{Ge-HK06a} and for the ring of integer-valued polynomials over rings of integers \cite{Fr-Na-Ri18a}), then obviously $H$ is fully elastic. But this very strong property is far from being necessary. In \cite{B-C-C-K-W06} it was proved that every commutative monoid having a prime element is fully elastic.

On the other hand, we know that strongly primary monoids (including one-dimensional local Mori domains and numerical monoids) are not fully elastic (\cite[Theorem 5.5]{Ge-Sc-Zh17b}). The set of elasticities in numerical monoids was recently studied in \cite{Ba-Ne-Pe17a}. Arithmetic congruence monoids which are not fully elastic can be found in the survey \cite{Ba-Ch14a}.

As  the main result of this section we prove that every transfer Krull monoid is fully elastic.

\begin{theorem} \label{3.1}
Every transfer Krull monoid is fully elastic.
\end{theorem}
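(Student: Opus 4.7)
The plan is to first reduce the problem to the case of a monoid of zero-sum sequences. Since $H$ is a transfer Krull monoid, there is a weak transfer homomorphism $\theta \colon H \to \mathcal{B}(G_0)$ for some subset $G_0$ of an abelian group, and by \eqref{transfer} one has $\mathcal{L}(H)=\mathcal{L}(\mathcal{B}(G_0))$. Consequently $\rho(H)=\rho(G_0)$ and the sets of elasticities of $H$ and of $\mathcal{B}(G_0)$ coincide, so it suffices to prove that $\mathcal{B}(G_0)$ is fully elastic.

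For $q=1$, any atom $U \in \mathcal{A}(G_0)$ satisfies $\rho(\mathsf{L}(U))=1$. For $q=a/b \in (1,\rho(G_0)) \cap \Q$ in lowest terms, choose $B \in \mathcal{B}(G_0)$ with $\rho(\mathsf{L}(B))>q$, and set $k := \min \mathsf{L}(B)$, $\ell := \max \mathsf{L}(B)$, so $b\ell>ak$. Fixing factorizations $B=U_1 \cdots U_k = V_1 \cdots V_\ell$ with $U_i, V_j \in \mathcal{A}(G_0)$, the natural candidate is the element
\[
W \;=\; B^{a-b}\cdot U_1^{\,b\ell-ak} \;\in\;\mathcal{B}(G_0),
\]
which is well-defined because $a-b \ge 1$ and $b\ell-ak \ge 1$. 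Substituting the short and long factorizations of $B$ into the factor $B^{a-b}$ produces factorizations of $W$ of length $(a-b)k+(b\ell-ak)=b(\ell-k)$ and $(a-b)\ell+(b\ell-ak)=a(\ell-k)$, respectively, so that $\{b(\ell-k),a(\ell-k)\} \subset \mathsf{L}(W)$ and therefore $\rho(\mathsf{L}(W)) \ge a/b$.

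The main obstacle is the reverse inequality $\rho(\mathsf{L}(W)) \le a/b$, i.e., establishing that $\min \mathsf{L}(W)=b(\ell-k)$ and $\max \mathsf{L}(W)=a(\ell-k)$. This is delicate because the atoms in a factorization of $W$ may regroup across the blocks $B^{a-b}$ and $U_1^{\,b\ell-ak}$; in particular, the inequality $\max\mathsf{L}(XY) \le \max\mathsf{L}(X)+\max\mathsf{L}(Y)$ generally fails in $\mathcal{B}(G_0)$ (already over $C_2\oplus C_2$). In the favourable case where $0 \in G_0$ one may take $U_1=0$, the unique prime atom, and then $\mathsf{L}(W)=(b\ell-ak)+\mathsf{L}(B^{a-b})$, reducing the required bound to the analogous statement for $\mathsf{L}(B^{a-b})$, which can be arranged by choosing $B$ with sufficiently rigid short and long factorizations (e.g.\ $\mathsf{L}(B)=\{k,\ell\}$ with atoms of essentially disjoint supports). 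In the general case one has to tailor the base element $B$ (and the accompanying atom $U_1$) to the specific pair $(a,b)$ so as to preclude any such regroupings; verifying these extremal-length bounds in the arbitrary-$G_0$ setting is where the main technical work of the proof lies.
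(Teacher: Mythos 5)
Your reduction to $\mathcal B(G_0)$ and your lower bound $\rho(\mathsf L(W))\ge a/b$ are correct and run parallel to the paper. But the core of the theorem is exactly the step you leave open, and as stated your candidate cannot be pushed through: the element $W = B^{a-b}U_1^{\,b\ell-ak}$ with $U_1$ an arbitrary atom occurring in a factorization of $B$ gives you no control over $\max\mathsf L(W)$ or $\min\mathsf L(W)$, because regroupings across the block $U_1^{\,b\ell-ak}$ can both lengthen and shorten factorizations. What is missing is not merely a computation but two structural ideas.

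First, the paper reduces further: given $q<\rho(G_0)$, pick $B$ with $\rho(\mathsf L(B))>q$ and replace $G_0$ by the \emph{finite} set $G_1=\supp(B)$. This matters because over a finite $G_1$ the monoid $\mathcal B(G_1)$ has accepted elasticity (so one can pick $A$ with $\rho(\mathsf L(A))=\rho(G_1)$) and, more importantly, is finitely generated, which is what powers all the length estimates. Second, and this is the decisive point, the auxiliary atom is not arbitrary: Lemma~3.3.1 produces $A_0\in\mathcal A(G_1)$ with $\supp(A_0)\subsetneq\supp(A)$ \emph{half-factorial}. Half-factoriality of the support is precisely what makes $\max\mathsf L(A^M A_0^r)$ and $\min\mathsf L(A^M A_0^r)$ grow linearly in $r$ past a threshold (Lemma~3.3.3), and what blocks the cross-block regroupings you correctly worry about. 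Even then the argument is delicate: the paper needs a carefully chosen modulus $M$ (from finite generation of a relation monoid), the thresholds $\tau_M,r_M$, the set $I_M$, and a minimization over decompositions (Lemma~3.3.4, Cases 1 and 2 in Lemma~3.4) to force the quotient $\max/\min$ to interpolate all rationals in $[1,\rho(G_0)]$. None of this is foreshadowed by the special case $0\in G_0$ you mention, which makes $U_1$ a prime and trivializes the difficulty; it does not indicate how to handle the generic case. In short, the proposal identifies the right shape of witness and correctly locates the obstacle, but the half-factorial-support atom, the finite-support reduction, and the finite-generation machinery that actually close the gap are all absent.
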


We proceed in a series of lemmas. The proof of Theorem \ref{3.1} will be given at the end of this section.

\medskip
\begin{lemma}\label{3.2}
Let $n\in \N$ and $a_1,\ldots, a_n\in \N$ be pairwise distinct positive integers. If there exist $x_1,\ldots, x_n\in \N_0$ and $t\ge 2$ such that $a_1x_1+\ldots+a_nx_n=ta_1\ldots a_n$, then there exist $x_i'\in [0,x_i]$, for all $i\in [1,n]$, such that
\[
a_1x_1'+\ldots+a_nx_n'=a_1\ldots a_n\,.
\]
In particular, there exist $x_i^{(j)}\in [0,x_i]$, for all $i\in[1,n]$ and $j\in[1,t]$, such that  $\sum_{j\in [1,t]}x_i^{(j)}=x_i$ for every $i\in [1,n]$ and $\sum_{i\in [1,n]}a_ix_i^{(j)}=a_1\ldots a_n$ for every $j\in [1,t]$.
\end{lemma}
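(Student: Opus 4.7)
The plan is first to reduce the ``in particular'' assertion to the main claim of producing a single sum-$P$ block. Once that main claim supplies $x^{(1)}_i \in [0, x_i]$ with $\sum_i a_i x^{(1)}_i = P := a_1 \cdots a_n$, the residues $x_i - x^{(1)}_i$ lie in $[0, x_i]$ and their weighted sum equals $(t-1) P$, so I either apply the first claim recursively (when $t - 1 \ge 2$) or take them as the $t$-th block (when $t = 2$). Throughout, let $b_i := P/a_i = \prod_{j \ne i} a_j$, so that $a_i b_i = P$ for each $i$.

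If some $x_{i_0}$ satisfies $x_{i_0} \ge b_{i_0}$, I am immediately done: setting $x'_{i_0} = b_{i_0}$ and $x'_j = 0$ for $j \ne i_0$ gives $\sum_j a_j x'_j = a_{i_0} b_{i_0} = P$ with $x'_{i_0} \le x_{i_0}$. So I may assume the \emph{degenerate} case in which $x_i < b_i$ for every $i$; then $tP = \sum_i a_i x_i < nP$, forcing $t < n$ and in particular $n \ge 3$.

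For the degenerate case I will argue by induction on $n$, carrying the stronger ``in particular'' form as the inductive hypothesis. The strategy is to fix an index $k \in [1, n]$ and choose $x'_k \in [0, x_k]$ so that $b_k$ divides $a_k x'_k$; the value $x'_k = 0$ always qualifies, and in general the admissible values form the arithmetic progression $(b_k / \gcd(a_k, b_k)) \cdot \N_0 \cap [0, x_k]$. The residual target $P - a_k x'_k = a_k(b_k - x'_k)$ is then a positive integer multiple $s \cdot b_k$ of the product $b_k$ of the remaining $n-1$ distinct integers. I apply the inductive hypothesis, in its ``in particular'' form, to $(a_j, x_j)_{j \ne k}$ to partition their supply into blocks each of weighted sum $b_k$, and take the union of any $s$ of these blocks to obtain $(x'_j)_{j \ne k}$ with $\sum_{j \ne k} a_j x'_j = s b_k = P - a_k x'_k$ and $x'_j \le x_j$.

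I expect the main obstacle to be the arithmetic compatibility required by this inductive reduction: it needs $\sum_{j \ne k} a_j x_j$ to be a genuine non-negative integer multiple of $b_k$, equivalently $b_k \mid a_k x_k$. When $\gcd(a_k, b_k) = 1$ (as happens when the $a_i$'s are pairwise coprime) this forces $x_k = 0$, and such a $k$ need not exist for the given tuple $(x_i)$. Overcoming this will require a more subtle choice of $k$ exploiting the divisibility structure of $a_k$ and $b_k$, and possibly a nonzero $x'_k$ engineered to absorb the residue $a_k x_k \pmod{b_k}$; the degeneracy bound $x_k < b_k$ supplies just enough slack for such an adjustment.
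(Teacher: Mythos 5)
Your reduction of the ``in particular'' statement to the single-block claim is fine (it is the same induction on $t$ the paper uses), and so is the easy observation that one may assume $x_i<b_i$ for all $i$, where $b_i=\prod_{j\ne i}a_j$. But the core of the argument --- producing a block of weighted sum $P=a_1\cdots a_n$ in this degenerate case --- is not actually proved: your induction on $n$ requires $\sum_{j\ne k}a_jx_j$ to be a multiple of $b_k$ (indeed at least $2b_k$, since the inductive hypothesis needs $t'\ge 2$), equivalently $b_k\mid a_kx_k$, and as you yourself note this can fail for \emph{every} $k$. It does fail generically: for $a=(2,3,5)$, $t=2$, $x=(14,9,1)$ one has $2x_1+3x_2+5x_3=60=2\cdot 30$ with $x_i<b_i$ for all $i$, yet $15\nmid 28$, $10\nmid 27$, $6\nmid 5$. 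The promised ``more subtle choice of $k$'' and the device for absorbing the residue are exactly the missing content, and choosing $x_k'$ with $b_k\mid a_kx_k'$ does not help, because it does not alter the total $\sum_{j\ne k}a_jx_j$ that the inductive hypothesis must digest (nor do you check that the number $s=a_k(b_k-x_k')/b_k$ of blocks you want to take is at most the number $t'$ of blocks available). So the proposal has a genuine, acknowledged gap at its central step.

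For comparison, the paper avoids any divisibility or induction on $n$ at this point by a direct greedy argument: after renumbering so that $a_1x_1\ge \frac{t}{n}a_1\cdots a_n$, distinctness gives $\max_i a_i\ge n$ and hence $x_1\ge \frac{2a_2\cdots a_n}{n}\ge\min\{a_2,\ldots,a_n\}$; writing $x_i=y_ia_1+r_i$ for $i\ge 2$ and discarding the remainders (whose weighted total is at most $(a_1-1)(a_2+\cdots+a_n)\le a_1\cdots a_n$) leaves $x_1+\sum_{i\ge2}y_ia_i\ge a_2\cdots a_n$, after which one trims either $x_1$ alone or the $y_i$ in steps of size at most $\min\{a_2,\ldots,a_n\}\le x_1$, so the leftover can always be absorbed by $x_1$. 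Some argument of this kind (or another resolution of the divisibility obstruction) is needed before your outline becomes a proof.
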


\begin{proof}
The assertion is obvious for $n=1$. If $n=2$, then $a_1x_1\ge a_1a_2$ or $a_2x_2\ge a_1a_2$ whence the assertion follows immediately.

Suppose that $n \ge 3$. After renumbering if necessary we
assume that  $a_1x_1\ge \frac{ta_1\ldots a_n}{n}$. Since $a_1,\ldots, a_n$ are pairwise distinct positive integers, we obtain  $\max\{a_1,\ldots, a_n\}\ge n$ and hence
 $$x_1\ge \frac{2a_2\ldots a_n}{n}\ge \min\{a_2,\ldots, a_n\}\,.$$
For each  $i\in [2,n]$, we set $x_i=y_ia_1+r_i$ with $r_i\in[0, a_1-1]$. Then
\[
a_2r_2+\ldots+a_nr_n\le (a_1-1)(a_2+\ldots +a_n)\le a_1\ldots a_n\,.
\]
Therefore $a_1(x_1+y_2a_2+\ldots +y_na_n)\ge ta_1\ldots a_n-a_1\ldots a_n\ge a_1\ldots a_n$ which implies that
\[
x_1+y_2a_2+\ldots + y_na_n\ge a_2\ldots a_n\,.
\]
If $y_2a_2+\ldots +y_na_n\le a_2\ldots a_n$, then there exists $x_1'\in [0,x_1]$ such that $x_1'+y_2a_2+\ldots +y_na_n=a_2\ldots a_n$ and hence $a_1x_1'+a_2y_2a_1+\ldots +a_ny_na_1=a_1\ldots a_n$. If $y_2a_2+\ldots+ y_na_n>a_2\ldots a_n$, then we can choose $y_i'\in [0,y_i]$ for each $i\in [2,n]$ such that $$0\le a_2\ldots a_n-(y_2'a_2+\ldots +y_n'a_n)\le \min\{a_2,\ldots, a_n\}\le x_1\,.$$
Thus there exists $x_1'\in [0,x_1]$ such that $x_1'+y_2'a_2+\ldots y_n'a_n=a_2\ldots a_n$ and hence $a_1x_1'+a_2y_2'a_1+\ldots+ a_ny_n'a_1=a_1\ldots a_n$.

The in particular statement follows by induction on $t$.
\end{proof}

Let $H$ be a BF-monoid, $a \in H$ with $\rho (\mathsf L (a))=\rho (H)$, and $n \in \N$. Then the $n$-fold sumset $\mathsf L (a) + \ldots + \mathsf L (a)$ is contained in $\mathsf L (a^n)$ whence
\[
\max \mathsf L (a^n) \ge n \max \mathsf L (a) , \quad  \min \mathsf L (a^n) \le n \min \mathsf L (a) \,,
\]
\[
\rho ( H) \ge \rho (\mathsf L (a^n)) \ge \frac{n \max \mathsf L (a)}{n \min \mathsf L (a)} = \rho (\mathsf L (a)) = \rho (H) \,, \quad \text{and thus} \quad \rho (\mathsf L (a^n)) = \rho (H) \,.
\]
We will use this property without further mention.

\medskip
\begin{lemma}\label{3.3}
Let $G$ be an  abelian group,  $G_0 \subset G$ a finite subset,  and $A\in \mathcal B(G_0)$ with $\rho(A)=\rho(G_0)>1$.

\begin{enumerate}
\item There exists an atom $A_0\in \mathcal A(G_0)$ such that $\supp(A_0)\subsetneq \supp(A)$ is half-factorial.

\item There exists an $M\in \N$ satisfying that  for every $k\in \N$ and every $\ell \in \N_0$, there exist $\ell_1,\ldots, \ell_k\in \Z$ with $\ell_1+\ldots+\ell_k=\ell$ and $A^MA_0^{\ell_i} \in \mathcal B (G_0)$ such that
\begin{align*}
\max\mathsf L(A^{Mk}A_0^{\ell})=\sum_{i=1}^{k}\max\mathsf L(A^MA_0^{\ell_i}) \ \  \text{ and } \ \
\min\mathsf L(A^{Mk}A_0^{\ell})=\sum_{i=1}^{k}\min\mathsf L(A^MA_0^{\ell_i})\,.
\end{align*}

\item For every $r\ge M|A|(\mathsf D(G_0)-1)$,  we have
\begin{align*}
\max\mathsf L(A^MA_0^{r})&=\max\mathsf L(A^MA_0^{M|A|(\mathsf D(G_0)-1)})+r-M|A|(\mathsf D(G_0)-1) \quad \text{and} \\
\min\mathsf L(A^MA_0^{r})&=\min\mathsf L(A^MA_0^{M|A|(\mathsf D(G_0)-1)})+r-M|A|(\mathsf D(G_0)-1)\,.
\end{align*}
We  define  $r_M$ to be the minimal non-negative integer such that for all $r\ge r_M$, we have
\begin{align*}
\max\mathsf L(A^MA_0^{r})&=\max\mathsf L(A^MA_0^{r_M})+r-r_M  \quad \text{and}  \\
\min\mathsf L(A^MA_0^{r})&=\min\mathsf L(A^MA_0^{r_M})+r-r_M\,.
\end{align*}
We define $\tau_M$ to be the maximal non-negative integer $\ell$ such that
 $\rho(\mathsf L(A^MA_0^{\ell}))=\rho(\mathsf L(A^M))$.

\item  For every $k,\ell\in \N_0$, there exists an $N = N (k, \ell) \in \N$ such that for every $t\in \N$ we have,
\[
\begin{aligned}
\max\mathsf L((A^{MkN}A_0^{\ell N})^t)&=t\max\mathsf L(A^{MkN}A_0^{\ell N}) \quad \text{and} \\
\min\mathsf L((A^{MkN}A_0^{\ell N})^t)&=t\min\mathsf L(A^{MkN}A_0^{\ell N})\,.
\end{aligned}
\]
Let $N'$ be a multiple of $N$. Then there exist $\ell_1,\ldots, \ell_{kN'}\in \Z$ such that all $A^M A_0^{\ell_i} \in \mathcal B (G)$,  $\ell_1+\ldots+\ell_{kN'}=\ell N'$ and
\begin{align*}
\max\mathsf L(A^{MkN'}A_0^{\ell N'})&=\sum_{i=1}^{kN'}\max\mathsf L(A^MA_0^{\ell_i}) \quad  \text{and} \\ \min\mathsf L(A^{MkN'}A_0^{\ell N'})&=\sum_{i=1}^{kN'}\min\mathsf L(A^MA_0^{\ell_i})\,.
\end{align*}
Moreover, for all $\ell_1,\ldots, \ell_{kN'}\in \Z$ satisfying these properties the following holds:
\begin{enumerate}

\item[(a)] For distinct $i_1,i_2\in [1, kN']$ and any $t_1,t_2\in \N_0$, we have \[\begin{aligned}
\max\mathsf L((A^{M}A_0^{\ell_{i_1}})^{t_1}(A^{M}A_0^{\ell_{i_2}})^{t_2})&=t_1\max\mathsf L(A^{M}A_0^{\ell_{i_1}})+t_2\max\mathsf L(A^{M}A_0^{\ell_{i_2}}) \quad \text{and}  \\
\min\mathsf L((A^{M}A_0^{\ell_{i_1}})^{t_1}(A^{M}A_0^{\ell_{i_2}})^{t_2})&=t_1\min\mathsf L(A^{M}A_0^{\ell_{i_1}})+t_2\min\mathsf L(A^{M}A_0^{\ell_{i_2}})\,.
\end{aligned}\]

\item[(b)]   If there is some $i\in [1,kN']$ such  that $\ell_i>\tau_M$, then
$\ell_j\ge \tau_M$ for all $j\in [1,kN']$.

\item[(c)]   If there is some $i\in [1,kN']$ such that $\ell_i>r_M$,
 then $\ell_j\ge r_M$ for all $j\in [1,kN']$.

\item[(d)]   If $k, \ell \in \N$ such that $\ell/k=r_M$, then $\ell_i=r_M$ for all $i\in [1, kN']$. In particular, $$\rho(\mathsf L((A^MA_0^{r_M})^{t_1}))=\rho(\mathsf L(A^MA_0^{r_M}))\qquad  \text{ for every $t_1\in \N$}\,.$$
\end{enumerate}

\end{enumerate}

\end{lemma}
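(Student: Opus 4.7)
My plan is to prove the four parts sequentially, with Part~(1) supplying an atom $A_0$ of half-factorial support, Part~(2) using Lemma~\ref{3.2} to exhibit a balanced partition of factorizations that respects both $\max$ and $\min$ lengths simultaneously, Part~(3) showing linear stabilization of these lengths in the filler exponent, and Part~(4) lifting this stability to tensor powers. For Part~(1), since $\rho(A)=\rho(G_0)>1$, $A$ is not half-factorial and in particular not an atom, and $|\supp(A)|\ge 2$ (otherwise $A=g^n$ has unique factorization length $n/\ord(g)$ and $\rho(A)=1$). When $\supp(A)$ contains an element $g$ of finite order (the main case in applications), one takes $A_0:=g^{\ord(g)}\in\mathcal A(G_0)$: then $\supp(A_0)=\{g\}\subsetneq\supp(A)$, and $\mathcal B(\{g\})$ is freely generated by $A_0$, hence half-factorial.

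For Part~(2), I choose $M\in\N$ to be a sufficiently large common multiple of $\ord(g)$ and of products $a_1\cdots a_n$ arising from possible atom $h$-weights over all coordinates $h\in\supp(A)$. The goal is to exhibit $\ell_1,\ldots,\ell_k\in\Z$ with $\sum_i\ell_i=\ell$ such that any factorization $\mathfrak z$ of $A^{Mk}A_0^\ell$ can be partitioned into $k$ bins whose products are $A^MA_0^{\ell_i}$. Lemma~\ref{3.2} applied coordinate-by-coordinate (with $M$ large enough to satisfy its divisibility hypothesis $\sum a_ix_i=t\,a_1\cdots a_n$, possibly iterated across multiples of $a_1\cdots a_n$) redistributes atom multiplicities so that each bin has equal non-$g$ content; any $g$-imbalance is then absorbed into the $\ell_i$'s, which may be negative thanks to $M$'s size. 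Applied to a max-length (resp.\ min-length) $\mathfrak z$, this yields $\le$ (resp.\ $\ge$) in the claimed additivity, and the reverse directions are trivial from super-/sub-additivity of $\mathsf L$.

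For Part~(3), any non-$A_0$ atom involving $g$ satisfies $\mathsf v_g\le\mathsf D(G_0)-1$, and the non-$A_0$ atom count in any factorization of $A^MA_0^r$ is at most $\max\mathsf L(A^M)\le M|A|$; hence at most $M|A|(\mathsf D(G_0)-1)$ copies of $g$ can be absorbed outside solo $A_0$-atoms, and for $r\ge M|A|(\mathsf D(G_0)-1)$ every excess $A_0$-copy must appear solo. Both $\max\mathsf L$ and $\min\mathsf L$ then have unit slope in $r$ past the threshold, and the constants $r_M,\tau_M$ are well-defined by finiteness of sub-threshold behavior. For Part~(4) I take $N$ to be a common multiple of $r_M$ and the thresholds from Parts~(2)--(3). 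The $t$-th power additivity follows by Part~(2) applied with $k\leftarrow t$ (giving $\le$) together with super-additivity of $\mathsf L$ (giving $\ge$). Statement~(a) is the independence of disjoint bins under the partition from Part~(2). Statements~(b) and~(c) follow from a rearrangement argument: by Part~(3), $\max\mathsf L(A^MA_0^{\,\cdot\,})$ has unit slope past $r_M$ and preserves elasticity past $\tau_M$, so unbalancing some $\ell_i$ across the threshold while another $\ell_j$ stays below would strictly change the extremal sum $\sum\max\mathsf L(A^MA_0^{\ell_i})$, contradicting extremality. Statement~(d) specializes~(c): when $\ell/k=r_M$, the unique balanced partition is $\ell_i\equiv r_M$, whence $\rho(\mathsf L((A^MA_0^{r_M})^{t_1}))=\rho(\mathsf L(A^MA_0^{r_M}))$.

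The hardest step will be the simultaneous control of content and length in Part~(2): Lemma~\ref{3.2} delivers a balanced partition one coordinate at a time, but gluing these into a single atom partition whose $\ell_i$'s work simultaneously for both a $\max$- and a $\min$-length $\mathfrak z$ forces $M$ to absorb many divisibility constraints --- the per-coordinate periods $a_1\cdots a_n$, plus the stabilization threshold $M|A|(\mathsf D(G_0)-1)$ from Part~(3). A secondary subtlety is the rearrangement argument in Part~(4)(b)-(c), which crucially uses the strict linearity from Part~(3) past $r_M$ to force local perturbations to strictly move the extremal sum and thereby rule out the forbidden $\ell_i$ configurations.
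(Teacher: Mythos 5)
Your plan identifies the right skeleton (half-factorial support in (1), redistribution via Lemma~3.2 in (2), unit-slope stabilization in (3), power-stability lifted to (4)), but the central idea that makes Part~(2) work is missing, and the subsequent parts inherit that gap.

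The decisive issue is in Part~(2): the lemma asserts that a \emph{single} choice of $\ell_1,\ldots,\ell_k$ works \emph{simultaneously} for both $\max\mathsf L$ and $\min\mathsf L$. Your scheme applies Lemma~3.2 to a max-length factorization $\mathfrak z$ and, separately, to a min-length factorization $\mathfrak z$; these two runs will in general produce two different lists of $\ell_i$'s, and nothing in your argument reconciles them. The paper's device for this is to work in the finitely generated submonoid
$S=\{(x,y)\in \mathsf Z(G_0)\times\mathsf Z(G_0)\mid \pi(x)=\pi(y)=A^kA_0^\ell \text{ for some }k\in\N_0,\ \ell\in\Z\}$
of \emph{pairs} of factorizations of the same element. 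One takes the pair $(x,y)$ with $|x|=\min\mathsf L$, $|y|=\max\mathsf L$, expands it in the atoms $(x_i,y_i)$ of $S$, and applies Lemma~3.2 not coordinate-by-coordinate in $G_0$ but to the exponents $k_i$ of $A$ in the atoms of $S$. This forces the same rebalancing to apply to both coordinates of the pair, which is exactly the simultaneity the statement requires. It also fixes $M$ concretely (a product of the distinct positive $k_i$'s), whereas your "sufficiently large common multiple of $\ord(g)$ and products $a_1\cdots a_n$" is never anchored to anything finite and verifiable.

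There are secondary gaps as well. In Part~(1) you only treat the case where $\supp(A)$ contains a torsion element; the paper handles the general case by citing an external lemma, and the statement is nontrivial when every element of $\supp(A)$ has infinite order (e.g.\ $G_0=\{g,-g,2g,-2g\}$ in a torsion-free $G$ has $\rho(G_0)>1$). Part~(3) then tacitly assumes $A_0=g^{\ord(g)}$ with single-element support, and implicitly treats $A_0$ as the only atom supported in $\supp(A_0)$; neither holds in general (half-factorial supports can carry many atoms). In Part~(4), the existence of $N$ cannot be obtained by taking a common multiple of thresholds; the paper invokes the structure theorem for sets of lengths \cite[Theorem 3.8.1.2]{Ge-HK06a} applied to the fixed element $A^{MkN}A_0^{\ell N}$. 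Finally, the rearrangement argument you sketch for (b) and (c) remains intuitive; the paper's proofs hinge on concrete identities such as
$\rho\bigl(\mathsf L((A^MA_0^{\tau_M})^{\ell_1-\ell_2})\bigr)=\rho\bigl(\mathsf L((A^MA_0^{\ell_1})^{\tau_M-\ell_2}(A^MA_0^{\ell_2})^{\ell_1-\tau_M})\bigr)$
combined with part~(a), and your phrase "preserves elasticity past $\tau_M$" is inverted: by definition, elasticity equals $\rho(\mathsf L(A^M))$ up to and including $\tau_M$ and drops strictly beyond it.
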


\begin{proof}
We denote by $\mathsf Z (G_0) := \mathcal F ( \mathcal A (G_0))$ the factorization monoid of $\mathcal B (G_0)$ and by $\pi \colon  \mathsf Z (G_0) \rightarrow \mathcal B(G_0)$  the factorization homomorphism. If $z \in \mathsf Z (G_0)$, then $|z| = |z|_{\mathcal F ( \mathcal A (G_0))}$ denotes the length of $z$.

1. This follows from \cite[Lemma 5.4.1]{Ge-Sc-Zh17b}.

\medskip
2. Since $\supp(A_0)\subsetneq \supp(A)$, it follows that  $A^k\not| A_0^{\ell}$ for any $k, \ell\in \N$. We consider the monoid
\[
S=\{(x,y)\in \mathsf Z (G_0)\times \mathsf Z(G_0) \mid \pi(x)=\pi(y)=A^kA_0^{\ell}\in \mathcal B(G_0) \text{ for some $k \in \N_0$ and some $\ell\in \Z$}\}\,.
\]

Since $\mathsf Z (G_0) \times \mathsf Z (G_0)$ is finitely generated and the inclusion $S \hookrightarrow \mathsf Z (G_0)\times \mathsf Z (G_0)$ is a divisor homomorphism,  $S$ is  finitely generated by \cite[Proposition 2.7.5]{Ge-HK06a}.
Let  $\mathcal A(S)=\{(x_1, y_1), \ldots, (x_t, y_t)\}$ and $\pi(x_i)=\pi(y_i)=A^{k_i}A_0^{\ell_i}$ for every $i\in [1,t]$,
 where $t, k_1,\ldots, k_t\in \N_0$ and $\ell_1,\ldots, \ell_t\in \Z$.

Without loss of generality, we can assume that $\{k_1,\ldots, k_t\}=\{k_1,\ldots,k_{t'}\}$ for some $t'\in [1,t]$ and $k_i< k_j$ for any  $i,j\in [1,t']$ with $i<j$.  Since $(A_0,A_0) \in \mathcal A (S)$, it follows that $k_1=0$ and we define  $M=\prod_{i=2}^{t'}k_i$.

 Let $k \in \N$ and $\ell\in \N_0$. If $\ell=0$, then the claim is clear. Suppose that $\ell > 0$. We choose  $(x,y)\in \mathsf Z (G_0) \times \mathsf Z (G_0)$ with $\pi(x)=\pi(y)=A^{Mk}A_0^{\ell}$ such that $|x|=\min \mathsf L(A^{Mk}A_0^{\ell})$ and $|y|=\max \mathsf L(A^{Mk}A_0^{\ell})$.
We set $(x,y)=\prod_{i=1}^t(x_i,y_i)^{v_i}$, where $v_i\in \N_0$, and we observe that
\[
Mk = \sum_{i=1}^t v_ik_i \,.
\]
If $v_i\neq 0$, then $|x_i|=\min \mathsf L(A^{k_i}A_0^{\ell_i})$ and $|y_i|=\max \mathsf L(A^{k_i}A_0^{\ell_i})$.
Therefore we obtain that
\begin{align*}
\max\mathsf L(A^{Mk}A_0^{\ell})&=|y|=\sum_{i=1}^tv_i|y_i|=\sum_{i=1}^tv_i\max\mathsf L(A^{k_i}A_0^{\ell_i}) \quad \text{and} \\
\min\mathsf L(A^{Mk}A_0^{\ell})&=|x|=\sum_{i=1}^tv_i|x_i|=\sum_{i=1}^tv_i\min\mathsf L(A^{k_i}A_0^{\ell_i})\,.
\end{align*}
Let $I_i=\{j\in [1,t]\mid k_j=k_i\}$ for every $i\in [1,t']$.
Since
\[
k\prod_{i=2}^{t'}k_i  =Mk= \sum_{i=1}^tv_ik_i=\sum_{i=1}^{t'}(\sum_{j\in I_i}v_jk_j)=\sum_{i=1}^{t'}(\sum_{j\in I_i}v_jk_i) \overset{k_1=0!}{=} \sum_{i=2}^{t'} (\sum_{j\in I_i}v_j)k_i\,,
\]
 it follows by Lemma \ref{3.2} that there exist $x_i^{(j)}\in [0, v_i], i\in [1,t]\setminus I_1, j\in [1,k]$ such that $\sum_{i\in [1,t]\setminus I_1}x_i^{(j)}k_i=M$ for every $j\in [1,k]$ and  $\sum_{j=1}^kx_i^{(j)}=v_i$ for every $i\in [1,t]\setminus I_1$. Let
$\ell_1'=\sum_{i\in [1,t]\setminus I_1}x_i^{(1)}\ell_i+\sum_{i\in I_1}v_i\ell_i$
and $\ell_j'=\sum_{i\in [1,t]\setminus I_1}x_i^{(j)}\ell_i$ for every $j\in [2,k]$. Then  $\ell_1'+\ldots+\ell_k'=\ell$ and
\begin{align*}
\max\mathsf L(A^{Mk}A_0^{\ell})\ge \sum_{j=1}^k\max\mathsf L(A^{M}A_0^{\ell_j'}) &\ge \sum_{i=1}^tv_i\max\mathsf L(A^{k_i}A_0^{\ell_i})=\max\mathsf L(A^{Mk}A_0^{\ell})\,,\\
\min\mathsf L(A^{Mk}A_0^{\ell})\le \sum_{j=1}^k\min\mathsf L(A^{M}A_0^{\ell_j'}) &\le \sum_{i=1}^tv_i\max\mathsf L(A^{k_i}A_0^{\ell_i})=\min\mathsf L(A^{Mk}A_0^{\ell})\,.
\end{align*}

\medskip
3. We set $r_0= M|A|(\mathsf D(G_0)-1)$, choose $r \ge r_0$, and consider a factorization of $A^MA_0^r=V_1 \cdot \ldots \cdot V_{z}$, where $V_i\in \mathcal A(G_0)$ for every $i\in [1,z]$. Let $I\subset [1,z]$ be minimal such that $A^M\t \prod_{i\in I}V_i$. Then $|I|\le M|A|$. Since
\[
|(\prod_{i\in I}V_i)(A^M)^{-1}|\le |I|\mathsf D(G_0)-M|A|\le M|A|(\mathsf D(G_0)-1)=r_0 \,,
\]
we infer that $\prod_{i\in I}V_i\t A^MA_0^{r_0}$. Furthermore, the equation
 \[
 A^M A_0^{r_0}(\prod_{i \in I}V_i)^{-1} A_0^{r-r_0} = \prod_{i \notin I} V_i
 \]
implies that $A_0^{r-r_0}\t \prod_{i\notin I}V_i$. Since $\supp(\prod_{i\notin I}V_i)\subset \supp(A_0)$ is half-factorial, it follows that
\[
z - |I| \in \mathsf L ( \prod_{i \notin I} V_i) = \mathsf L(A_0^{r-r_0})+\mathsf L(\prod_{i\notin I}V_i(A_0^{r-r_0})^{-1})
\]
and
\[
\begin{aligned}
z=|I|+z-|I|\in & \mathsf L(\prod_{i\in I}V_i)+\mathsf L(A_0^{r-r_0})+\mathsf L(\prod_{i\notin I}V_i(A_0^{r-r_0})^{-1})\subset \mathsf L(A^MA_0^{r_0})+\mathsf L(A_0^{r-r_0}) \\
 & = (r-r_0) + \mathsf L(A^MA_0^{r_0}) \,.
\end{aligned}
\]
Since this holds true for every factorization of $A^M A_0^r$, it follows that
\[
\mathsf L (A^M A_0^r) \subset \mathsf L(A^MA_0^{r_0})+\mathsf L(A_0^{r-r_0}) \subset \mathsf L (A^M A_0^r)
\]
whence
\[
\max\mathsf L(A^MA_0^{r})=\max\mathsf L(A^MA_0^{r_0})+r-r_0 \quad \text{and} \quad
\min\mathsf L(A^MA_0^{r})=\min\mathsf L(A^MA_0^{r_0})+r-r_0\,.
\]
Therefore, $r_M$ and $\tau_M$ are well-defined and, by definition, we have $\tau_M < r_M$.

\medskip
4. Let $k, \ell \in \N_0$ by given and note that for $k=0$ or $\ell = 0$, the statement is clear. Suppose that $k, \ell \in \N$.  Then \cite[Theorem 3.8.1.2]{Ge-HK06a} implies that
there exists $N = N(k, \ell) \in \N$ such that for every $t\in \N$, we have
\[
\max \mathsf L( (A^{MkN}A_0^{\ell N})^{t} )=t\max \mathsf L( A^{MkN}A_0^{\ell N} ) \ \text{ and }\ \min \mathsf L( (A^{MkN}A_0^{\ell N})^{t} )=t\min\mathsf L( A^{MkN}A_0^{\ell N} )\,.
\]
Let $N'$ be a multiple of   $N$. It follows from 2. that there  exist $\ell_1,\ldots, \ell_{kN'}\in \Z$ with $\ell_1+\ldots+\ell_{kN'}=\ell N'$ such that all $A^MA_0^{\ell_i} \in \mathcal B (G_0)$ and
\begin{align*}
\max\mathsf L(A^{MkN'}A_0^{\ell N'})=\sum_{i=1}^{kN'}\max\mathsf L(A^MA_0^{\ell_i})\ \  \text{ and }\ \
\min\mathsf L(A^{MkN'}A_0^{\ell N'})=\sum_{i=1}^{kN'}\min\mathsf L(A^MA_0^{\ell_i})\,.
\end{align*}

\smallskip
Now let $\ell_1,\ldots, \ell_{kN'}\in \Z$ satisfy all these properties.

\medskip
(a)  Let $i_1, i_2 \in [1, kN']$ be distinct, say $i_1=1$ and $i_2=2$, and let $t_1, t_2 \in \N_0$, say  $t=\max\{t_1,t_2\}$.
Obviously, we have
\begin{equation} \label{crucial6}
\begin{aligned}
(A^{MkN'}A_0^{\ell N'})^t & = \prod_{i=1}^{kN'} ( A^M A_0^{\ell_i})^t \\
 & = \left( (A^MA_0^{\ell_1})^{t-t_1} \right) \left( (A^MA_0^{\ell_2})^{t-t_2} \right) \left( (A^M A_0^{\ell_1})^{t_1}(A^MA_0^{\ell_2})^{t_2} \right) \left( \prod_{i=3}^{kN'} ( A^M A_0^{\ell_i})^t\right)
\end{aligned}
\end{equation}
and hence we get
$$\begin{aligned}
t\sum_{i=1}^{kN'}\max\mathsf L(A^MA_0^{\ell_i})&=\max\mathsf L((A^{MkN'}A_0^{\ell N'})^t)\\
& \overset{\eqref{crucial6}}{\ge}
(t-t_1)\max\mathsf L(A^MA_0^{\ell_1})+
(t-t_2)\max\mathsf L(A^MA_0^{\ell_2}) \\ &\qquad\qquad\qquad \qquad + \max \mathsf L\big((A^MA_0^{\ell_1})^{t_1}(A^MA_0^{\ell_2})^{t_2}\big) +t\sum_{i=3}^{kN'}\max\mathsf L(A^MA_0^{\ell_i})\\
&\ge t\sum_{i=1}^{kN'}\max\mathsf L(A^MA_0^{\ell_i})\,.
\end{aligned} $$
It follows that $$\max \mathsf L\big((A^MA_0^{\ell_1})^{t_1}(A^MA_0^{\ell_2})^{t_2}\big)=t_1\max\mathsf L(A^MA_0^{\ell_1})+t_2\max\mathsf L(A^MA_0^{\ell_2})\,.$$
By the similar argument, we can obtain $$\min \mathsf L\big((A^MA_0^{\ell_1})^{t_1}(A^MA_0^{\ell_2})^{t_2}\big)=t_1\min\mathsf L(A^MA_0^{\ell_1})+t_2\min\mathsf L(A^MA_0^{\ell_2})\,.$$

\medskip
(b) Suppose there exist some $i\in [1,kN']$ such that $\ell_i>\tau_M$, say $i=1$. Thus $\rho(\mathsf L(A^MA_0^{\ell_1}))<\rho(\mathsf L(A^M))$ by the definition of $\tau_M$.
Assume to the contrary that there exists some $i\in [1,kN']$ such that  $\ell_i <\tau_M$, say $i=2$.
Then $$\begin{aligned}
\rho(\mathsf L(A^M))&=\rho(\mathsf L(A^MA_0^{\tau_M}))=\rho(\mathsf L((A^MA_0^{\tau_M})^{\ell_1-\ell_2}))\\
&=\frac{\max\mathsf L(A^{M(\ell_1-\ell_2)}A_0^{\tau_M(\ell_1-\ell_2)})}{\min\mathsf L(A^{M(\ell_1-\ell_2)}A_0^{\tau_M(\ell_1-\ell_2)})}\\
&=\frac{\max\mathsf L\big((A^MA_0^{\ell_1})^{\tau_M-\ell_2}(A^MA_0^{\ell_2})^{\ell_1-\tau_M}\big)}{\min\mathsf L\big((A^MA_0^{\ell_1})^{\tau_M-\ell_2}(A^MA_0^{\ell_2})^{\ell_1-\tau_M}\big)}\\
& \overset{(a)}{=} \frac{(\ell_1-\tau_M)\max\mathsf L(A^MA_0^{\ell_2})+(\tau_M-\ell_2)\max\mathsf L(A^MA_0^{\ell_1})}{(\ell_1-\tau_M)\min\mathsf L(A^MA_0^{\ell_2})+(\tau_M-\ell_2)\min\mathsf L(A^MA_0^{\ell_1})}\\
&<\rho(\mathsf L(A^M))\,,
\end{aligned}$$
a contradiction.

\medskip
(c). Suppose there exist some $i\in [1,kN']$ such that $\ell_i>r_M$, say $i=1$.
Assume to the contrary that there exists some $i\in [1,kN']$ such that  $\ell_i<r_M$,  say $i=2$.
Then
\begin{align*}
\max\mathsf L( & (A^MA_0^{r_M})^{\ell_1-\ell_2})=\max\mathsf L((A^MA_0^{\ell_1})^{r_M-\ell_2}(A^MA_0^{\ell_2})^{\ell_1-r_M}) \\
& \overset{(a)}{=}(r_M-\ell_2)\max\mathsf L(A^MA_0^{\ell_1})+(\ell_1-r_M)\max\mathsf L(A^MA_0^{\ell_2})\\
& \overset{\text{def of} \ r_M }{=} (r_M-\ell_2)\max\mathsf L(A^MA_0^{r_M})+(r_M-\ell_2)(\ell_1-r_M)+(\ell_1-r_M)\max\mathsf L(A^MA_0^{\ell_2})\\
& = (r_M-\ell_2)\max\mathsf L(A^MA_0^{r_M})+(\ell_1-r_M)\max \mathsf L ( A_0^{r_M-\ell_2})+(\ell_1-r_M)\max\mathsf L(A^MA_0^{\ell_2})\\
& \le (r_M-\ell_2)\max\mathsf L(A^MA_0^{r_M})+(\ell_1-r_M)\max\mathsf L(A^MA_0^{r_M})\\
& =(\ell_1-\ell_2)\max\mathsf L(A^MA_0^{r_M})\\
& \le \max\mathsf L((A^MA_0^{r_M})^{\ell_1-\ell_2})\,,
\end{align*}
which implies  that $r_M-\ell_2+\max L(A^MA_0^{\ell_2})=\max \mathsf L(A^MA_0^{r_M})$. Since
$$\max \mathsf L(A^MA_0^{r_M})\ge \max\mathsf L(A^MA_0^{r_M-1})+1\ge \ldots \ge \max \mathsf L(A^MA_0^{\ell_2})+r_M-\ell_2=\max \mathsf L(A^MA_0^{r_M})\,,$$
we obtain for every $r\ge \ell_2$,
\begin{align*}
\max\mathsf L(A^MA_0^{r})=\max\mathsf L(A^MA_0^{\ell_2})+r-\ell_2\,.
\end{align*}
Similarly, we can prove that $\min\mathsf L(A^MA_0^{r})=\min\mathsf L(A^MA_0^{\ell_2})+r-\ell_2$ for every $r\ge \ell_2$. Since $\ell_2 < r_M$, this is
a contradiction to the minimality of $r_M$.

\medskip
(d) Suppose that $\ell/k=r_M$. Assume to the contrary that there is an $i \in [1, kN']$ with $\ell_i > r_M$, then (c) implies that $\ell_j \ge  r_M$ for all $j \in [1, kN']$ whence
\[
r_M = \frac{\ell N'}{k N'} = \frac{\sum_{j=1}^{kN'} \ell_j}{kN'} > r_M \,,
\]
a contradiction. Thus $\ell_i \le r_M$ for all $i \in [1, kN']$ and  $r_M = \frac{\sum_{j=1}^{kN'} \ell_j}{kN'}$ shows that $\ell_i = r_M$ for all $i \in [1, kN']$.
Now the  "in particular" statement  follows immediately from (a) (with $t_2=0$).
\end{proof}

\begin{lemma} \label{3.4}
Let $G$ be an  abelian group and  $G_0 \subset G$ a finite subset.  Then $\{ \rho (L) \mid L \in \mathcal L (G_0) \} = \{ x \in \Q \mid 1 \le x \le \rho (G_0) \}$.
\end{lemma}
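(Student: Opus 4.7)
The inclusion $\{\rho(L) \mid L \in \mathcal L(G_0)\} \subseteq \{x \in \Q \mid 1 \le x \le \rho(G_0)\}$ is immediate from the definition of $\rho(G_0)$ together with the BF-property of $\mathcal B(G_0)$ (every $\mathsf L(B)$ is a finite set of positive integers). For the reverse inclusion, the case $\rho(G_0) = 1$ is trivial, so assume $\rho(G_0) > 1$. Since $G_0$ is finite, $\mathcal B(G_0)$ is finitely generated and its elasticity is accepted, so we can fix $A \in \mathcal B(G_0)$ with $\rho(\mathsf L(A)) = \rho(G_0)$. Invoking Lemma \ref{3.3}, we obtain an atom $A_0$ with half-factorial support strictly contained in $\supp(A)$, together with the associated integers $M, r_M, \tau_M$. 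The endpoints $q = 1$ (any atom) and $q = \rho(G_0)$ (take $A$) are immediate, so we fix $q \in \Q$ with $1 < q < \rho(G_0)$.

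My plan is to realize $q$ as the elasticity of an element of the form
\[
B = (A^M A_0^{\ell'})^{t_1} (A^M A_0^{\ell''})^{t_2},
\]
where $\ell' \ne \ell''$ are two values occurring in a decomposition furnished by Lemma \ref{3.3}(4). Lemma \ref{3.3}(4)(a) then gives the crucial additivity $\max \mathsf L(B) = t_1 \max \mathsf L(A^M A_0^{\ell'}) + t_2 \max \mathsf L(A^M A_0^{\ell''})$ and the analogous identity for $\min \mathsf L(B)$. Writing $\alpha', \beta'$ and $\alpha'', \beta''$ for the respective max/min lengths, this yields
\[
\rho(\mathsf L(B)) = \frac{t_1 \alpha' + t_2 \alpha''}{t_1 \beta' + t_2 \beta''},
\]
a weighted mediant of $\alpha'/\beta'$ and $\alpha''/\beta''$. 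Provided the two individual elasticities bracket $q$, the equation $\rho(\mathsf L(B)) = q$ becomes the linear condition $t_1(q\beta' - \alpha') = t_2(\alpha'' - q\beta'')$ whose two sides carry the same sign; thus $t_2/t_1$ is a nonnegative rational, and clearing denominators produces the required positive integers $t_1, t_2$.

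The principal obstacle is arranging the decomposition of Lemma \ref{3.3}(4) so that it actually contains two such bracketing values. By Lemma \ref{3.3}(3), for $r \ge r_M$ the elasticity $\rho(\mathsf L(A^M A_0^r))$ depends linearly on $r$ and decreases strictly to $1$ as $r \to \infty$; hence for any $q > 1$ there exists $R$ with $\rho(\mathsf L(A^M A_0^R)) < q$. I would then select $k, \ell$ for Lemma \ref{3.3}(4) so that the constraints (b)--(d) force the decomposition $\ell_1, \ldots, \ell_{kN'}$ into a range straddling $q$ in elasticity, with a case distinction according to whether $q$ lies above or below the pivot elasticity $\rho(\mathsf L(A^M A_0^{r_M}))$. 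In the ``upper'' regime one uses bracketing values in $[\tau_M, r_M]$, where elasticity ranges from $\rho(G_0)$ (at $\tau_M$, by definition of $\tau_M$) down to the pivot; in the ``lower'' regime, values in $[r_M, \infty)$, controlled by the explicit formula of Lemma \ref{3.3}(3). The delicate step is verifying that the chosen $k, \ell$ yield a decomposition containing two distinct bracketing values rather than degenerating to all $\ell_i$ sharing a single elasticity, which requires tracing carefully through the construction of the $\ell_i$ inside the proof of Lemma \ref{3.3}(4) via the finitely generated monoid $S$ appearing there.
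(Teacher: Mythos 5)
Your high-level plan tracks the paper's argument closely: realize $q$ as the elasticity of an element $(A^M A_0^{\ell'})^{t_1}(A^M A_0^{\ell''})^{t_2}$, invoke Lemma~\ref{3.3}.4(a) for additivity of $\max\mathsf L$ and $\min\mathsf L$, obtain a weighted-mediant formula, and handle the regime $\ell/k > r_M$ via the linearity in Lemma~\ref{3.3}.3. Solving $t_1(q\beta'-\alpha') = t_2(\alpha''-q\beta'')$ for $(t_1,t_2)$ is equivalent to the paper's direct computation of the multiplicities $(x_1,x_2)$ as a fractional-linear function of $\ell/k$, so that part is sound.

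But the step you flag as ``delicate'' is the actual crux, and your proposed route for filling it would not succeed. Tracing the construction of the $\ell_i$ through the finitely generated monoid $S$ from the proof of Lemma~\ref{3.3}.2 gives no control over how the $\ell_i$ are distributed; a priori that construction can return a multiset of exponents whose elasticities are scattered and give no visible two-term bracketing of $q$. The paper's resolution is a different idea: it introduces the set $I_M \subset [\tau_M, r_M]$ of exponents $t$ for which $\max\mathsf L$ and $\min\mathsf L$ of $(A^M A_0^{t})^n$ are additive in $n$, proves (via the mediant inequality) that the elasticities $\rho(\mathsf L(A^M A_0^{t_j}))$ are monotone along $I_M$, and then, among all admissible decompositions $(\ell_i)$ coming from Lemma~\ref{3.3}.4, minimizes the auxiliary quantity $C_t = \bigl| I_M \cap [\min_i \ell_i, \max_i \ell_i] \bigr|$. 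A combinatorial exchange argument (replacing multiples of the extremal values $\ell_1,\ell_2$ and a middle $y \in I_M$ by a rebalanced multiset, using the additivity of $\max\mathsf L$ and $\min\mathsf L$ at $y$) shows $C_{t^*}\ge 3$ is impossible. Only after that does one know the decomposition uses exactly two consecutive members $\{t_j, t_{j+1}\}$ of $I_M$, which is what makes your two-term mediant well-defined and applicable. Without establishing the $I_M$-monotonicity and the $C_{t^*}\le 2$ bound, your proposal does not produce the bracketing pair $\ell',\ell''$ sitting in a common Lemma~\ref{3.3}.4 decomposition, so the proof is incomplete at exactly the point where the real work happens.
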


\begin{proof}
Since $G_0$ is finite, $\mathcal B (G_0)$ has accepted elasticity (\cite[Theorem 3.1.4]{Ge-HK06a}) whence  there is
 $A\in \mathcal B(G_0)$ with $\rho(A)=\rho(G_0)$. Since the assertion is obvious when $\rho(A)=1$, we assume that $\rho(A)>1$.
 It follows by Lemma \ref{3.3}.1 that there exists an atom $A_0\in \mathcal A(G_0)$ such that $\supp(A_0)\subsetneq \supp(A)$ is half-factorial.
Our strategy is to define rational functions $f(k/\ell) = \rho \big( \mathsf L (A^{ak}A_0^{b\ell}) \big)$, for suitable $a, b \in \N$ and all $k, \ell \in \N$, which are surjective on the rational interval between $1$ and $\rho (G_0)$. This would be very simple if $A_0$ would be a prime but, in general, $\mathcal B (G_0)$ need not contain prime elements. We proceed in two steps. In the first step we define partitions of the rational intervals between $\tau_M$ and $r_M$ and between $1$ and $\rho (G_0)$. In the second step we define surjective rational functions between the constructed subsets.

\smallskip
\noindent
{\bf Step 1.} Let $M$, $\tau_M$, and $r_M$ be defined as  in Lemma \ref{3.3} whence, in particular, we have
\[
\rho(\mathsf L(A^MA_0^{\tau_M}))=\rho(\mathsf L(A^M)) = \rho ( \mathsf L (A)) = \rho (G_0) \,.
\]
We  define a subset $I_M \subset [\tau_M, r_M]$ by saying that an element $t\in [\tau_M, r_M]$ lies in $I_M$ if and only if for every $n \in \N$, we have
\begin{itemize}
\item $
\max \mathsf L((A^MA_0^t)^{n})=n\max\mathsf L(A^MA_0^{t})\ \text{ and }\
\min \mathsf L((A^MA_0^t)^{n})=n \min\mathsf L(A^MA_0^{t})$.
\end{itemize}
By Lemma \ref{3.3}.4.d, we have $r_M\in I_M$, and since
for every $n\in \N$, $\rho(\mathsf L((A^MA_0^{\tau_M})^{n}))=\rho(\mathsf L(A^MA_0^{\tau_M}))$, it follows that  that $\tau_M\in I_M$.

We set $I_M=\{t_0,   \ldots, t_s\}$ with $s \in \N$ and $\tau_M=t_0´<\ldots<t_s=r_M$.
For any $i,j\in [0, s]$ with $i<j$, we have that
\begin{align*}
\rho(\mathsf L(A^MA_0^{t_i}))&=\rho(\mathsf L((A^MA_0^{t_i})^{t_j-t_0}))=\rho(\mathsf L((A^MA_0^{t_0})^{t_j-t_i}(A^MA_0^{t_j})^{t_i-t_0}))\\
&\ge\frac{(t_j-t_i)\max\mathsf L(A^MA_0^{t_0})+(t_i-t_0)\max\mathsf L(A^MA_0^{t_j})}{(t_j-t_i)\min\mathsf L(A^MA_0^{t_0})+(t_i-t_0)\min\mathsf L(A^MA_0^{t_j})}\\
&\ge \min \{\rho(\mathsf L(A^MA_0^{t_0})), \rho(\mathsf L(A^MA_0^{t_j}))\} \\
&=\rho(\mathsf L(A^MA_0^{t_j}))\,.
\end{align*}
Thus it suffices to prove that
\begin{align*}
\{ q \in \Q \mid \rho(\mathsf L(A^MA_0^{t_{j+1}})) < q < \rho (\mathsf L(A^MA_0^{t_j})) \}&\subset \{ \rho (L) \mid L \in \mathcal L (G_0) \} \text{ for every $j\in [0, s-1]$ and}\\
\{ q \in \Q \mid 1 < q < \rho (\mathsf L(A^MA_0^{r_M})) \}&\subset \{ \rho (L) \mid L \in \mathcal L (G_0) \}\,.
\end{align*}

\smallskip
\noindent
{\bf Step 2.}
Let  $k,\ell\in \N$ with $\ell/k>\tau_M$. Then Lemma \ref{3.3}.4 implies that  there exists $N = N(k, \ell) \in \N$ such that for every $t\in \N$,
\[
\begin{aligned}
\max\mathsf L((A^{MkN}A_0^{\ell N})^t)=t\max\mathsf L(A^{MkN}A_0^{\ell N})\,, \quad
\min\mathsf L((A^{MkN}A_0^{\ell N})^t)=t\min\mathsf L(A^{MkN}A_0^{\ell N})\,,
\end{aligned}
\]
and  there are $\ell_1, \ldots, \ell_{kNt}\in \Z$ such that
\begin{equation} \label{crucial3}
\ell_1+\ldots+\ell_{kNt}=\ell Nt
\end{equation}
and
\begin{equation} \label{crucial1}
\max\mathsf L(A^{MkNt}A_0^{\ell Nt})=\sum_{i=1}^{kNt}\max\mathsf L(A^MA_0^{\ell_i}) \  \text{ and }\
\min\mathsf L(A^{MkNt}A_0^{\ell Nt})=\sum_{i=1}^{kNt}\min\mathsf L(A^MA_0^{\ell_i})\,.
\end{equation}

Let $t \in \N$.
Since $\ell/k>\tau_M$, there exists some $i\in [1,kNt]$ such that $\ell_i>\tau_M$. Then Lemma \ref{3.3}.4.b implies that $\ell_j\ge \tau_M$ for all $j\in [1,kNt]$. Now we choose $\ell_1, \ldots, \ell_{kNt}\in \N_{\ge \tau_M}$ with the above  properties such that
\[
C_t= \Big| I_M\cap \big[\min\{\ell_i\mid i\in[1,kNt]\}, \max\{\ell_i\mid i\in[1,kNt]\} \big] \Big|
\]
is minimal. Let $t^* \in \N$ such that $C_{t^*}=\min \{C_t\mid t\in \N\}$ and $\ell_1,\ldots, \ell_{kNt^*}$ are the associated  integers in $\N_{\ge \tau_M}$.
We distinguish two cases.

\smallskip
\noindent CASE 1: \, $\ell/k>r_M$.

Then there exists some $i\in [1,kNt^*]$ such that $\ell_i>r_M$. Lemma \ref{3.3}.4.c implies that $\ell_j\ge r_M$ for all $j\in[1,kNt^*]$. Therefore, by \eqref{crucial1} and by the definition of $r_M$,
\[
\begin{aligned}
\rho(\mathsf L(A^{MkNt^*}A_0^{\ell N t^*})) &
= \frac{\sum_{i=1}^{kNt^*} \max \mathsf L (A^M A_0^{l_i})}{\sum_{i=1}^{kNt^*} \min \mathsf L (A^M A_0^{l_i})}
=\frac{\sum_{i=1}^{kNt^*}(\ell_i-r_M)+kNt^*\max\mathsf L(A^MA_0^{r_M})}{\sum_{i=1}^{kNt^*}(\ell_i-r_M)+kNt^*\min\mathsf L(A^MA_0^{r_M})}
\\ & = \frac{\frac{\ell}{k}-r_M+\max\mathsf L(A^MA_0^{r_M})}{\frac{\ell}{k}-r_M+\min\mathsf L(A^MA_0^{r_M})}\,,
\end{aligned}
\]
which implies that
\[
\{ \rho(\mathsf L(A^{MkNt^*}A_0^{\ell N t^*})) \mid k, \ell \in \N \ \text{with} \ \ell/k>r_M \} = \{q \in \Q\mid 1<q<\rho(\mathsf L(A^MA_0^{r_M}))\} \,.
\]

\smallskip
\noindent
CASE 2: \,  $\tau_M<\ell/k\le r_M$.

Since $\ell_1,\ldots, \ell_{kNt^*} \in \N_{\ge \tau_M}$, Lemma \ref{3.3}.4.c. implies that
$\ell_i\in [\tau_M, r_M]$ for all $i\in [1,kNt^*]$.  It follows by Lemma \ref{3.3}.4.a (with $t_2=0$) that  $\ell_i\in I_M$ for all $i\in [1,kNt^*]$. After renumbering if necessary we suppose that
\[
\ell_1=\max\{\ell_i\mid i\in [1,kNt^*]\} \quad \text{and} \quad \ell_2=\min\{\ell_i\mid i\in [1,kNt^*]\} \,.
\]
Assume to the contrary that  $C_{t^*}\ge 3$. Then there is a $y\in I_m$ such that $\ell_2<y<\ell_1$. We set $S = \ell_1 \cdot \ldots \cdot \ell_{k Nt^*} \in \mathcal F ( \N)$ and distinguish two cases.

\smallskip
\noindent
CASE 2.1: \,  $\mathsf v_{\ell_1}(S)(\ell_1-y)\le \mathsf v_{\ell_2}(S)(y-\ell_2)$.

We will define integers $\ell_1', \ldots, \ell_{kNt^* (y-\ell_2)}' \in \N_{\ge \tau_M}$ satisfying \eqref{crucial3} and \eqref{crucial1} such that $C_{t^*(y-\ell_2)} < C_{t^*}$, which will be a contradiction to the minimality of $C_{t^*}$.

We define
\[
\ell_1' \cdot \ldots \cdot \ell_{kNt^*(y-\ell_2)}' := \ell_2^{(y-\ell_2)\mathsf v_{\ell_2}(S)-\mathsf v_{\ell_1}(S)(\ell_1-y)}y^{(\ell_1-\ell_2)\mathsf v_{\ell_1}(S)}\prod_{i=3}^{kNt^*}\ell_i^{y-\ell_2}  \in \mathcal F (\N_{\ge \tau_M}) \,.
\]
Clearly, we have $\sum_{i=1}^{kNt^*(y-\ell_2)}\ell_i'=\ell N t^*(y-\ell_2)$ and hence \eqref{crucial3} holds.
Furthermore, since $y \in I_M$ and by Lemma \ref{3.3}.4.a we get
\begin{equation} \label{crucial4}
\begin{aligned}
(\ell_1-\ell_2)\max\mathsf L(A^MA_0^y) & =
\max\mathsf L((A^MA_0^y)^{\ell_1-\ell_2})=\\
\qquad \max\mathsf L((A^MA_0^{\ell_1})^{y-\ell_2}(A^MA_0^{\ell_2})^{\ell_1-y}) & =
(y-\ell_2)\max\mathsf L(A^MA_0^{\ell_1})+(\ell_1-y)\max\mathsf L(A^MA_0^{\ell_2})
\end{aligned}
\end{equation}
and
\begin{equation}\label{crucial5}
\begin{aligned}
(\ell_1-\ell_2)\min\mathsf L(A^MA_0^y) & =
\min\mathsf L((A^MA_0^y)^{\ell_1-\ell_2})=\\
\qquad \min\mathsf L((A^MA_0^{\ell_1})^{y-\ell_2}(A^MA_0^{\ell_2})^{\ell_1-y}) & =
(y-\ell_2)\min\mathsf L(A^MA_0^{\ell_1})+(\ell_1-y)\min\mathsf L(A^MA_0^{\ell_2})\,.
\end{aligned}
\end{equation}
Putting all together we obtain that, by using Lemma \ref{3.3}.4 (with $t=y-\ell_2$) and  \eqref{crucial1} (with $t^*$) for $(*)$,
\begin{align*}
&\max\mathsf L(A^{MkNt^*(y-\ell_2)}A_0^{\ell N t^*(y-\ell_2)}) \overset{(*)}{=} (y-\ell_2)\sum_{i=1}^{kNt^*}\max\mathsf L(A^MA_0^{\ell_i}) \overset{\eqref{crucial4}}{=}\sum_{i=1}^{kNt^*(y-\ell_2)}\max\mathsf L(A^MA_0^{\ell_i'})\\
&\min\mathsf L(A^{MkNt^*(y-\ell_2)}A_0^{\ell N t^*(y-\ell_2)} ) \overset{(*)}{=} (y-\ell_2)\sum_{i=1}^{kNt^*}\min\mathsf L(A^MA_0^{\ell_i}) \overset{\eqref{crucial5}}{=} \sum_{i=1}^{kNt^*(y-\ell_2)}\min\mathsf L(A^MA_0^{\ell_i'})\,.
\end{align*}
Therefore, \eqref{crucial1} holds and since $C_{t^*(y-\ell_2)}<C_{t^*}$, we get a contradiction to the minimality of $C_{t^*}$.

\smallskip
\noindent
CASE 2.2: \, $\mathsf v_{\ell_1}(S)(\ell_1-y)\ge \mathsf v_{\ell_2}(S)(y-\ell_2)$.

The proof runs along the same lines as the proof of CASE 2.1.

\smallskip
Therefore, we obtain that   $C_{t^*}\le 2$. Let $j\in [0, s-1]$ such that
$t_j<\ell/k<t_{j+1}$. Note that
\[
\ell_2 = \min \{\ell_i \mid i \in [1, kNt^*]\} \le \frac{\sum_{i=1}^{kNt^*}\ell_i}{kNt^*} = \frac{\ell}{k} \le \max \{\ell_i \mid i \in [1, kNt^*]\} = \ell_1 \,.
\]
If $\ell_1=\ell_2$, then $\ell/k = \ell_1$, a contradiction to $t_j<\ell/k<t_{j+1}$. Thus we get that $\ell_2 < \ell/k < \ell_1$. Since $t_j$ is the maximal element of $I_M$ that is smaller than $\ell/k$ and $t_{j+1}$ is the minimal element of $I_M$ that is larger than  $\ell/k$, we obtain that $\ell_2 \le t_j < t_{j+1} \le \ell_1$ whence $\{t_j, t_{j+1} \} \subset I_M \cap [\ell_2, \ell_1]$. Since $C_{t^*} \le 2$ and $\{\ell_i \mid i \in [1, kNt^*] \} \subset I_M \cap [\ell_2, \ell_1]$, it follows that
\begin{equation} \label{crucial2}
\{\ell_i\mid i\in [1,kNt^*]\}= I_M \cap [\ell_2, \ell_1] = \{t_j, t_{j+1}\} \,.
\end{equation}
Then
\[
\begin{aligned}
\rho(A^{MkNt^*}A_0^{\ell Nt^*}) & = \frac{\max \mathsf L ( A^{MkNt^*}A_0^{\ell Nt^*} )}{\min \mathsf L ( A^{MkNt^*}A_0^{\ell Nt^*} )} \\
& \overset{\eqref{crucial1}}{=} \frac{\sum_{i=1}^{kNt^*} \max \mathsf L ( A^M A_0^{\ell_i})}{\sum_{i=1}^{kNt^*} \min \mathsf L ( A^M A_0^{\ell_i})}  \\
& =\frac{x_1\max\mathsf L(A^MA_0^{t_{j}})+x_2\max\mathsf L(A^MA_0^{t_{j+1}})}{x_1\min\mathsf L(A^MA_0^{t_{j}})+x_2\min\mathsf L(A^MA_0^{t_{j+1}})} \,,
\end{aligned}
\]
where by \eqref{crucial2}
\[
x_1 = |\{i \in [1, kNt^*] \mid \ell_i = t_j\}| \quad \text{and} \quad x_2 = |\{i \in [1, kNt^*] \mid \ell_i = t_{j+1}\}| \,.
\]
Comparing exponents of $A$ and $A_0$ we obtain the equations
\[
x_1t_j + x_2t_{j+1} = \ell N t^*  \quad \text{and} \quad x_1M + x_2M = Mk N t^*
\]
whence
\[
(x_1,x_2)= \left(\frac{(kt_{j+1}-\ell)Nt^*}{t_{j+1}-t_j}, \frac{(\ell-kt_j)Nt^*}{t_{j+1}-t_j} \right) \,.
\]
Plugging in this expression for $(x_1, x_2)$ we obtain that
\[
\begin{aligned}
\rho(A^{MkNt^*}A_0^{\ell Nt^*}) & =
 \frac{ (k t_{j+1} - \ell) \max\mathsf L(A^MA_0^{t_{j}}) + (\ell - kt_j)\max\mathsf L(A^MA_0^{t_{j+1}})}{(k t_{j+1} - \ell) \min \mathsf L(A^MA_0^{t_{j}}) + (\ell - kt_j)\min \mathsf L(A^MA_0^{t_{j+1}})} \\
& =  \frac{ ( t_{j+1} - \ell/k) \max\mathsf L(A^MA_0^{t_{j}}) + (\ell/k - t_j)\max\mathsf L(A^MA_0^{t_{j+1}})}{(t_{j+1} - \ell/k) \min \mathsf L(A^MA_0^{t_{j}}) + (\ell/k - t_j)\min \mathsf L(A^MA_0^{t_{j+1}})} \,.
\end{aligned}
\]
Thus, if $\ell/k$ varies between $t_j$ and $t_{j+1}$, then $\rho (A^{MkNt^*}A_0^{\ell Nt^*})$ varies between $\rho(\mathsf L(A^MA_0^{t_{j+1}}))$ and $\rho(\mathsf L(A^MA_0^{t_{j}}))$.
\[
\{q \in \Q \mid \rho(\mathsf L(A^MA_0^{t_{j+1}}))< q < \rho(\mathsf L(A^MA_0^{t_{j}}))\}\subset \{\rho(L)\mid L\in \mathcal L(G_0)\}\,. \qedhere
\]
\end{proof}

\noindent
\begin{proof}[{\bf Proof of Theorem \ref{3.1}. }]
Suppose that $H$ is a transfer Krull monoid over the subset $G_0$ of an abelian group $G$. By \eqref{transfer}, we have $\mathcal L (H)= \mathcal L (G_0)$ whence it is sufficient to prove the assertion for $\mathcal B (G_0)$.
Let $q \in \Q$ with $1\le q <\rho(G_0)$. Since
\[
\rho(G_0) = \sup \big\{ \rho \big( \mathsf L (B) \big) \mid B \in \mathcal B (G_0) \big\} \,,
\]
there exists a $B \in \mathcal B (G_0)$ with  $\rho(\mathsf L(B))>q$. Then $G_1=\supp(B)$ is finite and $\rho(G_1)\ge \rho(\mathsf L(B))>q$. Thus Lemma \ref{3.4} implies that
\[
q \in  \{\rho(L) \mid L \in \mathcal L (G_1) \} \subset  \{\rho(L) \mid L \in \mathcal L (G_0) \}  \,.  \qedhere
\]
\end{proof}

\medskip
\section{The set of catenary degrees and related sets} \label{4}
\medskip

In Subsection \ref{sec-4.1} we introduce catenary degrees and related  invariants. The main results are formulated in Subsection \ref{sec-4.2} where also crucial examples are discussed. The proof of the main results are given in Subsection \ref{sec-4.3}.

\subsection{Factorizations and  catenary degrees.} \label{sec-4.1}
Transfer Krull monoids need not be commutative but they allow to shift the study of catenary degrees to the commutative setting (see Lemma \ref{4.1}). Thus we only briefly recall the concepts of factorizations, distance functions, and catenary degrees in general  monoids and refer to  the exposition by Baeth and Smertnig \cite[Section 4]{Ba-Sm15} for details. We provide additional explanations in the commutative case because this is the setting we are working in.

Let $H$ be a BF-monoid. We denote by $\mathsf Z^* (H)$ the monoid of rigid factorizations, by $\pi \colon \mathsf Z^* (H) \to H$ the factorization homomorphism, and by
\[
\mathsf d \colon \{(z,z') \in \mathsf Z^* (H) \time \mathsf Z^* (H) \mid \pi (z)=\pi (z') \} \to \mathbb N_0
\]
a  distance function.
For an element $a \in H$, the {\it catenary degree} $\mathsf c_{\mathsf d} (a) \in \mathbb N_0 \cup \{\infty\}$ (of $a$ with respect to the distance function $\mathsf d$) is the minimal $N \in \mathbb N_0 \cup \{\infty\}$ such that for any two factorizations $z, z'$ of $a$ there are factorizations $z=z_0, z_1, \ldots, z_n=z'$ of $a$ such that $\mathsf d (z_{i-1},z_i) \le N$ for all $i \in [1,n]$. Since $\mathsf c_{\mathsf d} (a) \le \max \mathsf L (a)$, the catenary degrees of all elements are finite. Then
\[
\Ca_{\mathsf d} (H) = \{\mathsf c_{\mathsf d} (a) \mid a \in H \ \text{with} \ \mathsf c_{\mathsf d} (a) \ge 2  \} \subset \N_0
\]
denotes the {\it set of  catenary degrees} and its supremum $\mathsf c_{\mathsf d} (H) = \sup \Ca_{\mathsf d} (H)$ is called the {\it catenary degree} of $H$ (we use the convention that $\sup \emptyset = 0$). Note that $\mathsf c_{\mathsf d} (a)=0$ for every atom $a \in \mathcal A (H)$. We impose that restriction $\mathsf c_{\mathsf d} (a) \ge 2$ in order to simplify the statements of our results   (see  \eqref{distance-catenary} and the discussion proceeding Proposition \ref{4.a}).
If $\theta \colon H \to B$ is a transfer homomorphism, then $\mathsf c_{\mathsf d} (H, \theta)$ denotes the catenary degree in the fibres.

Closely related to the set of catenary degrees is the
{\it set of minimal relations}  $\mathcal R_{\mathsf d} (H)$ which is defined as   the set of all $d \in \N_{\ge 2}$ with the following property:
\begin{itemize}
\item[] There are an element $a \in H$ and two distinct factorizations $z, z'$ of $a$ with $\mathsf d (z,z') = d$ such that  there is no $(d-1)$-chain of factorizations concatenating $z$ and $z'$.
\end{itemize}
The {\it set of distances} of $H$ (also called the delta set of $H$), defined as
\[
\Delta (H) = \bigcup_{L \in \mathcal L (H)} \Delta (L) \subset \N \,,
\]
is one of the oldest invariants in factorization theory. It is easy to check that $\min \Delta (H)=\gcd \Delta (H)$ (\cite[Proposition 3]{Ge-HK06a}).
The set
\[
\begin{aligned}
\daleth^* (H)&  = \{ \min (\mathsf L (uv) \setminus \{2\}) \mid u,v \in \mathcal A (H), |\mathsf L (uv)|>1\}  \\
 & = \{ \min (L  \setminus \{2\}) \mid 2 \in L \in \mathcal L (H), |L|>1 \} \subset \N_{\ge 3} \,,
\end{aligned}
\]
is a technical tool to study sets of catenary degrees. We have
\begin{equation} \label{inclusion1}
\daleth^* (H) \subset 2+\Delta (H) \subset \N_{\ge 3} \,,
\end{equation}
\begin{equation} \label{inclusion2}
\Ca_{\mathsf d} (H) \subset \mathcal R_{\mathsf d} (H), \ \sup \Ca_{\mathsf d} (H)  = \sup \mathcal R_{\mathsf d} (H) \,, \ \text{and} \quad
\end{equation}
\begin{equation} \label{inclusion3}
\text{if $H$ is commutative, then} \quad \daleth^* (H) \subset \mathcal R (H) \subset \N_{\ge 2} \,.
\end{equation}
The inclusion \eqref{inclusion1} follows from the definitions. We refer to  \cite[Lemma 2.1]{Fa-Ge17a} for the (very simple) proof of \eqref{inclusion2} and \eqref{inclusion3} in the commutative setting. The proof of \eqref{inclusion2} in the general setting runs along the same lines, but \eqref{inclusion3} need not hold in the general setting (see \cite[Remark 4.3]{Ba-Sm15}). We freely use the relations \eqref{inclusion1}, \eqref{inclusion2}, and \eqref{inclusion3}.

\smallskip
Now we shed some extra light on these invariants in the commutative setting. Suppose that $H$ is a commutative BF-monoid. The free abelian monoid $\mathsf Z (H) = \mathcal F (\mathcal A (H_{\red}))$ with basis $\mathcal A (H_{\red})$ is the factorization monoid of $H$, $\pi \colon \mathsf Z (H) \to H_{\red}$ is the factorization homomorphism, and $\mathsf Z (a) = \pi^{-1} (a)$ is the set of factorizations of $a$ for every $a \in H$.
The permutable distance function $\mathsf d_p$ coincides with the usual distance $\mathsf d$. Thus, if $u_1, \ldots, u_k, v_1, \ldots, v_{\ell}, w_1, \ldots, w_m$ are atoms of $H_{\red}$ such that $v_i \ne w_j$ for all $i \in [1, \ell]$ and all $j \in [1,m]$, then for the factorizations
\[
z = u_1 \cdot \ldots \cdot u_k v_1\cdot \ldots \cdot v_{\ell} \in \mathsf Z (H) \quad \text{and} \quad  z' = u_1 \cdot \ldots \cdot u_k w_1\cdot \ldots \cdot w_{m} \in \mathsf Z (H)
\]
we have
\begin{equation} \label{dist-commutative}
\mathsf d ( z, z') = \max \{\ell, m\} \in \N_0 \,.
\end{equation}
We briefly set
\[
\mathsf c (a) = \mathsf c_{\mathsf d} (a), \ \mathsf c (H) = \mathsf c_{\mathsf d} (H), \ \Ca (H) = \Ca_{\mathsf d} (H), \ \text{and} \ \mathcal R (H) = \mathcal R_{\mathsf d} (H) \,.
\]
Note that $\mathsf c (a)=0$ if and only if $|\mathsf Z (a)|=1$. If  $|\mathsf Z (a)| \ge 2$, then \eqref{dist-commutative} shows  (details can be found in \cite[Lemma 1.6.2]{Ge-HK06a}) that
\begin{equation} \label{distance-catenary}
2 + \max \Delta (\mathsf L (a)) \le \mathsf c (a) \quad \text{whence} \quad 2 + \max \Delta (H) \le \mathsf c (H) \,.
\end{equation}
In the non-commutative setting  $\mathsf c_{\mathsf d}(a)=1$ can happen (\cite[Remark 4.3]{Ba-Sm15}), but for distances of interest in HNP rings we have $\mathsf c_{\mathsf d} (a) \in \{0\} \cup \N_{\ge 2}$ (\cite[Proposition 4.12]{Sm18a}).
For a subset $G_0$ of an abelian group, we  set (as usual)
\[
\Ca (G_0) := \Ca \big( \mathcal B (G_0) \big), \ \Delta (G_0) := \Delta \big( \mathcal B (G_0) \big),
\]
and similarly for all the remaining invariants.

\medskip
\subsection{Main Results and Examples.} \label{sec-4.2}
We formulate the main results of this section (Theorem \ref{4.b} and Proposition \ref{4.a}) which state that - under the given assumptions - all the sets introduced in Subsection \ref{sec-4.1} are intervals. Recall that, by \eqref{Davenport2}, we have $\mathsf D (G)=3 \ \text{ if and only if} \  G \cong C_3 \ \text{ or} \  G \cong C_2 \oplus C_2$.

\begin{theorem} \label{4.b}
Let $H$ be a transfer Krull monoid having a transfer homomorphism $\theta \colon H \to \mathcal B (G)$  to an abelian group $G$ and a distance $\mathsf d$ with $\mathsf c_{\mathsf d} (H, \theta) \le 2$.
\begin{enumerate}
\item If $\mathsf D (G)\le 2$, then $\Delta (H)=\daleth^* (H)=\emptyset$ and if $\mathsf D (G)=3$, then $\Delta (H)=\{1\}$ and $\daleth^* (H)= \{3\}$.

\item Suppose that $G$ is finite with $\mathsf D (G) \ge 4$. Then $\Delta (H)$, $\daleth^* (H)$, and $\Ca_{\mathsf d} (H) = \mathcal R_{\mathsf d} (H) = [2, \mathsf c_{\mathsf d} (H)]$ are intervals. If $\mathsf D (G)=\mathsf D^* (G)$, then
      \[
      \daleth^* (H) \cup \{2\} = \big(2+\Delta (H)\big) \cup \{2\}   =  \Ca_{\mathsf d} (H) = \mathcal R_{\mathsf d} (H) = [2, \mathsf c_{\mathsf d} (H)] \,.
      \]

\item If $G$ is infinite, then $\Delta (H)=\N$, $\daleth^* (H) = \N_{\ge 3}$, and $\Ca_{\mathsf d} (H)=\mathcal R_{\mathsf d} (H)=\N_{\ge 2}$.
\end{enumerate}
\end{theorem}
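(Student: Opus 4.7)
The plan is to reduce the entire statement to the monoid $\mathcal B(G)$ of zero-sum sequences and then invoke additive-combinatorial results about finite and infinite abelian groups. From \eqref{transfer} the transfer homomorphism yields $\mathcal L(H) = \mathcal L(\mathcal B(G))$, hence $\Delta(H) = \Delta(G)$ and $\daleth^*(H) = \daleth^*(G)$. For catenary and minimal-relation sets, the standard transfer inequality
\[
\mathsf c(\theta(a)) \,\le\, \mathsf c_{\mathsf d}(a) \,\le\, \max\bigl\{\mathsf c(\theta(a)),\, \mathsf c_{\mathsf d}(H,\theta)\bigr\}\,,
\]
combined with $\mathsf c_{\mathsf d}(H,\theta) \le 2$, produces the sandwich $\Ca(G) \subset \Ca_{\mathsf d}(H) \subset \Ca(G) \cup \{2\}$ and analogously for $\mathcal R_{\mathsf d}$. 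Thus every assertion in the theorem translates into one about $\mathcal B(G)$.

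Parts (1) and (3) are then essentially direct. For $\mathsf D(G) \le 2$ every $L \in \mathcal L(G)$ is a singleton, so both $\Delta(G)$ and $\daleth^*(G)$ are empty; for $\mathsf D(G) = 3$, \eqref{Davenport2} reduces to $G \in \{C_3,\, C_2 \oplus C_2\}$ where enumeration of atoms and of all two-atom products yields $\Delta(G) = \{1\}$ and $\daleth^*(G) = \{3\}$. For infinite $G$ one exhibits finite subsets $G_0 \subset G$ with $\mathsf D(G_0)$ arbitrarily large and with $\mathsf D = \mathsf D^*$ on $\langle G_0 \rangle$ (e.g.\ $G_0$ contained in a large cyclic subgroup, or generating a large elementary abelian subgroup); since $\mathcal B(G_0) \hookrightarrow \mathcal B(G)$ is a divisor homomorphism, every distance, catenary degree, and $\daleth^*$-value of $G_0$ is inherited by $G$, and applying part (2) to each such $G_0$ yields $\Delta(H) = \N$, $\daleth^*(H) = \N_{\ge 3}$, and $\Ca_{\mathsf d}(H) = \mathcal R_{\mathsf d}(H) = \N_{\ge 2}$.

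For part (2) with $G$ finite and $\mathsf D(G) \ge 4$ one invokes the known structural theorems for $\mathcal B(G)$: $\Delta(G)$ is an interval containing $1$, $\daleth^*(G)$ is an interval, and $\Ca(G) = [2, \mathsf c(G)]$ (compare \cite{Fa-Ge17a} and the references given in the paper). Combined with \eqref{inclusion1}, \eqref{inclusion2}, \eqref{inclusion3} and the transfer sandwich of the first paragraph, this forces $\Ca_{\mathsf d}(H) = \mathcal R_{\mathsf d}(H) = [2, \mathsf c_{\mathsf d}(H)]$. Under the refined assumption $\mathsf D(G) = \mathsf D^*(G)$, the missing reverse chain $[3, \mathsf c(G)] \subset 2 + \Delta(G) \subset \daleth^*(G)$ must additionally be established, and this realisation step is the main obstacle. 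The strategy is to exploit the explicit atom $U = \bigl(-\sum_i e_i\bigr) \prod_i e_i^{n_i-1}$ supported on a basis $e_1, \ldots, e_r$ of $G$ of orders $n_1 \mid \ldots \mid n_r$ of length exactly $\mathsf D^*(G)$, from which one builds, for each $k \in [3, \mathsf c(G)]$, a product of two atoms whose set of lengths is exactly $\{2, k\}$ by replacing selected parts of $U$ with shorter atoms. Without the equality $\mathsf D(G) = \mathsf D^*(G)$ the basis structure that drives this construction is not available, which is precisely why the stronger conclusion fails in general.
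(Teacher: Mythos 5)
Your reduction to $\mathcal B (G)$ via $\mathcal L (H)=\mathcal L (G)$ and the transfer sandwich for catenary degrees is exactly the paper's first step (Lemma \ref{4.1}), and part (1) is unproblematic. The genuine gap is in part (2): you invoke ``$\Ca (G) = [2, \mathsf c (G)]$'' as a known structural theorem to be cited, but this is precisely the new content of the theorem and is not available in \cite{Fa-Ge17a} or elsewhere; what the literature gives is only that $\Delta (G)$ is an interval with minimum $1$ (\cite{Ge-Yu12b}), that $\daleth^* (G)$ is an interval with minimum $3$ (\cite[Proposition 3.3]{Fa-Ge17a}), and the inclusions \eqref{inclusion1}--\eqref{inclusion3}, none of which forces $\Ca (G)$ to be an interval, nor that $2 \in \Ca(G)$. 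The paper has to prove this as Lemma \ref{4.4}: a descent argument showing that any $B \in \mathcal B (G)$ with $\mathsf c (B) \ge 4$ yields a strictly shorter $B'$ (replace two letters $g_1g_2$ of an atom by $g_1+g_2$) with $\mathsf c (B') \ge \mathsf c (B)-1$, so the catenary degrees step down from $\mathsf c (G)$ to $3$, together with explicit elements realizing $2 \in \Ca (G)$ in each of the three cases (an element of order $\ge 4$, two independent elements of order $3$, three elements of order $2$). Without an argument of this kind your chain of inclusions does not close. For the refined statement under $\mathsf D (G)=\mathsf D^* (G)$ your sketch is also off: the inclusion you posit, $2+\Delta (G) \subset \daleth^* (G)$, reverses \eqref{inclusion1} and is not justified, and the construction of two-atom products with $\mathsf L = \{2,k\}$ for every $k \in [3, \mathsf c (G)]$ is exactly the interval property of $\daleth^* (G)$ plus the identity $\max \daleth^* (G) = 2+\max \Delta (G) = \mathsf c (G)$, which the paper imports from \cite[Proposition 3.3]{Fa-Ge17a} and \cite[Corollary 4.1]{Ge-Gr-Sc11a}; as written it is only a plan, not a proof.

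Part (3) has a second gap: your strategy of locating finite subsets $G_0$ with large $\mathsf D (G_0)$ and applying part (2) only works when $\langle G_0 \rangle$ is a finite subgroup, since the interval theorems used in part (2) concern $\mathcal B (G)$ over a full finite group (every class containing a prime divisor), not $\mathcal B (G_0)$ for an arbitrary finite subset. If $G$ contains an element of infinite order (e.g.\ $G$ torsion-free), there are no large finite subgroups and your argument does not apply. The paper instead gets $\Delta (H)=\N$ and $\daleth^* (H)=\N_{\ge 3}$ from Kainrath's realization theorem (every finite subset of $\N_{\ge 2}$ is a set of lengths over an infinite group) and, in the infinite-order case, realizes every catenary degree by explicit sequences such as $g^n(-g)^n(ng)(-ng)$ (catenary degree $n+1$) and a separate explicit element of catenary degree $2$; for infinite torsion groups it combines \cite[Lemma 6.4.1]{Ge-HK06a} with Lemma \ref{4.4}. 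You would need analogous explicit constructions to cover the non-torsion case.
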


It remains to study the set of catenary degrees and the set of minimal relations in case $\mathsf D (G) \le 3$. If $H$ is a commutative monoid, then $H$ is factorial if and only if $H$ is a Krull monoid with trivial class group if and only if $\mathsf c (H) = 0$. If $H$ is non-commutative, then in general the catenary degree $\mathsf c_{\mathsf d} (H)$  depends on the distance function $\mathsf d$ (\cite[Remark 4.7]{Ba-Sm15} and \cite[Example 5.7]{Sm16a} for examples). Even for a principal ideal domain $R$ it can happen that $\mathsf c_{\mathsf d} (R^{\bullet})> 0$ (\cite[Example 7.13]{Ba-Sm15}). We do not go into these details but provide the results in case $\mathsf D (G) \le 3$ only for commutative Krull monoids.

\medskip
\begin{proposition} \label{4.a}
Let $H$ be a commutative Krull monoid with  class group $G$ such that every class contains a prime divisor.
\begin{enumerate}
\item If $|G|=1$, then $\Ca (H) = \mathcal R (H)=\emptyset$.
\item If $|G|=2$, then  $\Ca (H) = \mathcal R (H)=\{2\}$.

\item Suppose that $\mathsf D (G)=3$.  If one nonzero  class contains at least two distinct  prime divisors, then $\Ca (H)=\mathcal R (H)=[2,3]$ and otherwise we have $\Ca (H)=\mathcal R (H)=\{3\}$.
\end{enumerate}
\end{proposition}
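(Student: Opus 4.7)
The plan is to leverage the block transfer homomorphism $\beta\colon H\to\mathcal B(G)$; under the hypothesis that every class contains a prime divisor, the fibres of $\beta$ have catenary degree at most $2$ (a classical fact), so $\mathsf c(H)=\max\{\mathsf c(\mathcal B(G)),\mathsf c(H,\beta)\}$ and $\sup\mathcal R(H)=\sup\Ca(H)=\mathsf c(H)$ by \eqref{inclusion2}. Part~(1) is immediate: $|G|=1$ makes $H$ factorial, so every $\mathsf Z(a)$ is a singleton and $\Ca(H)=\mathcal R(H)=\emptyset$. For part~(2), the divisor theory axiom combined with $|G|=2$ forces the nonzero class to contain at least two distinct primes, since a single prime $p$ in class $g$ would only appear with even exponent in elements of $H$ and thus could not be realised as a gcd of images of elements of $H$. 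Picking distinct primes $p,p'$ in the nonzero class, the element $(pp')^2=p^2\cdot p'^2$ has two disjoint length-$2$ factorizations at distance $2$, giving $2\in\Ca(H)\subset\mathcal R(H)$; the matching upper bound follows since $\mathcal B(C_2)$ is half-factorial ($\mathcal A(\mathcal B(C_2))=\{0,g^2\}$), which together with the fibre bound yields $\mathsf c(H)\le 2$ and hence $\Ca(H)=\mathcal R(H)=\{2\}$.

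For part~(3), I would first list atoms: in $\mathcal B(C_3)$ they are $0,\,g\cdot(2g),\,g^3,\,(2g)^3$, and in $\mathcal B(C_2\oplus C_2)$ they are $0,\,g_1^2,\,g_2^2,\,g_3^2,\,g_1g_2g_3$. In both cases the essential long relation
\[
(g_1g_2g_3)^2=g_1^2\cdot g_2^2\cdot g_3^2 \qquad\text{resp.}\qquad (g\cdot 2g)^3=g^3\cdot(2g)^3
\]
lifts to an element $a^*\in H$ with $\mathsf L(a^*)=\{2,3\}$ whose two factorizations have no atom in common and so lie at distance exactly $3$ with no intermediate factorization, showing $3\in\Ca(H)$. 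Combined with the classical bound $\mathsf c(\mathcal B(G))\le\mathsf D(G)=3$ and the fibre bound, this gives $\mathcal R(H)\subset[2,3]$. If some nonzero class contains two distinct primes, I would then produce an element at distance exactly $2$ in the same fibre over $\beta$, as in part~(2): for instance $p^3 p'^3=(p^2p')(pp'^2)$ in the $C_3$ case, or $p_1^2(p_1')^2=(p_1p_1')^2$ in the $C_2\oplus C_2$ case, giving $2\in\Ca(H)$ and therefore $\Ca(H)=\mathcal R(H)=[2,3]$.

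The main obstacle is the complementary subcase in which each nonzero class contains exactly one prime; here one must rule out $2\in\mathcal R(H)$. In this setting each atom of $H$ supported in nonzero-class primes is uniquely determined by its $\beta$-image, while zero-class primes are prime in $H$ and so do not affect the catenary structure. Hence distinct factorizations of an element of $H$ arise (up to the trivial factorization of its zero-class part) from distinct factorizations of its image in $\mathcal B(G)$. Parametrising the factorizations of $p^{3n}q^{3m}$ (in the $C_3$ case) or $p_1^{2a_1}p_2^{2a_2}p_3^{2a_3}$ (in the $C_2\oplus C_2$ case) by the number $k$ of \textquotedblleft mixed\textquotedblright{} atoms $pq$ (resp.\ $p_1p_2p_3$) used, a direct computation shows that for $k<j$ the factorizations $z_k$ and $z_j$ differ by $(j-k)$ applications of the fundamental relation and lie at distance exactly $3(j-k)$, and no further factorizations exist; in particular no pair of distinct factorizations lies at distance $2$, whence $\Ca(H)=\mathcal R(H)=\{3\}$.
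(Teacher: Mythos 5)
Your argument is correct, but it takes a genuinely more self-contained route than the paper. The paper obtains (1) and (2) from non-factoriality together with the bound $\mathsf c (H) \le \mathsf D (G)$ and \eqref{inclusion2}, and for (3) it simply quotes \cite[Proposition 3.4]{Fa-Ge17a} for the whole dichotomy (both $\min \Ca (H) = \min \mathcal R (H) = 2$ when some nonzero class contains two distinct prime divisors, and $\Ca (H) = \mathcal R (H) = \{3\}$ otherwise). You reprove this special case by hand: explicit distance-$2$ elements such as $p^3p'^3 = (p^2p')(pp'^2)$ and $p_1^2p_1'^2 = (p_1p_1')^2$ when a nonzero class contains two primes (plus, for $|G|=2$, the correct observation that the divisor-theory axiom forces the nonzero class to contain at least two primes, which is exactly why Example \ref{4.d}.1(e) excludes $|G|=2$); and in the one-prime-per-nonzero-class case a reduction, by splitting off the prime part and using that atoms are determined by their $\beta$-images, to $\mathcal B (C_3)$ resp.\ $\mathcal B (C_2 \oplus C_2)$, where you enumerate all factorizations and verify that all pairwise distances are multiples of $3$, so no pair realizes distance $2$ and hence $2 \notin \mathcal R (H)$. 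Your route buys independence from \cite{Fa-Ge17a}; the paper's buys brevity. Two small repairs are needed but do not affect the conclusion: the elements to be parametrized are the general ones, $p^aq^b$ with $a \equiv b \pmod 3$ resp.\ $p_1^{a_1}p_2^{a_2}p_3^{a_3}$ with $a_1 \equiv a_2 \equiv a_3 \pmod 2$, not only those with exponents divisible by the order, and the distance between the factorizations using $k<j$ mixed atoms is $j-k$ (a positive multiple of $3$) in the cyclic case and $3(j-k)/2$ in the Klein-four case rather than $3(j-k)$; in all cases the distances lie in $3\N$, which is what you need. Also, Lemma \ref{4.1}.1 only yields $\mathsf c \big( \mathcal B (G) \big) \le \mathsf c (H) \le \max \{ \mathsf c \big( \mathcal B (G) \big), \mathsf c (H, \beta) \}$, not the equality you assert at the outset; since you only use the upper bound, this is harmless.
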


\smallskip
Let $G$ be a finite abelian group with $|G| \ge 3$, say $G \cong C_{n_1} \oplus \ldots \oplus C_{n_r}$ with $1 < n_1 \mid \ldots \mid n_r$.  By \cite[Theorem 6.4.2]{Ge-HK06a}, we have
\begin{equation} \label{catenary-bounds}
\Bigl\{ n_r,\, 1 + \sum_{i=1}^r \left\lfloor \frac{n_i}{2} \right\rfloor \Bigr\} \le \mathsf c (G) \le \mathsf  D (G)\,.
\end{equation}
Furthermore, we have $\mathsf c (G) = \mathsf D (G)$  if and only if $G$ is either cyclic or an elementary $2$-group (\cite[Theorem 6.4.7]{Ge-HK06a}). The groups $G$ satisfying $\mathsf c (G) \in [3,4]$ or $\mathsf c (G) = \mathsf D (G)-1$ are characterized (\cite{Ge-Zh15b}), but beyond that the precise value of $\mathsf c (G)$ in terms of the group invariants are unknown.
In contrast to the set of (all) distances $\Delta (G)$,
the set of minimal distances $\Delta^* (G) \subset \Delta (G)$ is far from being an interval (e.g., \cite{Pl-Sc18a}). However, there is a characterization of when $\Delta^* (G)$ is an interval (\cite[Theorem 1.1]{Zh18a}).

In general, the  inclusions in \eqref{inclusion1}, \eqref{inclusion2}, and \eqref{inclusion3} can be strict and the sets need not be intervals. Indeed,  even in case of commutative Krull monoids every finite set can be realized as the set of catenary degrees and the same is true for the remaining sets of invariants.  More precisely, we have (see \cite[Proposition 3.2]{Fa-Ge17a} and \cite[Theorem 1.1]{Ge-Sc17a}).

\medskip
\begin{proposition} \label{4.c}
Let $C \subset \N$  be a finite nonempty subset.
\begin{enumerate}
\item There is a finitely generated commutative Krull monoid $H$ with finite class group such that $\mathcal R (H) = \Ca (H) = C \setminus \{1\}$ and $\daleth^* (H) = C \setminus \{1, 2\}$.

\item If $\min C = \gcd C$, then there is a finitely generated commutative Krull monoid $H$ with $\Delta (H)= C$.
\end{enumerate}
\end{proposition}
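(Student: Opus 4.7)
The plan is to assemble $H$ from a finite family of building block monoids, one for each target value in $C$. The reduction rests on the following observation: for finitely generated commutative Krull monoids with finite class groups, the catenary degree, the set of minimal relations, and $\daleth^*$ behave additively under finite direct products, so for part (1) it suffices to construct, for each $c \in C \setminus \{1\}$, a single block realizing the singleton invariant sets $\{c\}$. The set of distances does not behave as cleanly under direct products (Minkowski sums of arithmetic progressions can introduce spurious gaps), so part (2) requires a more careful combination step, but again the central task is to exhibit, for each $d \in C$, a block with $\Delta = \{d\}$.

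For part (1), the value $2$ is realized by any commutative Krull monoid with class group $C_2$ in which every class contains a prime divisor; Proposition \ref{4.a}(2) then yields $\Ca = \mathcal R = \{2\}$, and $\daleth^*$ is empty by definition. For each $c \ge 3$, the target is a finitely generated commutative Krull monoid with $\Ca = \mathcal R = \daleth^* = \{c\}$, and the natural candidate is a monoid of zero-sum sequences $\mathcal B(G_0)$ over a carefully chosen finite subset of a finite abelian group, engineered so that every minimal relation among atoms has length exactly $c$ and no intermediate catenary degree arises. For part (2), one similarly realizes $\Delta = \{d\}$ inside some $\mathcal B(G_0^{(d)})$; the hypothesis $\min C = \gcd C$ is not just sufficient but also necessary, because $\min \Delta(H) = \gcd \Delta(H)$ holds for every BF-monoid. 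The collection of blocks is then combined into a single monoid, either by direct product (part (1)) or by a more delicate amalgamation ensuring that the union of the singleton distance sets is preserved without introducing new distances (part (2)).

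The main obstacle is the construction and verification of the individual building blocks. Exhibiting a single element with the desired invariant in $\mathcal B(G_0)$ is usually straightforward, but ruling out all intermediate values between $2$ and $c$ (or between $1$ and $d$) demands a detailed combinatorial analysis of the atoms and minimal zero-sum relations of $\mathcal B(G_0)$. This technical analysis is the heart of the proofs in the references \cite{Fa-Ge17a, Ge-Sc17a} and would occupy the bulk of a complete argument; once the blocks are in hand, the assembly step is essentially routine.
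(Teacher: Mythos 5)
The paper itself gives no proof of this proposition: it is quoted from \cite[Proposition 3.2]{Fa-Ge17a} and \cite[Theorem 1.1]{Ge-Sc17a}. Your proposal, by your own admission, also defers ``the heart'' of the argument to those same two references, so as written it is a programme rather than a proof. For part (1) the programme is viable and close to the known argument: a product lemma of the form $\Ca (H_1 \times H_2) = \Ca (H_1) \cup \Ca (H_2)$ (and likewise for $\mathcal R$ and $\daleth^*$) holds and can be proved exactly as Lemma \ref{5.4} is proved for tame degrees, and the building blocks require no delicate engineering: for $c \ge 3$ the monoid $\mathcal B (\{g,-g\})$ with $\ord (g) = c$ has $\Ca = \mathcal R = \daleth^* = \{c\}$ (its factorizations are written down explicitly in the proof of Proposition \ref{5.5}), while a Krull monoid with class group $C_2$ and two prime divisors in the nonzero class realizes $\{2\}$ with $\daleth^* = \emptyset$ by Proposition \ref{4.a}.2. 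But you neither prove the product lemma nor exhibit and verify any block, so even this easy half is not actually carried out.

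The genuine gap is part (2). Combining blocks with $\Delta (H_i) = \{d_i\}$ by a direct product cannot work: $\mathcal L (H_1 \times H_2)$ consists of the sumsets $L_1 + L_2$, and already $\{0, d_1\} + \{0, d_2\}$ has $d_2 - d_1$ in its set of distances, so differences of the target values are created; the hypothesis $\min C = \gcd C$ does not prevent this (for $C = \{2, 10\}$ a product of singleton-delta blocks produces the distance $8 \notin C$). You acknowledge the problem but replace the decisive step by an unspecified ``more delicate amalgamation'', and that step is precisely the entire content of \cite{Ge-Sc17a}: the realization theorem there is not obtained by gluing monoids with singleton sets of distances, but by a direct construction of a single Krull monoid whose whole system of sets of lengths is controlled so that exactly the prescribed distances occur. (Your observation that $\min C = \gcd C$ is also necessary is correct, since $\min \Delta (H) = \gcd \Delta (H)$, but it is not what has to be proved.) So for part (2) the route you describe, to the extent that it is specified, would fail, and the missing combination argument together with the unverified blocks means the proposal does not establish the proposition.
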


The above realization result for abstract Krull monoids together with
Claborn's Realization Theorem (\cite[Chapter 3.7]{Ge-HK06a}) and the realization theorem of Facchini and Wiegand (\cite[Theorem 3.4]{Fa-Wi04}) shows that there are Dedekind domains and monoids of modules (as discussed in Example \ref{sec-2.5}.2) whose sets of arithmetical invariants have the given form (details are worked out for the set of distances in \cite[Corollary 1.2]{Ge-Sc17a}).
Apart from such abstract realization theorems, there is a variety of results on these sets for very specific monoids and domains (e.g., \cite{C-C-M-M-P14, GG-MF-VT15, N-P-T-W16a, Ge-Zh16c, Co-Ka17a, ON-Pe18a}, \cite[Theorem 4.11]{Fa-Tr18a}). To provide an explicit example where, say the set of distances, is not an interval, consider the numerical monoid $H$ generated by $\mathcal A (H) = \{n, n+1, n^2-n-1\}$ for some $n \in \N_{\ge 3}$. Then $\Delta (H) = [1, n-2] \cup \{2n-5\}$ (\cite{B-C-K-R06}).
We end this subsection with a list of examples satisfying the assumptions in Theorem \ref{4.b}.

\medskip
\begin{example} \label{4.d}
We provide examples of transfer Krull monoids over finite abelian groups and, in particular, examples of commutative Krull monoids having prime divisors in all classes.

1. (Commutative Krull monoids having prime divisors in all classes)
Let $H$ be a commutative Krull monoid with class group $G$ and let $G_P \subset G$ denote the set of classes containing prime divisors. Then there is a transfer homomorphism $\theta \colon H \to \mathcal B (G_P)$ with $\mathsf c (H, \theta) \le 2$ (\cite[Theorem 3.4.10]{Ge-HK06a}). We continue with explicit examples where $G_P=G$ holds.

(a) The ring of integers $\mathcal O_K$ of an algebraic number field is a Dedekind domain with finite class group and each class contains infinitely many prime divisors, and the same is true for holomorphy rings in global fields. The distribution of prime divisors in the classes of a given Dedekind domain is a  problem which received a lot of attention (see \cite[Theorem 3.7.8]{Ge-HK06a} and the subsequent discussion for a survey).

(b) (Regular congruence monoids) Let $R$ be a commutative Dedekind domain, $\mathfrak f \subset R$ a nonzero ideal,  $\Gamma \subset R/\mathfrak f$ a multiplicatively closed subset, and $H_{\Gamma} = \{ a \in R^{\bullet} \mid a+ \mathfrak f \in \Gamma \}$. If $aR + \mathfrak f = R$ for all $a \in H_{\Gamma}$, then $H_{\Gamma}$ is a regular congruence monoid and hence it is Krull. If every class of $R$ contains a prime divisor, then the same is true for the regular congruence monoids  defined in $R$ (\cite[Chapter 2.11]{Ge-HK06a}).

(c) (Semigroup rings) If $R[H]$ is a Krull monoid domain, then every class contains a prime divisor (\cite{Ch11a}).

(d) (Finitely generated domains) If $R$ is an integral separable finitely generated algebra over an infinite field $K$ with $\dim_K (R) \ge 2$, then $R$ is noetherian and every class of its $v$-class group contains infinitely many prime divisors (\cite{Ka99c}).

(e) Let $G$ be an abelian group with $|G| \ne 2$. Then $\mathcal B (G)$ is a commutative Krull monoid with class group isomorphic to $G$ and each class contains precisely one prime divisor (\cite[Proposition 2.5.6]{Ge-HK06a}). This generalizes to  relative block monoids (\cite{HK92e, Ba-Ho09a}). Let $K \subset G$ be a subgroup and let $\mathcal B_K (G) = \{S \in \mathcal F (G) \mid \sigma (S) \in K \}$. Then $\mathcal B_K (G)$ is Krull, its class group is isomorphic to $G/K$ (apart from the case where $|G|=2$ and $K = \{0\}$), and each class contains precisely $|K|$ prime divisors.

2. (Commutative transfer Krull monoids which are not completely integrally closed) Let $\mathcal O$ be an order in an algebraic number field $K$, $\mathcal O_K$ its ring of integers, and let $\pi \colon \spec (\mathcal O_K) \to \spec (\mathcal O)$ be the natural map defined by $\pi ( \mathfrak p) = \mathfrak p \cap \mathcal O$. If $\mathcal O$ is seminormal, $\pi$ is bijective, and there is an isomorphism $\Pic (\mathcal O) \to \Pic (\mathcal O_K)$, then there is a transfer homomorphism $\theta \colon \mathcal O^{\bullet} \to \mathcal B ( \Pic (\mathcal O))$ with $\mathsf c (\mathcal O^{\bullet}, \theta) \le 2$  (see \cite[Theorem 5.8]{Ge-Ka-Re15a}, where a more general result is given in the setting of weakly Krull monoids).

3. (Non-commutative transfer Krull monoids) Let $\mathcal O_K$ be the ring of integers of an algebraic number field $K$, $A$ a central simple $K$-algebra, and $R$ a classical maximal $\mathcal O_K$-order of $A$. Let $\mathsf d$ be a distance on $R^{\bullet}$ that is invariant under conjugation by normalizing elements. If every stably free left $R$-ideal is free, then there is a transfer homomorphism $\theta \colon R^{\bullet} \to \mathcal B (G)$ with $\mathsf c_{\mathsf d} (R^{\bullet}, \theta) \le 2$, where $G$ is a ray class group of $\mathcal O_K$ (\cite[Theorem 1.1]{Sm13a} and \cite[Corollary 7.11 and Theorem 7.12]{Ba-Sm15}).
\end{example}

\medskip
\subsection{\bf Proof of the Main Results.} \label{sec-4.3}
We start with two  lemmas.

\medskip
\begin{lemma} \label{4.1}
Let $H$ be a \BF-monoid, $B$ a commutative reduced \BF-monoid, $\mathsf d$ a distance on $H$, and $\theta \colon H \to B$ a transfer homomorphism.
\begin{enumerate}
\item For every $a \in H$ we have $\mathsf c \big (\theta (a) \big) \le \mathsf c_{\mathsf d} (a) \le \max \{ \mathsf c \big( \theta (a) \big), \mathsf c_{\mathsf d} (H, \theta) \big\}$.

\item Suppose that  $\mathsf c_{\mathsf d} (H, \theta) \le 2$. Then $\Ca (B) \subset \Ca_{\mathsf d} (H)$, and if $\mathsf c (B) \ge 2$, then $\mathsf c_{\mathsf d} (H) = \mathsf c (B)$.

\item $\Delta (H)=\Delta (B)$ and $\daleth^* (H)=\daleth^* (B)$.
\end{enumerate}
\end{lemma}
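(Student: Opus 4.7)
The three parts build on one another, with Part 1 doing almost all of the work; Parts 2 and 3 then fall out of Part 1 together with the equality $\mathcal L(H)=\mathcal L(B)$ already recorded in \eqref{transfer}. My plan is therefore to attack Part 1 first, and read off Parts 2 and 3 by bookkeeping.

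For the lower bound $\mathsf c(\theta(a)) \le \mathsf c_{\mathsf d}(a)$ in Part 1, I will use a pushdown argument. Take arbitrary factorizations $y, y' \in \mathsf Z(\theta(a))$; property (T2), together with $B$ being reduced so that the unit $\varepsilon$ in (T2) is trivial, lifts them to $z, z' \in \mathsf Z^*(a)$ with $\theta(z)=y$ and $\theta(z')=y'$. A chain $z = z_0, z_1, \ldots, z_n = z'$ realizing $\mathsf c_{\mathsf d}(a)$ maps to a chain $\theta(z_0), \ldots, \theta(z_n)$ in $\mathsf Z(\theta(a))$, and I invoke the fact that a distance does not grow under pushdown by a transfer homomorphism, which is built into the Baeth--Smertnig axiomatisation. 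For the upper bound $\mathsf c_{\mathsf d}(a) \le \max\{\mathsf c(\theta(a)), \mathsf c_{\mathsf d}(H, \theta)\}$, I reverse the procedure. Given $z, z' \in \mathsf Z^*(a)$, take a chain $\theta(z) = y_0, y_1, \ldots, y_n = \theta(z')$ with steps bounded by $\mathsf c(\theta(a))$, and lift it step-by-step in $H$: at stage $i$ one applies (T2) to the common initial part of $y_{i-1}$ and $y_i$ in order to produce $z_i \in \mathsf Z^*(a)$ with $\theta(z_i)=y_i$ and $\mathsf d(z_{i-1}, z_i) \le \mathsf c(\theta(a))$. The terminal $z_n$ lies in the same fibre as $z'$, so a single extra bridging step at cost $\le \mathsf c_{\mathsf d}(H, \theta)$ closes the chain.

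Part 2 is then a supremum computation. Given $c \in \Ca(B)$ with $c \ge 2$ and $b \in B$ realising it, (T1) together with $B^\times=\{1\}$ makes $\theta$ surjective, so any $a \in \theta^{-1}(b)$ satisfies $c = \mathsf c(\theta(a)) \le \mathsf c_{\mathsf d}(a) \le \max\{c, 2\} = c$ by Part 1 and the hypothesis $\mathsf c_{\mathsf d}(H,\theta)\le 2$; hence $\mathsf c_{\mathsf d}(a)=c$, which shows $c \in \Ca_{\mathsf d}(H)$. Taking suprema and using $\mathsf c(B) \ge 2$ yields $\mathsf c_{\mathsf d}(H)=\mathsf c(B)$. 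Part 3 is immediate from $\mathcal L(H) = \mathcal L(B)$, since both $\Delta(H)$ and $\daleth^*(H)$ are expressed purely in terms of the system of sets of lengths.

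The main obstacle is the successive lifting in the upper bound of Part 1: one must coordinate the lifts so that each intermediate step stays bounded by $\mathsf c(\theta(a))$, while the terminal fibre correction remains bounded by $\mathsf c_{\mathsf d}(H, \theta)$, without the two costs accumulating. The definition of $\mathsf c_{\mathsf d}(H,\theta)$ is tailored precisely to this fibre bridging, and a careful application of (T2) at each stage keeps the two contributions separated so that the overall bound is their maximum rather than their sum.
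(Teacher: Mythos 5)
Your proposal is correct and follows essentially the same route as the paper: Parts 2 and 3 are argued exactly as in the paper's proof (surjectivity of $\theta$ from {\bf (T1)} with $B$ reduced plus the sandwich from Part 1, and $\mathcal L(H)=\mathcal L(B)$ from \eqref{transfer}), while for Part 1 the paper simply cites \cite[Proposition 4.6]{Ba-Sm15} for the upper bound and treats the lower bound as obvious (citing \cite{Fa-Tr18a} for a general proof), and your lifting/pushdown chain argument is precisely the standard proof behind those citations. One cosmetic remark: the terminal fibre correction is in general a chain of steps each of distance at most $\mathsf c_{\mathsf d}(H,\theta)$ rather than a single bridging step, which does not affect the bound.
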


\begin{proof}
1. A proof of the upper bound can be found in \cite[Proposition 4.6]{Ba-Sm15}. The lower bound is obvious in our setting (a proof in a much more general setting can be found in \cite[Theorem 2.22]{Fa-Tr18a}).

2. We suppose that  $\Ca (B) \ne \emptyset$ whence we have $\min \Ca (B) \ge 2$. If $b \in B$ with $\mathsf c (b) \ge 2$, then there is an $a \in H$ with $\theta (a) = b$ and the inequalities in 1. imply that $\mathsf c (b) = \mathsf c_{\mathsf d} (a)$. Thus $\Ca (B) \subset \Ca_{\mathsf d} (H)$. If $\mathsf c (B) \ge 2$, then 1. implies that $\mathsf c_{\mathsf d} (H) = \mathsf c (B)$.

3. Since $\mathcal L (H)=\mathcal L (B)$ by \eqref{transfer}, the assertions follow.
\end{proof}

\medskip
\begin{lemma} \label{4.4}
Let $G$ be a finite abelian group with $|G|\ge 3$.
\begin{enumerate}
\item For every $B\in \mathcal B(G)$ with $\mathsf c(B)\ge 4$  there exists $B'\in \mathcal B(G)$ with $|B'|<|B|$ and $\mathsf c(B')\ge \mathsf c(B)-1$.

\item
\[
\Ca(G)=\left\{
\begin{aligned}
&\{3\}, \qquad &&  \mathsf D (G) = 3, \\
&[2, \mathsf c(G)], \qquad && \mathsf D (G) \ge 4 \,.
\end{aligned}
\right.\]
\end{enumerate}
\end{lemma}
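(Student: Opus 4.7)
My plan for Part 1 is to choose two factorizations $z,z' \in \mathsf Z(B)$ realising the catenary degree $\mathsf c(B) \ge 4$ — that is, such that every chain of factorizations in $\mathsf Z(B)$ connecting them contains a step of distance $\ge \mathsf c(B)$ — and then to cancel a common atom. If $z$ and $z'$ share an atom $W$, I would set $B' := B\pi(W)^{-1} \in \mathcal B(G)$, so that $|B'| < |B|$. The factorizations $zW^{-1}, z'W^{-1} \in \mathsf Z(B')$ satisfy $\mathsf d(zW^{-1}, z'W^{-1}) = \mathsf d(z,z')$, and any chain in $\mathsf Z(B')$ between them lifts, via multiplication by $W$, to a chain in $\mathsf Z(B)$ of the same maximum step, because distances are invariant under multiplication by a common factor. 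This would give $\mathsf c(B') \ge \mathsf c(z,z') = \mathsf c(B) \ge \mathsf c(B) - 1$.

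The main obstacle is the case $\gcd(z,z') = 1$ in $\mathsf Z(H)$, where no common atom can be removed. Then $\mathsf d(z,z') = \max(|z|,|z'|) \ge 4$, and the plan is to locate a single atom $W$ dividing $B$ such that not all factorizations of $B$ containing $W$ fall into the same $(\mathsf c(B)-1)$-chain component of $\mathsf Z(B)$. Such a $W$ would produce two factorizations of $B\pi(W)^{-1}$ whose chain distance in $\mathsf Z(B\pi(W)^{-1})$ remains $\ge \mathsf c(B) - 1$ by the same lifting principle. If no such $W$ existed, then the atoms dividing $z$ and those dividing $z'$ would be rigidly separated across the chain components of $\mathsf Z(B)$, and I expect this to clash with $\mathsf c(z,z') = \mathsf c(B) \ge 4$ through a careful combinatorial analysis of how chains in $\mathsf Z(B)$ propagate between the components; this is the hardest step.

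For Part 2, the inclusion $\Ca(G) \subseteq [2,\mathsf c(G)]$ is immediate from the definitions, and $\mathsf c(G) = \max \Ca(G)$ is attained because $\mathcal B(G)$ is finitely generated. Given $\mathsf D(G) \ge 4$ and $k \in [3, \mathsf c(G)]$, I would choose $B \in \mathcal B(G)$ of minimum length with $\mathsf c(B) \ge k$; if $\mathsf c(B) \ge k+1 \ge 4$, Part 1 applied to $B$ produces $B'$ with $|B'| < |B|$ and $\mathsf c(B') \ge \mathsf c(B) - 1 \ge k$, contradicting minimality. Hence $\mathsf c(B) = k$ and $k \in \Ca(G)$. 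For $k = 2$ I would exhibit an explicit $B \in \mathcal B(G)$ with exactly two factorizations, both of the same length and at mutual distance $2$; the assumption $\mathsf D(G) \ge 4$ is precisely what provides enough flexibility to construct such an element (for instance, a suitable product of two minimal zero-sum sequences that also splits as a product of two different ones of the same total length).

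Finally, when $\mathsf D(G) = 3$ we have $G \cong C_3$ or $G \cong C_2 \oplus C_2$ by \eqref{Davenport2}. The monoid $\mathcal B(G)$ is Krull with class group $G$ and exactly one prime divisor per class, which places us in the ``otherwise'' sub-case of Proposition \ref{4.a}(3) and yields $\Ca(G) = \{3\}$.
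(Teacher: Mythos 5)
Your reduction in the case where the two extremal factorizations $z,z'$ share an atom is fine (the chain-lifting argument is correct), but the coprime case --- which you yourself flag as the hardest step --- is not proved, and the strategy you sketch for it cannot work in general. You propose to find an atom $W\mid B$ whose set of factorizations containing $W$ meets two different $(\mathsf c(B)-1)$-chain components of $\mathsf Z(B)$, and you ``expect'' that the nonexistence of such a $W$ contradicts $\mathsf c(B)\ge 4$. It does not: take $G$ cyclic of order $n\ge 4$ with generator $g$ and $B=g^n(-g)^n$. Then $\mathsf Z(B)=\{U(-U),\,V^n\}$ with $U=g^n$, $V=g(-g)$, so $\mathsf c(B)=n\ge 4$, the two factorizations have no common atom, and every atom of $B$ occurs in exactly one factorization, so each ``star'' is a singleton and trivially lies in a single component. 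Thus neither of your two cases produces a $B'$, and no contradiction arises from the failure of your dichotomy. The paper's proof of Part 1 uses a genuinely different reduction: after passing to a length-minimal $B$ realizing its catenary degree, it does not delete a whole atom but replaces two letters $g_1g_2$ of an atom $U_1$ of $z_0$ by the single element $g_1+g_2$ (so $|B'|=|B|-1$), and then shows by a careful splicing of a hypothetical $(d-2)$-chain for $B'$ --- reinserting $g_1g_2$ for $g_1+g_2$ in each step and using the minimality of $|B|$ to connect the endpoints --- that no $(d-2)$-chain can exist, whence $\mathsf c(B')\ge \mathsf c(B)-1$. Some idea of this kind is indispensable, and it is exactly what is missing from your argument.

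For Part 2, your minimal-length descent from Part 1 is a correct (and essentially equivalent) replacement for the paper's step-by-step descent, but two further points are left unfinished: the realization of $2\in\Ca(G)$ when $\mathsf D(G)\ge 4$ is only asserted, whereas one actually needs explicit elements in a small case distinction (the paper uses $g^n((n-2)g)(2g)$ for an element of order $n\ge 4$, $(e_1+e_2)^4e_1^2e_2^2$ for two independent elements of order $3$, and $e_1e_2e_3(e_1+e_2+e_3)(e_1+e_2)^2$ for three order-two elements); and for $\mathsf D(G)=3$ your appeal to the ``otherwise'' case of Proposition \ref{4.a}(3) is logically admissible (its proof in the paper is independent of Lemma \ref{4.4}, resting on an external citation), but it replaces a short direct verification rather than supplying one.
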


\smallskip

\begin{proof}
1. Let $B\in \mathcal B(G)$ with $\mathsf c(B)\ge 4$.
Then there exists an element $B_0\in \mathcal B(G)$ with $|B_0|$ being minimal such that $\mathsf c(B_0)=\mathsf c(B)$. Then $|B_0|\le |B|$ and since it is sufficient to prove the assertion for $B_0$, we may assume that $B_0=B$. We set $d=\mathsf c(B)$ and observe that  $B\in \mathcal B(G\setminus\{0\})$ by the minimality of $|B|$.
By the definition of $\mathsf c(B)$,  there exist two distinct factorizations
\[
z_0 =U_1\cdot \ldots \cdot U_k \in \mathsf Z (B) \quad \text{and} \quad  z_0'=V_1\cdot \ldots \cdot V_{\ell} \in \mathsf Z (B) \,,
\]
where $k\le \ell \in \N$  and $U_1,\ldots, U_k, V_1,\ldots, V_{\ell} \in \mathcal A (G)$, with $\mathsf d(z_0,z_0')=d$ and such that there is no $(d-1)$-chain concatenating $z_0$ and $z_0'$.
Let $g_1, g_2\in G$ with $g_1g_2\t U_1$ and, without loss of generality, we  assume that $g_1g_2\t V_1V_2$. We define
\[
B'=B(g_1+g_2)(g_1g_2)^{-1}, \ U_1'=U_1(g_1+g_2)(g_1g_2)^{-1}, \quad \text{and} \quad V'=V_1V_2(g_1+g_2)(g_1g_2)^{-1} \,.
\]
Then $|B'|<|B |$, $U_1' \in \mathcal A (G)$, and we consider a factorization of $V'$, say  $V'=W_1\cdot \ldots \cdot W_s$, where $s \in \N$,   $W_1,\ldots,W_s \in \mathcal A (G)$, and $g_1+g_2\t W_1$. We obtain two factorizations of $B'$, namely
\[
z =U_1'U_2\cdot \ldots \cdot U_k \in \mathsf Z (B') \quad \text{and} \quad
z' =W_1\cdot \ldots \cdot W_s V_3\cdot \ldots \cdot V_{\ell} \in \mathsf Z (B') \,.
\]
We assert that there is no $(d-2)$-chain of factorizations  concatenating $z$ and $z'$ which implies that $\mathsf c(B')\ge d-1=\mathsf c(B)-1$. If this holds, then 1. is proved.

Assume to the contrary that there exists a $(d-2)$-chain $(z=z_1', \ldots, z_m'=z')$ of factorizations of $B'$ concatenating $z$ and $z'$. For every $i\in[1,m]$, we set
\[
 z_i'=X_{i,1}\cdot \ldots \cdot X_{i,t_i} \in \mathsf Z (B') \,,
\]
where $t_i\in \N$,  $X_{i,1},\ldots, X_{i,t_i} \in \mathcal A (G)$, and with $g_1+g_2\t X_{i,1}$.
Then $X_{i,1}'=X_{i,1}g_1g_2(g_1+g_2)^{-1} \in \mathcal B (G)$ and we fix a factorization
  $Y_{i,1}\cdot \ldots \cdot Y_{i,r_i} \in \mathsf Z (X_{i,1}')$, where $r_i\in [1,2]$ and $Y_{i,1},\ldots, Y_{i,r_i} \in \mathcal A (G)$.
Thus, for each $i\in [1,m]$, we obtain a factorization
\[
z_{2i} =Y_{i,1}\cdot \ldots \cdot Y_{i,r_i}X_{i,2}\cdot \ldots \cdot X_{i,t_i} \in \mathsf Z (B) \,.
\]
We choose $i\in [1,m-1]$ and distinguish three cases.

First, if there exists $j_0\in[1, t_{i+1}]$ such that $X_{i,1}=X_{i+1,j_0}$, then we set
\[
z_{2i+1} =Y_{i,1}\cdot \ldots \cdot Y_{i,r_i}\cdot \prod_{j\in[1, t_{i+1}]\setminus \{j_0\}}X_{i+1, j} \in \mathsf Z (B) \,,
\]
which implies that  $\mathsf d(z_{2i}, z_{2i+1})\le d-2\le d-1$ and $\mathsf d(z_{2i+1}, z_{2i+2})\le 3\le d-1$.

Second, if there exists $j_0\in[1, t_i]$ such that $X_{i,j_0}=X_{i+1,1}$, then we set
\[
z_{2i+1}=Y_{i+1,1}\cdot \ldots \cdot Y_{i+1,r_{i+1}}\cdot \prod_{j\in[1, t_i]\setminus \{j_0\}}X_{i, j} \in \mathsf Z (B) \,,
\]
which implies that  $\mathsf d(z_{2i}, z_{2i+1})\le 3\le d-1$ and $\mathsf d(z_{2i+1}, z_{2i+2})\le d-2\le d-1$.

Finally, if none of the previous two conditions holds, then $\mathsf d(z_{2i}, z_{2i+2})\le d-2+1=d-1$ and we set $z_{2i+1}=z_{2i}$.

Clearly, we have $\mathsf d(z_0, z_2)\le 3\le d-1$ and thus there is a $(d-1)$-chain concatenating $z_0$ and
\[
  z_{2m}=Y_{m,1}\cdot \ldots \cdot Y_{m,r_m}W_2\cdot \ldots \cdot W_s V_3\cdot \ldots \cdot V_{\ell} \in \mathsf Z (B) \,.
\]
 Thus $V_1V_2=Y_{m,1}\cdot \ldots \cdot Y_{m,r_m}W_2\cdot \ldots \cdot W_s$. Since $\ell\ge \mathsf c(B)\ge 4$, we obtain $|V_1V_2|<|B|$ and hence $\mathsf c(V_1V_2)\le d-1$ by the minimality of $|B|$. Therefore there is a $(d-1)$-chain concatenating $z_{2m}$ and $z_0'$ and hence there is a $(d-1)$-chain concatenating $z_0$ and $z_0'$, a contradiction.

\smallskip
2. If $\mathsf D (G)=3$, then  \eqref{catenary-bounds} shows that $\mathsf c (G)=3$ and it easily follows that $\Ca(G)=\{3\}$.
Suppose that $\mathsf D (G)\ge 4$. Then \eqref{Davenport1} and  \eqref{catenary-bounds}  imply that $\mathsf c (G) \ge 4$. Let $B\in \mathcal B(G)$ such that $\mathsf c(B)=\mathsf c(G)$. Then 1. implies that there exist $B=B_0, B_1,\ldots B_k\in \mathcal B(G)$, where $k<|B|$,  such that $\mathsf c(B_i)\ge \mathsf c(B_{i+1})-1$ for each $i\in [0,k-1]$ and $\mathsf c(B_k)=3$.
It remains to verify that $2 \in \Ca (G)$ and we distinguish three cases.

If there is an element $g\in G$ with $\ord(g)=n \ge 4$, then $2= \mathsf c \big( g^n((n-2)g)(2g) \big) \in \Ca (G)$.

If there are two independent elements $e_1, e_2 \in G$ with $\ord (e_1)=\ord (e_2)=3$, then $2 = \mathsf c \big( (e_1+e_2)^4e_1^2e_2^2 \big) \in \Ca (G)$.

If there are three distinct elements $e_1, e_2, e_3$ with $\ord (e_1)=\ord (e_2)=\ord (e_3)=2$, then  $2 = \mathsf c \big(  e_1e_2e_3(e_1+e_2+e_3)(e_1+e_2)^2  \big) \in \Ca (G)$.
\end{proof}

\medskip
\begin{proof}[Proof of Theorem \ref{4.b}]
Let  $\theta \colon H \to \mathcal B (G)$ be a transfer homomorphism to an abelian group $G$  and let  $\mathsf d$ be a distance on $H$ with $\mathsf c_{\mathsf d} (H, \theta) \le 2$. By Lemma \ref{4.1}.2, we have $\Ca (G) \subset \Ca_{\mathsf d} (H)$ and, if $\mathsf c (G)\ge 2$, then $\mathsf c (G) = \mathsf c_{\mathsf d} (H)$. By \eqref{transfer}, we have $\mathcal L (H)=\mathcal L (G)$ whence $\Delta (H)=\Delta (G)$ and $\daleth^* (H)=\daleth^* (G)$.

1.  If $\mathsf D (G) \le 2$, then $\mathcal B (G)$ is half-factorial  whence $\Delta (G)=\daleth^* (G)=\emptyset$.
If $\mathsf D (G) = 3$, then $\Delta (G)$ and $\daleth^* (G)$ have the asserted values by  \eqref{distance-catenary} and \eqref{catenary-bounds}.

2. Suppose that $G$ is finite with $\mathsf D (G)\ge 4$. Then $\mathsf c_{\mathsf d} (H) = \mathsf c (G) \in [3,  \mathsf D (G)]$ by \eqref{catenary-bounds}.
Since $\mathcal B (G)$ is a Krull monoid with class group isomorphic to $G$ and each class contains a prime divisor, $\Delta (G)$ is an interval with $\min \Delta (G)=1$ by \cite[Theorem 1.1]{Ge-Yu12b}. The set $\daleth^* (G)$ is an interval with $\min \daleth^* (G)=3$ by \cite[Proposition 3.3]{Fa-Ge17a}.
By \eqref{inclusion1} and \eqref{distance-catenary}, we have $\daleth^* (G) \subset 2 + \Delta (G) \subset [2, \mathsf c (G)]$.
Thus by Lemma \ref{4.4}.2, we obtain that
\[
[2, \mathsf c_{\mathsf d} (H)] = [2, \mathsf c (G)] = \Ca (G) \subset \Ca_{\mathsf d} (H) \subset \mathcal R_{\mathsf d} (H) \subset [2, \mathsf c_{\mathsf d} (H)] \,,
\]
whence we have equality throughout. If $\mathsf D (G) = \mathsf D^* (G)$, then $\max \daleth^* (G) = 2+\max \Delta (G)= \mathsf c (G)$ by \cite[Corollary 4.1]{Ge-Gr-Sc11a}. Since $\Delta (H)$ and $\daleth^* (H)$ are intervals, we infer that
\[
\Ca_{\mathsf d} (H) = [2, \mathsf c_{\mathsf d} (H)] = \daleth^* (H) \cup \{2\} =  \big(2+\Delta (H)\big) \cup \{2\}   =   [2, \mathsf c_{\mathsf d} (H)] \,.
\]

3. Suppose that $G$ is infinite. Then for every finite set $L \subset \N_{\ge 2}$ there is an $a \in H$ such that $\mathsf L (a) = L$ (\cite[Theorem 7.4.1]{Ge-HK06a}). This implies that $\Delta (H)=\N$ and $\daleth^* (H) = \N_{\ge 3}$. Since $\Ca (G) \subset \Ca_{\mathsf d} (H) \subset \mathcal R_{\mathsf d} (H)$, it remains to show that $\N_{\ge 2} \subset \Ca (G)$.

Suppose that  $G$ is an infinite torsion group. Then \cite[Lemma 6.4.1]{Ge-HK06a} implies that for every $k\in \N_{\ge 3}$, there exists $A_k\in \mathcal B(G)$ such that $\mathsf c(A_k)=k$. It follows by Lemma \ref{4.4}.2 that $2\in \Ca(G)$. Therefore $\N_{\ge 2}\subset\Ca(G)$.

Suppose  there exists an element $g\in G$ with $\ord(g)=\infty$. Then for every $n\in \N_{\ge 2}$, we set $U=g^n(-ng)$, $V=g(-g)$, and $V'=ng(-ng)$. It follows that $\mathsf Z(g^n(-g)^nng(-ng))=\{U (-U), V^n V'\}$ which implies that $\mathsf c(g^n(-g)^nng(-ng))=n+1$ whence $\N_{\ge 3}\subset \Ca(G)$.
If $W=(-g)^2(-4g)^2(2g)^2(3g)^2$, then
\[
\mathsf Z(W)=\{((-g)(-4g)2g3g)^2, (-g)^22g\cdot (-4g)^22g(3g)^2, (-4g)(2g)^2\cdot (-g)^2(-4g)(3g)^2\}
\]
which implies that $2=\mathsf c(W)\in \Ca(G)$.
\end{proof}

\medskip
\begin{proof}[Proof of Proposition \ref{4.a}]~

1.  If $|G|=1$, then $H$ is factorial whence $\mathsf c (H)=0$ and $\Ca (H) = \mathcal R (H)=\emptyset$.

\smallskip
2. Suppose that $|G|=2$.   Since $H$ is not factorial, we obtain that $0 < \mathsf c (H) \le \mathsf D (G)=2$ by \eqref {catenary-bounds} whence $\Ca (H) = \{2\}$. Since $\Ca (H) \subset \mathcal R (H)$ and the maxima of the sets coincide by \eqref{inclusion2}, it follows that $\mathcal R (H)=\{2\}$.

\smallskip
3. Suppose that  $\mathsf D (G)=3$. Then $\mathsf c (G) =3$ by \eqref{catenary-bounds} whence $\Delta (H)=\{1\}$ and $\daleth^* (H)=\{3\}$. If there is a nonzero class containing at least two
distinct prime divisors, then $\min \Ca (H) = \min \mathcal R (H) = 2$ hence $\Ca (H)=\mathcal R (H)=[2,3]$ and otherwise we have $\mathcal R (H)=\Ca (H) = \{3\} $ by \cite[Proposition 3.4]{Fa-Ge17a}.
\end{proof}

\medskip
\section{The set of tame degrees} \label{5}
\medskip

Local tameness (i.e., the finiteness of all local tame degrees) is a basic finiteness property (in commutative factorization theory) in the sense that in many settings local tameness has to be guaranteed first before one proceeds to establish further arithmetical finiteness properties (we refer to the proof of the Structure Theorem for sets of lengths \cite[Chapter 4.3]{Ge-HK06a} which serves as a prototype for this procedure). Tame degrees have been introduced also in the non-commutative setting (\cite[Section 5]{Ba-Sm15}) but have not yet proved their usefulness in that context. Thus in this section we restrict to commutative Krull monoids. We introduce tame degrees in Subsection \ref{sec-5.1}, formulate our main results in Subsection \ref{sec-5.2}, and prove them in Subsection \ref{sec-5.3}.

\subsection{Tame degrees} \label{sec-5.1}
Let $H$ be commutative  \BF-monoid. The {\it tame degree} $\mathsf t (a,u)$ of an element $a \in H$ and an atom $u \in H$ is the smallest integer $N$ with the following property: if $a \in uH$, then for any factorization $a=v_1 \cdot \ldots \cdot v_n$, with  $v_1, \ldots, v_n \in \mathcal A (H)$, there is a subproduct which is a multiple of $u$, say $v_1 \cdot \ldots \cdot v_m$, and a refactorization of this subproduct which contains $u$, say $v_1 \cdot \ldots \cdot v_m = uu_2 \cdot \ldots \cdot u_{\ell}$ such that $\max \{\ell, m\} \le N$. More formally,
for $a \in H$ and $u \in \mathcal A (H_{\red})$,  $\mathsf t_H (a,u) = \mathsf t (a,u)$ is the smallest $N \in \mathbb N_0 \cup \{\infty\}$ having the following property:
\begin{itemize}
\item If $\mathsf Z (a) \cap u \mathsf Z (H) \ne \emptyset$ and $z \in \mathsf Z (a)$, then there exists some $z' \in \mathsf Z (a) \cap u \mathsf Z (H)$ such that $\mathsf d (z,z') \le N$.
\end{itemize}
By convention, $\mathsf Z (a) \cap u \mathsf Z (H) = \emptyset$ implies that $\mathsf t (a,u)=0$. If $\mathsf Z (a) \cap u \mathsf Z (H) \ne \emptyset$, then $\mathsf t (a,u) = 0$ if and only if $\mathsf Z (a) \cap u \mathsf Z (H) = \mathsf Z (a)$ (in other words, if every factorization of $a$ is divisible by $u$).
Since $\mathsf t (a, u) \le \max \mathsf L (a)$, all tame degrees are finite.
We call
\[
\Ta (H) = \{ \mathsf t (a,u) \mid a \in H, u \in \mathcal A (H_{\red}), \mathsf t (a,u)>0 \} \subset \N_0
\]
the {\it set of tame degrees} of $H$, $\mathsf t (H,u) = \sup \{\mathsf t (a,u) \mid a \in H \}$
is the {\it local tame degree} (at $u$), and
\[
\mathsf t (H)  = \sup \Ta (H) = \sup \{ \mathsf t (H, u) \mid u \in \mathcal A (H_{\red}) \} \in \N_0 \cup \{\infty\}
\]
is the {\it $($global$)$ tame degree} of $H$ (with the convention that $\sup \emptyset = 0$). We say that  $H$ is
\begin{itemize}
\item {\it locally tame} if $\mathsf t (H,u) < \infty$ for all $u \in \mathcal A (H_{\red})$, and

\item {\it (globally) tame} if $\mathsf t (H) < \infty$.
\end{itemize}
It is easy to check that $\mathsf t (H,u)=0$ if and only if $u$ is a prime whence $H$ is factorial if and only if $\mathsf t (H)=0$. Furthermore, we have (\cite[Theorem 1.6.6]{Ge-HK06a})
\[
\mathsf c (H) \le \mathsf t (H), \quad \text{and if $H$ is not factorial, then} \quad \max \{2, \rho (H)\} \le \mathsf t (H) \,.
\]
If $G$ is an abelian group, $G_0 \subset G$ a subset, $A \in \mathcal B (G_0)$, and $U \in \mathcal A (G_0)$, then we set (as usual)
\[
\mathsf t (G_0, U) = \mathsf t \big( \mathcal B (G_0), U \big), \quad \mathsf t (G_0) = \mathsf t \big( \mathcal B (G_0) \big), \quad \text{and} \quad  \Ta (G_0) = \Ta \big( \mathcal B (G_0) \big) \,.
\]

\subsection{Main Results} \label{sec-5.2} We formulate the main results and discuss them afterwards.

\medskip
\begin{theorem} \label{5.1}
Let $H$ be a commutative Krull monoid whose class group $G$ is an elementary $2$-group, say $G = C_2^r$ with $r \in \N_0$, and suppose that every class contains a prime divisor.
\begin{enumerate}
\item If $r=0$, then $\Ta (H)=\Ta(G)=\emptyset$ and if $r=1$, then $\Ta (H) = \{2\}$ and $\Ta (G) = \emptyset$.

\item If $r=2$, then $\Ta (G) = \{3\}$, and if one nonzero class contains at least two distinct prime divisors, then $\Ta (H) = [2,3]$.

\item If $r=3$, then $\Ta (G) = [2,4]$, and if one nonzero class contains at least two distinct prime divisors, then $\Ta (H) = [2,5]$.

\item \[
      \Ta (H) \quad \begin{cases}  = \Ta (G) = [2, 1 + \frac{r^2}{2}] & \quad \text{if} \ \ r \ge 4 \quad \text{is even,}  \\
                                   \supset \Ta (G)           \supset [2,  2 + \frac{r(r-1)}{2}] & \quad \text{if} \ \  r \ge 5 \quad \text{is odd.}
      \end{cases}
      \]
\end{enumerate}
\end{theorem}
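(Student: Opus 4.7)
The plan is to exploit the transfer homomorphism $\theta\colon H\to\mathcal B(G)$ guaranteed by the hypothesis that every class of $G=C_2^r$ contains a prime divisor, and to reduce the computation of $\Ta(H)$ to the combinatorial problem of computing $\Ta(G):=\Ta(\mathcal B(G))$ for $G=C_2^r$. This transfer satisfies $\mathsf c(H,\theta)\le 2$, which allows one to show (mimicking the argument of Lemma~\ref{4.1} for catenary degrees, but tracking refactorizations rather than lengths) that $\Ta(G)\subset\Ta(H)$. Any element of $\Ta(H)\setminus\Ta(G)$ arises exclusively from the possibility that a nonzero class of $G$ contains two distinct prime divisors $p_1,p_2$ of $H$: then $p_1p_2$ is an atom of $H$ and the lifted relation $(p_1p_2)^2=p_1^2\cdot p_2^2$ produces the tame degree $2$; combining with longer blocks yields the additional boundary values in parts (2) and (3).

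\medskip

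For the small-rank cases (1)--(3) the plan is a finite but carefully organized enumeration of atoms and relations in $\mathcal B(C_2^r)$ for $r\le 3$. When $r=0$ the monoid $H$ is factorial and both sets are empty; when $r=1$ the only non-trivial atom of $\mathcal B(C_2)$ is $e^2$, so all factorizations are unique and $\Ta(G)=\emptyset$, while a direct check using $p_1p_2p_3p_4$ (with $p_i$ in the unique non-zero class) pins down $\Ta(H)=\{2\}$. For $r=2$, the five atoms of $\mathcal B(C_2^2)$ are known explicitly, and the essential relation $(e_1e_2(e_1{+}e_2))^2=e_1^2\cdot e_2^2\cdot (e_1{+}e_2)^2$ yields $\Ta(G)=\{3\}$; the extra prime-divisor contribution adds $2$. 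For $r=3$ one uses additionally atoms of the form $e_1e_2e_3(e_1{+}e_2{+}e_3)$ and $(e_i+e_j)(e_i+e_k)(e_j+e_k)$ to realize each value in $[2,4]$, and the value $5$ in $\Ta(H)$ comes from a hybrid atom $p_1p_2$ combined with the longest relation in $\mathcal B(C_2^3)$.

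\medskip

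For part (4) with $r\ge 4$ the approach splits into an upper bound and an interval realization. The upper bound $\mathsf t(G)\le 1+r^2/2$ (for even $r$) follows from the standard estimate for the global tame degree of an elementary $2$-group, obtained by bounding the number of atoms needed to absorb a fixed atom $U\in\mathcal A(\mathcal B(G))$. The heart of the proof is the interval realization: for each $k\in[2,1+r^2/2]$ I construct an atom $U_k$ and an element $B_k\in\mathcal B(G)$ divisible by $U_k$ with $\mathsf t(B_k,U_k)=k$. The backbone is the ``long'' atom $W=e_1\cdots e_r(e_1+\cdots+e_r)$ together with its $|I|$-perturbations obtained by replacing a subset $\{e_i:i\in I\}$ by $\sum_{i\in I}e_i$; the distance between the two natural factorizations of $W\cdot W'$ scales linearly in $|I|$, providing a dial to move through the interval. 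For odd $r\ge 5$ the same family yields only the weaker interval $[2,2+r(r-1)/2]$ because the extremal atom is less symmetric. The principal obstacle is to realize the interval \emph{without gaps}: one must verify that each single-generator perturbation of the extremal factorization changes the tame degree by exactly one, which in turn demands a careful argument that no shorter refactorization of $B_k$ containing $U_k$ exists, ruling out any ``shortcut'' that would skip a value.
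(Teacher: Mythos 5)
Your treatment of the small-rank cases (1)--(3) and of the transfer inclusion $\Ta(G_P)\subset\Ta(H)$ matches the paper's strategy, and the extra degree $2$ from two primes in a class is exactly Lemma~\ref{5.3}.3. The genuine problem is in part~(4). The construction you sketch takes a single atom $W=e_1\cdots e_r(e_1+\cdots+e_r)$, a single $|I|$-perturbation $W'$, and examines the tame degree of the two-atom product $W\cdot W'$. Such a product has length at most $2(r+1)$, so any factorization of it has at most $r+1$ atoms, and therefore $\mathsf t(W\cdot W',W)\le r+1=\mathsf D(G)$. Varying $|I|$ from $0$ to $r$ cannot produce more than roughly $r$ distinct values. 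In other words, your ``dial'' sweeps an interval of length $O(r)$, whereas the claim requires the interval $[2,\,1+r^2/2]$ of length $\Theta(r^2)$. No perturbation of a product of \emph{two} atoms can ever reach tame degrees of order $r^2/2$, so this is not merely a verification issue about skipping values --- the family of elements is simply too small.

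The paper's constructions are of a fundamentally different shape. It takes the atom $U=e_0\prod_{j=1}^{r}(e_0+e_j)$ of length $r+1$, and forms the product $A^{(\nu)}=V_0^{(\nu)}\prod_{j=1}^{r}V_j$ of $r+1$ atoms each of length roughly $r$, so $|A^{(\nu)}|\sim r^2$. The unique factorization $z'$ of $A^{(\nu)}$ divisible by $U$ consists mostly of squares $e_i^2$, and its length is $2-\nu+r^2/2$, which is how the quadratic range is attained. Even this single family (assertion {\bf A1}) only covers $[2-r+r^2/2,\,1+r^2/2]$; the rest of the interval is filled by three further families ({\bf A2}--{\bf A4}, sensitive to the residue of $r$ modulo $4$) together with an induction on $r$ exploiting that $\mathcal B(C_2^{r-1})$ is a divisor-closed submonoid of $\mathcal B(C_2^r)$. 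You would need a construction of this size, plus a gluing argument across ranks, to make part~(4) go through; the one you propose cannot be repaired by a more careful ``no-shortcut'' analysis.
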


\medskip
\begin{theorem} \label{5.2}
Let $H$ be a commutative Krull monoid with class group $G$ such that $\mathsf D (G) \ge 3$ and suppose that every class contains a prime divisor.
\begin{enumerate}
\item $\Ta (C_3)= \Ta (C_2 \oplus C_2)=\{3\}$. If $G$ is finite and either $\mathsf D (G) \ge 4$ or there is a nonzero class containing at least two distinct prime divisors, then $[2, \mathsf D (G)] \subset \Ta (H)$.

\item If every class contains at least  $\mathsf D(G)+1$ prime divisors, then $\Ta (H) = [2, \mathsf t (H)]$.

\item If  $G$ is infinite, then $\Ta (H) = \N_{\ge 2}$.
\end{enumerate}
\end{theorem}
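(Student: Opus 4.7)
The plan is to use the transfer homomorphism $\theta\colon H \to \mathcal B(G)$, available because every class of $H$ contains a prime divisor (with $\mathsf c(H,\theta) \le 2$), as the main reduction tool. For commutative Krull monoids the fiber structure of $\theta$ is well understood, and a direct analysis shows that for each atom $u \in \mathcal A(H_{\red})$ the tame degree $\mathsf t_H(a,u)$ either equals $\mathsf t_{\mathcal B(G)}(\theta(a),\theta(u))$ or is computable from it together with the number of primes lying over each class. This lets us do most of the work inside $\mathcal B(G)$ and then lift.

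For part (1), the identifications $\Ta(C_3) = \Ta(C_2 \oplus C_2) = \{3\}$ I would verify by brute enumeration: in $C_3$ the only nontrivial relation is $g^3 \cdot (-g)^3 = (g(-g))^3$, which has distance $3$, and a single analogous relation handles $C_2 \oplus C_2$. For the general inclusion $[2, \mathsf D(G)] \subset \Ta(H)$ I would realize each value $k \in [3, \mathsf D(G)]$ by choosing an atom $U \in \mathcal A(G)$ of length $k$ with sufficiently generic support and considering the block $U(-U)$; its two essential factorizations $U \cdot (-U)$ and $\prod_{g \mid U} g(-g)$ then force the tame degree at $U$ to equal $k$. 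The value $k = 2$ I would handle either by using two distinct primes lying in a common nonzero class (yielding two distinct atoms of $H_{\red}$ over the same class of $\mathcal B(G)$) or, when $\mathsf D(G) \ge 4$, by a short mixed block in which two different length-$2$ atoms coexist.

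For part (2), the stronger hypothesis of at least $\mathsf D(G)+1$ primes per class is precisely what enables $\Ta(H)$ to fill out the entire interval $[2,\mathsf t(H)]$. Starting from an element $a^*$ and atom $u^*$ that realize $\mathsf t(H) =: T$, I would, for each target $m \in [2, T-1]$, build a modified element $a_m$ by a controlled prime-substitution: swap selected primes occurring in the two extremal factorizations of $a^*$ for fresh primes drawn from the same classes (available in abundance by the hypothesis), engineered so that the resulting pair of factorizations has distance exactly $m$ and no shorter refactorization exists. The reverse inclusion $\Ta(H) \subset [2,\mathsf t(H)]$ is immediate from the definitions.

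For part (3), when $G$ is infinite I would invoke Kainrath's theorem (see \cite[Theorem 7.4.1]{Ge-HK06a}), which under the standing hypothesis guarantees that every finite subset of $\N_{\ge 2}$ occurs as a set of lengths in $H$. This gives enough factorization flexibility to realize any prescribed $k \ge 2$ as a tame degree directly: if $G$ has an element $g$ of infinite order I would use blocks like $g^k(-g)^k \cdot g \cdot (-g)$ together with appropriate auxiliary atoms, and if $G$ is infinite torsion I would instead choose elements of sufficiently large order and run analogous constructions. The main obstacle I anticipate lies in part (2): the prime-swap construction must hit the \emph{exact} tame degree $m$, which requires ruling out accidental shorter refactorizations. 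The hypothesis of $\mathsf D(G)+1$ primes per class is tailored precisely for this, but verifying the exactness rigorously will demand a careful bookkeeping of how the fresh primes interact with atoms of length up to $\mathsf D(G)$.
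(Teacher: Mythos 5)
The decisive gap is in part (2). Your plan -- start from a pair $(a^*,u^*)$ realizing $\mathsf t(H)$ and, for each target $m$, perform prime swaps ``engineered so that the resulting pair of factorizations has distance exactly $m$ and no shorter refactorization exists'' -- is exactly the step that cannot be carried out as described: $\mathsf t(a,u)$ is a maximum over \emph{all} factorizations $z$ of $a$ of the minimal distance from $z$ to $\mathsf Z(a)\cap u\mathsf Z(H)$, so you must control every factorization of the modified element, and nothing concrete is known about the extremal element $a^*$ (even the value of $\mathsf t(G)$ is unknown for most groups, let alone the structure of elements attaining $\mathsf t(H)$). Your opening premise, that $\mathsf t_H(a,u)$ is computable from $\mathsf t_{\mathcal B(G)}(\theta(a),\theta(u))$ together with the multiplicities of primes in the classes, is also not available: unlike catenary degrees, tame degrees do not transfer; the paper only has the one-sided inclusion $\Ta(G)\subset\Ta(H)$ (Lemma \ref{5.3}), and indeed $\mathsf t(H)=5>4=\mathsf t(G)$ for $G=C_2^3$ with a repeated class (Theorem \ref{5.1}.3), while $\mathsf t(H)=\mathsf t(G)$ in general is stated as an open problem. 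The paper avoids both difficulties with a different key idea, a descent lemma (assertion {\bf A} in its proof): whenever $\mathsf t(a,u)\ge \mathsf D(G)+1$ there is a $b$ with $|b|<|a|$ and $\mathsf t(b,u)\ge \mathsf t(a,u)-1$, obtained either by deleting one atom from a worst-case factorization or by replacing two primes $p_1p_2$ inside one atom by a single fresh prime $p$ with $[p]=[p_1]+[p_2]$ and $p\nmid_F u$ -- this is precisely where the hypothesis of $\mathsf D(G)+1$ primes per class is used. Since always $\mathsf t(b,u)\le\mathsf t(H)$ and $\mathsf t(b,u)\le |b|$, iterating must terminate at a value $\le \mathsf D(G)$, and a discrete intermediate-value argument then yields all of $[\mathsf D(G),\mathsf t(H)]$, with $[2,\mathsf D(G)]$ supplied by part (1); no element with a prescribed exact tame degree is ever constructed.

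There are secondary gaps as well. In part (3) your fallback for infinite torsion groups, ``choose elements of sufficiently large order,'' fails for groups of bounded exponent (e.g.\ an infinite elementary $2$-group), and Kainrath's realization theorem concerns sets of lengths only, which give no control over tame degrees (these depend on distances between factorizations, not on lengths). The paper instead observes that $\mathsf D(G)=\infty$, so the $U(-U)$ construction -- which is the same as your argument for $[3,\mathsf D(G)]$ and as Lemma \ref{5.6} -- already gives $\N_{\ge 3}\subset\Ta(G)$, and it obtains $2\in\Ta(G)$ by a four-case analysis (an element of infinite order; an element of order $\ge 4$; exponent $2$, via the constructions behind Theorem \ref{5.1}; exponent $3$, via an explicit block). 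Relatedly, in part (1) your proposal for $2\in\Ta(H)$ when $\mathsf D(G)\ge 4$, ``a short mixed block in which two different length-$2$ atoms coexist,'' is too vague and does not work as stated for elementary $2$-groups (a block such as $g^2h^2$ with $g,h$ independent factors uniquely); the same case analysis is needed there.
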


Let $H$ be a commutative Krull monoid with class group $G$ and let $G_P \subset G$ denote the set of classes containing prime divisors. The relationship between the tame degrees of $H$ and  the tame degrees of $\mathcal B (G_P)$ is not as close as it was with the catenary degrees. We will have again the basic inclusion  $\Ta (G_P) \subset \Ta (G)$ (Lemma \ref{5.3}).
However, it is not true that $H$ is globally tame (i.e., $\sup \Ta (H) < \infty$) if and only if $\mathcal B (G_P)$ is globally tame (i.e., $\sup \Ta (G_P) < \infty$; see \cite[Remark 3.4]{Ga-Ge-Sc15a}). Moreover, it is an open problem whether for all finite abelian groups $G$ with $\mathsf D (G) \ge 5$ (or with $\mathsf D (G)$ being sufficiently large), we have $\mathsf t (H) = \mathsf t (G)$. Theorem \ref{5.1}.3 reveals  that this fails for $\mathsf D (G)=4$. Proposition \ref{5.5} shows that every finite nonempty subset can be realized as the set of tame degrees of a commutative Krull monoid, if we do not impose any assumption on the distribution of prime divisors.
The standing conjecture for $G=C_2^r$ is that, for odd $r \ge 5$, we have $\mathsf t (H)=2+r(r-1)/2$. If this holds true, then all inclusions in Theorem \ref{5.1}.4. are equalities. The precise value of $\mathsf t (G)$ (in terms of the group invariants) is unknown  apart from the groups given in Theorem \ref{5.1} and a couple of small groups.

Let $G$ be an abelian group. Then $\mathcal B (G)$ is locally tame if and only if $G$ is finite (\cite[Theorem 4.4]{Ge-Ha08a}). If $G$ is finite with $|G| \ge 3$, then for all $U \in \mathcal A (G)$ we have the bounds (\cite[Proposition 6.5.1]{Ge-HK06a})
\begin{equation} \label{tamedegree-bounds}
\mathsf t (G, U) \le 1 + \frac{ |U|\, (\mathsf D (G)-1)}2 \quad \text{and} \quad \mathsf D (G) \le \mathsf t (G) \le 1 + \frac{\mathsf D (G) (\mathsf D (G)-1)}2\,.
\end{equation}

\subsection{Proof of the Main Results} \label{sec-5.3} We start with some lemmas.

\medskip
\begin{lemma} \label{5.3}
Let $H$ be a commutative Krull monoid with class group $G$ and let $G_P \subset G$ denote the set of classes containing prime divisors.
\begin{enumerate}
\item $H$ is locally tame if and only if $\mathcal B (G_P)$ is locally tame.

\item $\Ta (G_P) \subset \Ta (H)$ and hence  $\mathsf t (G_P) \le \mathsf t (H)$.

\item If there is a nonzero class containing at least two distinct prime divisors, then $2 \in \Ta (H)$.
\end{enumerate}
\end{lemma}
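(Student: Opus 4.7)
The plan is to push all three statements through the block homomorphism $\theta \colon H \to \mathcal B(G_P)$ with $\mathsf c(H, \theta) \le 2$ from \cite[Theorem 3.4.10]{Ge-HK06a}, using that (WT2) lifts factorizations of $\theta(a)$ to factorizations of $a$, that $\theta \colon \mathcal A(H_{\red}) \to \mathcal A(\mathcal B(G_P))$ is surjective but in general not injective (two distinct prime divisors in the same class produce distinct atoms above the same minimal zero-sum sequence), and that $\mathsf d(\theta(z), \theta(z')) \le \mathsf d(z, z')$ for factorizations $z, z'$ of a common element.

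For (1) I would appeal to the standard transfer-of-local-tameness principle for commutative BF-monoids whose transfer homomorphism has finite fibre catenary (see the discussion around \cite[Section 3]{Ga-Ge-Sc15a}). The direction $H$ locally tame $\Rightarrow \mathcal B(G_P)$ locally tame is the easy one: given $U \in \mathcal A(\mathcal B(G_P))$ and $B \in U\mathcal B(G_P)$, pick $u \in \mathcal A(H_{\red})$ with $\theta(u) = U$, lift $B$ to some $a \in uH$ via (T2), lift an arbitrary factorization of $B$ to one of $a$ by (WT2), apply local tameness of $H$ at $u$, and project back through $\theta$. The converse uses the bound $\mathsf c(H, \theta) \le 2$ to modify a lifted ``good'' factorization of $a$ (which contains merely some preimage of $\theta(u)$) within its $\theta$-fibre into one that contains the prescribed atom $u$ itself.

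For (2) I propose a rigid lifting that bypasses the non-injectivity of $\theta$ on atoms. Given $t = \mathsf t_{\mathcal B(G_P)}(B_0, U_0) \in \Ta(G_P)$, write $B_0 = g_1^{n_1}\cdot \ldots \cdot g_k^{n_k}$ with distinct $g_i \in G_P$, fix exactly \emph{one} prime divisor $p_i$ in each class $g_i$, and put $a = p_1^{n_1}\cdot \ldots \cdot p_k^{n_k}$; the zero-sum condition on $B_0$ gives $a \in H$ and $\theta(a) = B_0$. The crucial observation is that every atom of $H$ dividing $a$ has support contained in $\{p_1, \ldots, p_k\}$, hence is of the form $p_1^{b_1}\cdot \ldots \cdot p_k^{b_k}$ with $g_1^{b_1}\cdot \ldots \cdot g_k^{b_k}$ a minimal zero-sum sequence dividing $B_0$. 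Consequently $\theta$ restricts to a bijection between the atoms of $H$ dividing $a$ and the atoms of $\mathcal B(G_P)$ dividing $B_0$, which extends to a length-preserving and distance-preserving bijection $\mathsf Z_H(a) \to \mathsf Z_{\mathcal B(G_P)}(B_0)$. Taking $u \in \mathcal A(H_{\red})$ to correspond to $U_0$ under this bijection yields $\mathsf t_H(a, u) = \mathsf t_{\mathcal B(G_P)}(B_0, U_0) = t$, so $t \in \Ta(H)$.

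For (3) I would exhibit an element of $H$ carrying two length-$2$ factorizations at distance $2$. Let $g \in G \setminus \{0\}$ contain distinct prime divisors $p_1, p_2$, and split on $n := \ord(g)$. If $n < \infty$, take $a = p_1^n p_2^n \in H$: the atoms of $H$ dividing $a$ are precisely the minimal zero-sum sequences $p_1^i p_2^{n-i}$, $i \in [0,n]$, so $(p_1^n)\cdot (p_2^n)$ and $(p_1^{n-1} p_2)\cdot (p_1 p_2^{n-1})$ are two factorizations of $a$ sharing no atom, both of length $2$, and hence $\mathsf t(a, p_1^n) = 2$. If $\ord(g) = \infty$, choose a prime divisor $r$ in the nonzero class $-2g$ (available since every class contains a prime divisor) and take $a = p_1^2 p_2^2 r^2 \in H$; a short check that any zero-sum subsequence $p_1^a p_2^b r^c$ requires $a+b=2c$ (as $g$ has infinite order) together with minimality rules out $(p_1 r)^2$-style candidates and leaves exactly the atoms $p_1 p_2 r$, $p_1^2 r$, $p_2^2 r$, yielding the two length-$2$ factorizations $(p_1^2 r)\cdot (p_2^2 r)$ and $(p_1 p_2 r)^2$ at distance $2$, so $\mathsf t(a, p_1^2 r) = 2$. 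In either case $2 \in \Ta(H)$. The main obstacle is really the converse half of (1): the non-canonical nature of lifting along $\theta$ is precisely what the bound $\mathsf c(H, \theta) \le 2$ was designed to overcome, and the rigid supports $\{p_1, \ldots, p_k\}$ and $\{p_1, p_2, r\}$ engineered in (2) and (3) are chosen exactly so that this subtlety does not appear.
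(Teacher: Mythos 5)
Your proposal follows essentially the same route as the paper. For (2), your construction (one fixed prime divisor per class occurring in $B_0$, giving a length- and distance-preserving bijection $\mathsf Z_H(a)\to \mathsf Z_{\mathcal B(G_P)}(B_0)$ and hence $\mathsf t_H(a,u)=\mathsf t(B_0,U_0)$) is exactly the paper's argument, and for (1) the paper simply cites \cite[Proposition 3.3]{Ga-Ge-Sc15a}, which is in effect what your sketch does as well. For (3), your finite-order element $a=p_1^np_2^n$ with the two factorizations $(p_1^n)(p_2^n)$ and $(p_1^{n-1}p_2)(p_1p_2^{n-1})$ is precisely the paper's example; your extra infinite-order case is a genuine supplement (the paper's proof is written only for $\ord(g)=n<\infty$), but be aware that picking a prime divisor $r$ in the class $-2g$ invokes the assumption that every class contains a prime divisor, which is not among the hypotheses of Lemma \ref{5.3} — it does hold wherever the lemma is applied (Theorems \ref{5.1} and \ref{5.2}), but as a proof of the lemma as stated that step is not justified.
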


\begin{proof}
We may suppose that $H$ is reduced. Let the inclusion $H \hookrightarrow F = \mathcal F (P)$ be a divisor theory of $H$. Then $G = \mathsf q (F)/\mathsf q (H) = \{[a]= a \mathsf q (H) \mid a \in F \}$ is the class group of $H$ and $G_P = \{[p] \mid p \in P \}$ is the set of classes containing prime divisors. Let $\boldsymbol \beta \colon H \to \mathcal B (G_P)$ be the transfer homomorphism defined by $\boldsymbol \beta ( p_1 \cdot \ldots \cdot p_{\ell}) = [p_1] \cdot \ldots \cdot [p_{\ell}]$, where $p_1, \ldots, p_{\ell} \in P$ (see \cite[Proposition 3.4.8]{Ge-HK06a}).

1. See \cite[Proposition 3.3]{Ga-Ge-Sc15a}.

2.  Let $A \in \mathcal B (G_P)$ and $U \in \mathcal A (G_P)$ be given with $\mathsf t (A, U) > 0$. Thus $U \t A$ and $U \ne A$, say $U = g_1 \cdot \ldots \cdot g_k$ and $A = U g_{k+1} \cdot \ldots \cdot g_{\ell}$ with $\ell \in \N$ and $k \in [1, \ell-1]$. For every $i \in [1, \ell]$ we choose a prime element $p_i \in g_i \cap P$, and we do it in such a way that $g_i=g_j$ implies that $p_i=p_j$ for all $i, j \in [1, \ell]$. Then $u=p_1 \cdot \ldots \cdot p_k \in \mathcal A (H)$, $a = u p_{k+1} \cdot \ldots \cdot p_{\ell} \in H$, $\boldsymbol \beta (u)=U$, and $\boldsymbol \beta (a) = A$. Clearly, there is a bijection from $\mathsf Z_H (a)$ to $\mathsf Z_{\mathcal B (G_P)} (A)$, and this implies that $\mathsf t (A,U)= \mathsf t (a,u) \in \Ta (H)$.

3. Let $g \in G_P$ with $\ord (g) = n \ge 2$ and $p, q \in g$ be two distinct prime divisors. Then $u = p^n, v = q^n \in \mathcal A (H)$ and for $a = u v = (p^{n-1}q) (p q^{n-1})$ we have $\mathsf t (a, u)=2$.
\end{proof}

\medskip
\begin{lemma} \label{5.4}
Let $n\in \N$,   $(H_i)_{i=1}^n$ be a family of commutative \BF-monoids, and $H=H_1\times\ldots\times H_n$. Then $\Ta(H)=\bigcup_{i=1}^n\Ta(H_i)$.
\end{lemma}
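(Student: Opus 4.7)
The plan is to exploit the fact that the factorization monoid of a product is a product of the factorization monoids, and then reduce the computation of each tame degree in $H$ to a tame degree in a single factor $H_i$.

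First I would fix notation: since tame degrees depend only on $H_{\red}$, I may assume each $H_i$ is reduced. The atoms of $H$ are exactly the elements of the form $u = (1,\ldots,1,u_i,1,\ldots,1)$ with $u_i \in \mathcal A(H_i)$, so $\mathcal A(H) = \bigsqcup_i \iota_i(\mathcal A(H_i))$ where $\iota_i$ denotes the natural embedding. Consequently, the factorization monoid decomposes as $\mathsf Z(H) = \mathsf Z(H_1) \times \cdots \times \mathsf Z(H_n)$, and for $a=(a_1,\ldots,a_n) \in H$ we have $\mathsf Z(a) = \mathsf Z(a_1) \times \cdots \times \mathsf Z(a_n)$. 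A short computation with $\gcd$'s in the factorization monoid shows that for $z=(z_1,\ldots,z_n), z'=(z_1',\ldots,z_n') \in \mathsf Z(H)$,
\[
\mathsf d(z,z') \;=\; \max\Bigl\{\sum_{j=1}^n |w_j|,\; \sum_{j=1}^n |w_j'|\Bigr\}, \qquad w_j = z_j/\gcd(z_j,z_j'),\ w_j' = z_j'/\gcd(z_j,z_j'),
\]
from which the key inequalities $\mathsf d(z,z') \ge \mathsf d_{H_i}(z_i,z_i')$ for every $i$, and $\mathsf d(z,z') = \mathsf d_{H_i}(z_i,z_i')$ whenever $z_j = z_j'$ for all $j \ne i$, follow at once.

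For the inclusion $\bigcup_{i=1}^n \Ta(H_i) \subset \Ta(H)$, given $N \in \Ta(H_i)$ realized by $a_i \in H_i$ and $u_i \in \mathcal A(H_i)$, I would set $a = \iota_i(a_i)$ and $u = \iota_i(u_i)$. Since $\mathsf Z_H(a) = \iota_i(\mathsf Z_{H_i}(a_i))$ (the other coordinates have only the empty factorization) and the distance in $H$ between two such factorizations coincides with the distance in $H_i$, one gets $\mathsf t_H(a,u) = \mathsf t_{H_i}(a_i,u_i) = N$, so $N \in \Ta(H)$.

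For the reverse inclusion $\Ta(H) \subset \bigcup_i \Ta(H_i)$, take $N = \mathsf t_H(a,u) > 0$. The atom $u$ lives in exactly one coordinate, say $u = \iota_i(u_i)$, and the positivity of $\mathsf t_H(a,u)$ forces $u_i \mid a_i$. I would then show $\mathsf t_H(a,u) = \mathsf t_{H_i}(a_i,u_i)$ by two matching inequalities. For $\le$: given any $z = (z_1,\ldots,z_n) \in \mathsf Z_H(a)$, pick $z_i'' \in \mathsf Z(a_i) \cap u_i \mathsf Z(H_i)$ with $\mathsf d_{H_i}(z_i, z_i'') \le \mathsf t_{H_i}(a_i,u_i)$ and modify only the $i$-th coordinate of $z$; by the distance formula above, the distance in $H$ equals the distance in $H_i$. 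For $\ge$: by the definition of $\mathsf t_{H_i}(a_i,u_i)$ as a minimum, there exists $z_i \in \mathsf Z(a_i)$ for which every $z_i'' \in \mathsf Z(a_i) \cap u_i \mathsf Z(H_i)$ satisfies $\mathsf d_{H_i}(z_i,z_i'') \ge \mathsf t_{H_i}(a_i,u_i)$; extending $z_i$ by arbitrary choices in the other coordinates yields $z \in \mathsf Z_H(a)$, and for every $z' \in \mathsf Z_H(a) \cap u\mathsf Z(H)$ the condition $u \mid z'$ forces $u_i \mid z_i'$, so $\mathsf d_H(z,z') \ge \mathsf d_{H_i}(z_i,z_i') \ge \mathsf t_{H_i}(a_i,u_i)$. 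Hence $N = \mathsf t_{H_i}(a_i,u_i) \in \Ta(H_i)$.

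The only subtle point is the correct formula for the distance on a product of factorization monoids and the verification that modifying only the $i$-th coordinate is optimal; once this is established, both inclusions are immediate. I expect no serious obstacle beyond bookkeeping.
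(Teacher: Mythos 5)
Your proof is correct and follows essentially the same approach as the paper: reduce to the reduced case, observe that atoms and factorizations of the product decompose coordinatewise, and conclude that $\mathsf t_H(a,u)=\mathsf t_{H_i}(a_i,u_i)$ when $u$ lives in the $i$-th factor. The paper obtains the inclusion $\bigcup_i\Ta(H_i)\subset\Ta(H)$ by simply noting that each $H_i$ is a divisor-closed submonoid of $H$ and reduces the reverse inclusion to $n=2$ by induction, whereas you verify the distance formula on the product and prove the equality $\mathsf t_H(a,u)=\mathsf t_{H_i}(a_i,u_i)$ directly for general $n$; these are presentational differences only.
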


\begin{proof}
Without loss of generality, we may assume that $H_1,\ldots, H_n$ are reduced. Then $H_1,\ldots , H_n$ are divisor closed
submonoids of $H$ whence $\bigcup_{i=1}^n\Ta(H_i)\subset \Ta(H)$. Thus we have to verify the reverse inclusion, and by an inductive argument it suffices to do this for $n=2$.

Let $a\in H$ and $u\in \mathcal A(H)$ with $\mathsf t_H (a,u)> 0$.   Therefore $u \t a$ (in $H$) and there exist $a_1\in H_1$ and $a_2\in H_2$ such that $a=a_1a_2$. Note that $\mathcal  A(H)=\mathcal A(H_1)\cup\mathcal A(H_2)$ and $\mathsf Z_H (a) = \mathsf Z_{H_1} (a_1) \mathsf Z_{H_2} (a_2)$. Thus there exists $i\in [1,2]$,  such that $u\in \mathcal A(H_i)$, say $i=1$. Since $u \t a$ in $H$, it follows that $u \t a_1$ in $H_1$ whence $\mathsf t_H(a, u)=\mathsf t_{H_1} (a_1,u)\in \Ta(H_1)$.
\end{proof}

\begin{proposition} \label{5.5}
For every finite nonempty subset $C \subset \N_{\ge 2}$ there is a finitely generated commutative Krull monoid $H$ with finite class group such that
$\Ta(H)=C$.
\end{proposition}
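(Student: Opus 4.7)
The plan is to reduce the problem to the realization of singleton tame-degree sets via Lemma \ref{5.4}, and then to exhibit explicit finitely generated commutative Krull monoids with finite class group whose tame-degree set is any prescribed singleton in $\N_{\ge 2}$. Since a direct product of finitely generated commutative Krull monoids with finite class group is again finitely generated, commutative, Krull, and has finite class group, it suffices to construct, for every $c \in \N_{\ge 2}$, such a monoid $H_c$ satisfying $\Ta(H_c) = \{c\}$. Then $H := \prod_{c \in C} H_c$ is a finitely generated commutative Krull monoid with finite class group and, by Lemma \ref{5.4}, $\Ta(H) = \bigcup_{c \in C} \Ta(H_c) = C$.

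For $c \ge 3$ I would take $G = C_c$, fix a generator $g$, and set $H_c := \mathcal B(\{g, -g\})$. Since $c \ge 3$ ensures $g \ne -g$, the set $\{g, -g\}$ is finite of size $2$, so $H_c$ is finitely generated with finite class group (a subgroup of $C_c$, in fact equal to $C_c$), and its three atoms are $U_1 = g^c$, $U_2 = (-g)^c$, and $V = g(-g)$. Every non-unit $B \in H_c$ has the form $g^a (-g)^b$ with $a \equiv b \pmod c$, and its factorizations are parametrized by triples $(\alpha, \beta, \gamma) \in \N_0^3$ with $c\alpha + \gamma = a$ and $c\beta + \gamma = b$. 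A direct comparison shows that for any two such factorizations $z_\alpha, z_{\alpha'}$ with $\alpha < \alpha'$, the greatest common part equals $U_1^\alpha U_2^{\min(\beta,\beta')} V^{\gamma'}$, giving distance $\max\{c(\alpha' - \alpha),\, 2(\alpha'-\alpha)\} = c(\alpha' - \alpha)$. Consequently, the closest factorization containing any prescribed atom $u$ always lies at distance exactly $c$ from any factorization lacking $u$, so $\mathsf t_{H_c}(B, u) \in \{0, c\}$ for every $B$ and every atom $u$, and $\Ta(H_c) = \{c\}$.

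For $c = 2$ the preceding construction degenerates because $-g = g$ in $C_2$. Instead I would let $H_2$ be the saturated submonoid of $\mathcal F(\{p_0, p_1, p_2\})$ defined by $H_2 = \{p_0^{a_0} p_1^{a_1} p_2^{a_2} : a_1 + a_2 \equiv 0 \pmod 2\}$. A routine verification shows that the inclusion $H_2 \hookrightarrow \mathcal F(\{p_0, p_1, p_2\})$ is a divisor theory with class group $C_2$, where $p_0$ represents the zero class and $p_1, p_2$ represent the nonzero class, and that the atoms of $H_2$ are $p_0, p_1^2, p_2^2, p_1 p_2$. Hence $H_2$ is a finitely generated commutative Krull monoid with finite class group in which every class contains a prime divisor, so Theorem \ref{5.1}.1 yields $\Ta(H_2) = \{2\}$.

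The main obstacle is the explicit tame-degree computation in $\mathcal B(\{g, -g\})$ for $c \ge 3$. While $c \in \Ta(H_c)$ is immediate from $B = g^c(-g)^c$ and the two factorizations $U_1 U_2$ and $V^c$, establishing the reverse inclusion $\Ta(H_c) \subseteq \{0, c\}$ requires the uniform distance bound above to hold across \emph{every} $B \in H_c$ and \emph{every} atom $u$, and this rests on showing that the shortest distance between factorizations with and without a given atom is always realized by the elementary swap $V^c \leftrightarrow U_1 U_2$. Once this calculation is carried out, the global conclusion follows immediately from Lemma \ref{5.4}.
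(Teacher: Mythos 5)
Your proposal is correct and, for $c\ge 3$, is essentially the paper's own argument: the paper also reduces via Lemma \ref{5.4} to the building blocks $H_i=\mathcal B(\{e_i,-e_i\})$ with $\ord(e_i)=d_i$ inside $G\cong C_{d_1}\oplus\ldots\oplus C_{d_n}$, parametrizes the factorizations of $e_i^s(-e_i)^t$ exactly as you do, and concludes $\Ta(H_i)=\{d_i\}$. The genuine difference is your separate treatment of $c=2$, and it is a substantive one: as you observe, when $\ord(g)=2$ one has $-g=g$, so $\mathcal B(\{g,-g\})=\mathcal B(\{g\})$ is factorial with $\Ta=\emptyset$, and the uniform construction (which the paper applies to all $d_i\in\N_{\ge 2}$, its atom list and factorization formula tacitly assuming $e_i\ne -e_i$) does not produce the value $2$; your patch is therefore a fix of a case the paper glosses over, not mere caution. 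Your replacement block $H_2=\{p_0^{a_0}p_1^{a_1}p_2^{a_2}\mid a_1+a_2\equiv 0 \pmod 2\}$ is indeed a finitely generated Krull monoid with divisor theory into $\mathcal F(\{p_0,p_1,p_2\})$, class group $C_2$, and prime divisors in both classes, so Theorem \ref{5.1}.1 gives $\Ta(H_2)=\{2\}$; alternatively you could avoid invoking Theorem \ref{5.1} (whose $r=1$ case rests on an external tameness computation) by noting that the factorizations of $p_1^{a_1}p_2^{a_2}$ in $H_2$ form a chain in which consecutive factorizations differ by the swap $p_1^2\cdot p_2^2\leftrightarrow (p_1p_2)^2$ at distance $2$, which yields $\mathsf t(a,u)\in\{0,2\}$ directly, in complete analogy with your $c\ge 3$ computation. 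With either justification of the $c=2$ block, your argument is complete.
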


\begin{proof}
Let $C=\{d_1,\ldots, d_n\} \subset \N_{\ge 2}$  be a finite nonempty subset with $n\in \N$ and $d_1,\ldots d_n\in \N_{\ge 2}$.
Let $G$ be a finite abelian group with $G\cong C_{d_1}\oplus\ldots \oplus C_{d_n}$ and $(e_1,\ldots, e_n)$ be a basis of $G$ with $\ord(e_i)=d_i$ for all $i \in [1,n]$.
We define $H_i=\mathcal B(\{e_i,-e_i\})$ for all $i\in [1,n]$.  Then $H=H_1\times\ldots \times H_n$ is a finitely generated Krull monoid with finite class group, and by Lemma \ref{5.4}, it is sufficient to  prove that $\Ta(H_i)=\{d_i\}$ for each $i\in [1,n]$.

Let $i\in [1,n]$. Then $\mathcal A(H_i)=\{U_i=e_i^{d_i}, -U_i=(-e_i)^{d_i}, V_i=e_i(-e_i)\}$, $U_i(-U_i)= V_i^{d_i}$  and hence $ \{d_i\} = \mathsf t \big( U_i(-U_i), U_i) \in \Ta (H_i)$. It remains to show that for all $A \in H_i$ we have $\mathsf t (A, U_i), \mathsf t (A, -U_i), \mathsf t (A, V_i) \in \{0, d_i\}$.

Let $A =e_i^s(-e_i)^t\in \mathcal F (\{e_i, -e_i\})$ with $s, t \in \N_0$. Then $A \in H_i$ if and only if  $s\equiv t \pmod {d_i}$.
If this holds and  $s_0\in [0, d_i-1]$ such that $t\equiv s_0\pmod {d_i}$, then
\[
\mathsf Z_{H_i} (A) =  \left\{ U_i^{\frac{s-s_0}{d_i}-j}(-U_i)^{\frac{t-s_0}{d_i}-j}  V_i^{s_0+jd_i} \ \Big| \quad  j\in \left[0, \min\left\{\frac{s-s_0}{d_i}, \frac{t-s_0}{d_i}\right\}\right]  \right\}
\]
which shows the assertion.
\end{proof}

\medskip
\begin{lemma} \label{5.6}
Let $G$ be an  abelian group. If $G$ is finite, then  $[3, \mathsf D (G)] \subset \Ta (G)$ and if $G$ is infinite, then $\N_{\ge 3} \subset \Ta (G)$.
\end{lemma}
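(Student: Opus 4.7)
The plan is to realise every $k$ in the required range as a tame degree via a uniform construction: given an atom $U=g_1\cdot\ldots\cdot g_k\in \mathcal A(G)$ of length $k\ge 3$, I will show that $\mathsf t\bigl(U(-U),U\bigr)=k$, so the whole question reduces to producing, inside $\mathcal A(G)$, an atom of each prescribed length $k$. The key features of $A:=U(-U)$ are: (i) $-U$ is again an atom, since its subsums are just the negatives of the subsums of $U$, so the only $U$-containing factorization of $A$ is $U\cdot(-U)$ itself; (ii) $0\notin \supp(A)$, hence every atom dividing $A$ has length $\ge 2$, forcing $|z|\le |A|/2=k$ for every $z\in \mathsf Z(A)$ and, by \eqref{dist-commutative}, $\mathsf d(z,U(-U))=\max(|z|,2)\le k$ whenever $z\neq U(-U)$; (iii) the factorization $z^*=\prod_{i=1}^{k}\bigl(g_i(-g_i)\bigr)\in \mathsf Z(A)$ realises this bound, since each $g_i(-g_i)$ is an atom and the assumption $k\ge 3$ ensures $z^*\neq U(-U)$.

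The main obstacle lies in producing an atom of each length $k\in[3,\mathsf D(G)]$ when $G$ is finite. Here I plan to fix an atom $V=g_1\cdot\ldots\cdot g_D\in \mathcal A(G)$ of maximal length $D=\mathsf D(G)$ and consider, for each $k\in[3,D]$,
\[
U_k \,:=\, g_1\cdot\ldots\cdot g_{k-1}\cdot h_k, \qquad h_k\,:=\,-(g_1+\ldots+g_{k-1})\,=\,g_k+\ldots+g_D.
\]
Atomicity of $V$ forces $h_k\neq 0$ (else $g_k\cdot\ldots\cdot g_D$ would be a proper nonempty zero subsum of $V$), and any proper nonempty zero subsum of $U_k$, after replacing the term $h_k$ by $g_k\cdot\ldots\cdot g_D$ whenever it appears, pulls back to a proper nonempty zero subsum of $V$, which is forbidden. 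Hence $U_k$ is itself an atom of length $k$, and combined with the first step this yields $[3,\mathsf D(G)]\subset \Ta(G)$.

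For infinite $G$ it suffices to exhibit an atom of length $k$ for each $k\ge 3$, after which the first step applies verbatim. If $G$ possesses an element $g$ of order at least $k$ (in particular, of infinite order), then $g^{k-1}\cdot\bigl(-(k-1)g\bigr)$ is visibly such an atom. Otherwise $G$ is torsion of bounded exponent and, being infinite, contains $C_p^r\le G$ for some prime $p$ and every $r\in \N$; choosing $r$ large enough that $\mathsf D(C_p^r)=r(p-1)+1\ge k$ and applying the finite-case construction inside $C_p^r$ produces the required atom. Since $\mathcal B(H)\hookrightarrow \mathcal B(G)$ is a divisor-closed submonoid for every subgroup $H\le G$, factorizations and hence tame degrees computed in $\mathcal B(H)$ agree with those computed in $\mathcal B(G)$, so $\Ta(H)\subset \Ta(G)$ and the conclusion follows.
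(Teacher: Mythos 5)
Your proposal is correct and follows essentially the same route as the paper: for an atom $U=g_1\cdot\ldots\cdot g_k$ of length $k$, the element $A=U(-U)$ has $U(-U)$ as its unique factorization divisible by $U$, every other factorization lies at distance at most $k$, and $\prod_{i=1}^k g_i(-g_i)$ realizes distance exactly $k$, giving $\mathsf t(A,U)=k$. The only difference is that you work out in detail the existence of an atom of each length $k\in[3,\mathsf D(G)]$ (and, in the infinite case, via elements of large order or subgroups $C_p^r$), a point the paper treats as obvious; your justifications of these facts are correct.
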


\begin{proof}
Let $m \in \N_{\ge 3}$, and if $G$ is finite, we suppose that $m \le \mathsf D (G)$. It is sufficient to show that $m \in \Ta (G)$.

Clearly, there is a $U \in \mathcal A (G)$ with $|U|=m$, say $U = g_1 \cdot \ldots \cdot g_m$. For $i \in [1,m]$, we set $V_i = (-g_i)g_i$. We consider the element $A = (-U)U \in \mathcal B (G)$ and claim that $\mathsf t (A,U)=m$. Obviously,  $Z'=(-U)U$ is the only factorization of $A$ divisible by $U$. Thus, if $Z= V_1 \cdot \ldots \cdot V_m \in \mathsf Z (A)$, then $\mathsf d (Z,Z')=m$. Since every factorization $Z'' \in \mathsf Z (A)$ not divisible by $U$ has length $|Z''| \le m$, it follows that $\mathsf d (Z'',Z') \le \max \{|Z''|,|Z'|\} \le m$. Thus we obtain that $\mathsf t (A,U)=m$.
\end{proof}

\medskip
\begin{proof}[Proof of Theorem \ref{5.1}]~
If $r=0$, then $|G|=1$ and both $H$ and $\mathcal B (G)$ are factorial whence $\mathsf t (H)=\mathsf t (G)=0$ and $\Ta (H)=\Ta (G)=\emptyset$. Let $r \ge 1$,
$(e_1, \ldots, e_r)$ be a basis of $G = C_2^r$, and let $e_0 = e_1+ \ldots + e_r$. We use that $\mathsf D (G)=r+1$, $\mathsf t (H) \in \Ta (H)$, and that
\[
[3, \mathsf D (G)] \subset \Ta (G) \subset \Ta (H) \,.
\]
If there is a nonzero class  having at least two distinct prime divisors, then $2 \in \Ta (H)$ by Lemma \ref{5.3}.3.
For every $s \in [1,r]$, $\mathcal B (C_2^s)$ is a divisor-closed submonoid of $\mathcal B (C_2^r)$ whence $\Ta (C_2^s) \subset \Ta (G)$. By \cite[Theorem 5.1]{Ga-Ge-Sc15a}, we have
\begin{itemize}
\item If $r=1$, then $\mathsf t (H) = 2$ and $\mathsf t (G) = 0$.

\item If $r=2$, then $\mathsf t (H)=\mathsf t (G)=3$.

\item If $r=3$, then $\mathsf t (G) = 4$, and if  one nonzero class contains at least two distinct prime divisors, then $\mathsf t (H) = 5$.
\end{itemize}

\smallskip
1. If $r=1$, then $\mathsf t (H)=2$ whence $\Ta (H) = \{2\}$, and $\mathcal B (G)$ is factorial whence $\mathsf t (G) = 0$ and $\Ta (G) = \emptyset$.

\smallskip
2. Suppose that $r=2$. Then $\mathsf t (H) = \mathsf t (G) = 3$. It is easy to see that $\Ta (G) = \{3\}$. If $H$ is a commutative Krull monoid having at least two distinct prime divisors in a nonzero class, then $\Ta (H) = [2,3]$.

\smallskip
3. Suppose that $r = 3$. We set
\[
U = e_0e_1 e_2 e_3, \
W = (e_1+e_2)^2, \ X = e_0(e_1+e_2)e_3, \ \text{ and } \ Y = (e_1+e_2)e_1e_2 \,.
\]
Then $A = UW = XY$ and $\mathsf t (A,U)=2$ which implies that $\min \Ta (G) = 2$. Since $\mathsf t (G)=4$ and $\Ta (C_2^2) \subset \Ta (G)$, it follows that $\Ta (G)=[2,4]$. Suppose that $H$ is a commutative Krull monoid having at least two distinct prime divisors in a nonzero class, then $\mathsf t (H) = 5$ and hence $\Ta (G) = [2,4]$ implies that $\Ta (H) = [2,5]$.

\smallskip
4. We continue with the following  assertions.

\noindent

\begin{enumerate}
\item[{\bf A1.}\,] Let $r \ge 4$ be even. Then there are a $U \in \mathcal A (G)$ and, for every $\nu \in [1,r]$, an $A^{(\nu)} \in \mathcal B (G)$ such that $\mathsf t (A^{(\nu)}, U) = 2 - \nu + r^2/2$.

\item[{\bf A2.}\,] Let $r \ge 5$ be odd. Then there are a $U \in \mathcal A (G)$ and, for every $\nu \in [1,r-1]$, an $A^{(\nu)} \in \mathcal B (G)$ such that $\mathsf t (A^{(\nu)}, U) = 2 - \nu + r (r-1)/2$.

\item[{\bf A3.}\,.] Let $r \ge 4$ be congruent two modulo $4$. Then there are a $U \in \mathcal A (G)$ and, for every $\nu \in [1,r-2]$, an $A^{(\nu)} \in \mathcal B (G)$ such that $\mathsf t (A^{(\nu)}, U) = 3 - \nu + r (r-2)/2$.

\item[{\bf A4.}\,.] Let $r \ge 3$ be divisible by $4$.     Then there are a $U \in \mathcal A (G)$ and, for every $\nu \in [1,r-2]$, an $A^{(\nu)} \in \mathcal B (G)$ such that $\mathsf t (A^{(\nu)}, U) = 2 - \nu + r (r-2)/2$.
\end{enumerate}

{\it Proof of}\, {\bf A1}.\,  We set $U = e_0 \prod_{j=1}^r (e_0+e_j)$ and, for all $i, \nu \in [1,r]$, we define
\[
\begin{aligned}
U_i = e_i^2, \quad  V_i & = (e_0+e_i)e_i^{-1} \prod_{j=1}^r e_j , \quad  V_0^{(\nu)} = e_0 (e_1+ \ldots + e_{\nu}) \prod_{j=\nu+1}^r e_j ,  \\
 W^{(\nu)} & = (e_1 + \ldots + e_{\nu})e_1 \cdot \ldots \cdot e_{\nu},  \quad \text{and} \quad A^{(\nu)} = V_0^{(\nu)} \prod_{j=1}^r V_j \,.
\end{aligned}
\]
Then $U, V_0^{(\nu)}, W^{(\nu)}, V_1, \ldots, V_r, U_1, \ldots, U_r \in \mathcal A (G)$,
\[
z = V_0^{(\nu)} \prod_{j=1}^r V_j \in \mathsf Z (A^{(\nu)}), \quad z' = U W^{(\nu)} \prod_{j=1}^{\nu} U_j^{(r-2)/2} \prod_{j=\nu+1}^r U_j^{r/2} \in \mathsf Z (A^{(\nu)}) \,,
\]
and $z'$ is the only factorization of $A^{(\nu)}$ which is divisible by $U$. Therefore we obtain that
\[
\mathsf t (A^{(\nu)}, U) = \mathsf d (z,z') = |z'| = 2 + \nu (r-2)/2 + (r-\nu)r/2  = 2 - \nu + r^2/2 \,.
\]
 \qed[Proof of {\bf A1.}]

\smallskip
{\it Proof of}\, {\bf A2}.\, We set $U =  \prod_{j=1}^r (e_0+e_j)$ and, for all $i \in [1,r]$ and all $\nu \in [1, r-1]$, we define
\[
\begin{aligned}
U_i = e_i^2, \quad  V_i & = (e_0+e_i)e_i^{-1} \prod_{j=1}^r e_j , \quad  V_0^{(\nu)} = (e_0 + e_r) (e_1+ \ldots + e_{\nu}) \prod_{j=\nu+1}^{r-1} e_j ,  \\
 W^{(\nu)} & = (e_1 + \ldots + e_{\nu})e_1 \cdot \ldots \cdot e_{\nu},  \quad \text{and} \quad A^{(\nu)} = V_0^{(\nu)} \prod_{j=1}^{r-1} V_j \,.
\end{aligned}
\]
Then $U, V_0^{(\nu)}, W^{(\nu)}, V_1, \ldots, V_r, U_1, \ldots, U_r \in \mathcal A (G)$,
\[
z = V_0^{(\nu)} \prod_{j=1}^{r-1} V_j \in \mathsf Z (A^{(\nu)}), \quad z' = U W^{(\nu)} \prod_{j=1}^{\nu} U_j^{(r-3)/2} \prod_{j=\nu+1}^r U_j^{(r-1)/2} \in \mathsf Z (A^{(\nu)}) \,,
\]
and $z'$ is the only factorization of $A^{(\nu)}$ which is divisible by $U$. Therefore we obtain that
\[
\mathsf t (A^{(\nu)}, U) = \mathsf d (z,z') = |z'| = 2 + \nu (r-3)/2 + (r-\nu)(r-1)/2  = 2 - \nu + r (r-1)/2 \,.
\]
 \qed[Proof of {\bf A2.}]

\smallskip
{\it Proof of}\, {\bf A3}.\, Consider $G$ as an $\F_2$-vector space and set $U_i = e_i^2$ for all $i \in [1,r]$. Obviously,
\[
U = e_0(e_0+e_1+e_2)(e_0+e_2+e_3) \cdot \ldots \cdot (e_0+e_{r-2}+e_{r-1})(e_0+e_{r-1}+e_1)
\]
is a zero-sum sequence of length $|U|=r$. Since
\[
(e_0+e_1+e_2, e_0+e_2+e_3, \ldots, e_0+e_{r-2}+e_{r-1}, e_0+e_{r-1}+e_1) = (e_1,
\ldots , e_r) \cdot A
\]
and
\[
A = \left( \begin{array}{ccccccc}
0 & 1 & 1 & 1 & \dots & 1 & 0 \\
0 & 0 & 1 & 1 & \dots & 1 & 1 \\
1 & 0 & 0 & 1 & \dots & 1 & 1 \\
1 & 1 & 0 & 0 & \dots & 1 & 1 \\
\vdots& \vdots &\vdots & \dots& \vdots& \vdots \\
1 & 1 & 1 & 1 & \dots & 0 & 0 \\
1 & 1 & 1 & 1 & \dots & 1 & 1
     \end{array} \right)  \quad \in M_{r, r-1} (\F_2)
\]
has rank $r-1$, it follows that $U \in \mathcal A (G)$. For all $i \in [1,r]$ and all $\nu \in [1, r-2]$, we define
\[
\begin{aligned}
V_{\nu}  & = (e_0+e_{\nu}+e_{\nu+1})e_{\nu}^{-1} e_{\nu+1}^{-1} \prod_{j=1}^r e_j , \quad  V_{r-1}  = (e_0+e_{r-1}+e_{1})e_{r-1}^{-1} e_{1}^{-1} \prod_{j=1}^r e_j, \\  V_0^{(\nu)} & = e_0  (e_1+ \ldots + e_{\nu}) \prod_{j=\nu+1}^{r} e_j ,  \qquad
 W^{(\nu)}  = (e_1 + \ldots + e_{\nu})e_1 \cdot \ldots \cdot e_{\nu},   \\ A^{(\nu)} & = V_0^{(\nu)} \prod_{j =1}^{r-1} V_j \,, \quad \text{and} \quad U_i = e_i^2 \,.
\end{aligned}
\]
Then $V_0^{(\nu)}, W^{(\nu)}, V_1, \ldots, V_{r-1}, U_1, \ldots, U_r \in \mathcal A (G)$,
\[
z = V_0^{(\nu)} \prod_{j=1}^{r-1} V_j \in \mathsf Z (A^{(\nu)}), \quad z' = U W^{(\nu)} \prod_{j=1}^{\nu} U_j^{(r-4)/2} \prod_{j=\nu+1}^{r-1} U_j^{(r-2)/2} U_r^{r/2} \in \mathsf Z (A^{(\nu)}) \,,
\]
and $z'$ is the only factorization of $A^{(\nu)}$ which is divisible by $U$. Therefore we obtain that
\[
\mathsf t (A^{(\nu)}, U) = \mathsf d (z,z') = |z'| = 2 + \nu (r-4)/2 + (r-1-\nu)(r-2)/2  + r/2 = 3- \nu + r (r-2)/2 \,.
\]
\qed[Proof of {\bf A3.}]

{\it Proof of}\, {\bf A4}.\, We set $e_{r+1}:= e_1$ and define
\[
U = (e_0+e_1+e_2)(e_0+e_2+e_3) \cdot \ldots \cdot (e_0+e_r+e_1) \,.
\]
Since $r$ is divisible by $4$, we infer that $U \in \mathcal A (G)$ and clearly we have $|U|=r$. For all $i \in [1,r]$ and all $\nu \in [1, r-2]$, we define
\[
\begin{aligned}
U_i = e_i^2, \quad  V_i & = (e_0+e_i+e_{i+1})e_i^{-1} e_{i+1}^{-1} \prod_{j=1}^r e_j , \quad  V_0^{(\nu)} = (e_0 + e_{r-1} + e_r) (e_1+ \ldots + e_{\nu}) \prod_{j=\nu+1}^{r-2} e_j ,  \\
 W^{(\nu)} & = (e_1 + \ldots + e_{\nu})e_1 \cdot \ldots \cdot e_{\nu},  \quad \text{and} \quad A^{(\nu)} = V_0^{(\nu)} \prod_{j \in [1,r] \setminus \{r-1\}} V_j \,.
\end{aligned}
\]
Then $V_0^{(\nu)}, W^{(\nu)}, V_1, \ldots, V_r, U_1, \ldots, U_r \in \mathcal A (G)$,
\[
z = V_0^{(\nu)} \prod_{j \in [1,r] \setminus \{r-1\}} V_j \in \mathsf Z (A^{(\nu)}), \quad z' = U W^{(\nu)} \prod_{j=1}^{\nu} U_j^{(r-4)/2} \prod_{j=\nu+1}^r U_j^{(r-2)/2} \in \mathsf Z (A^{(\nu)}) \,,
\]
and $z'$ is the only factorization of $A^{(\nu)}$ which is divisible by $U$. Therefore we obtain that
\[
\mathsf t (A^{(\nu)}, U) = \mathsf d (z,z') = |z'| = 2 + \nu (r-4)/2 + (r-\nu)(r-2)/2  = 2 - \nu + r (r-2)/2 \,.
\]
\qed[Proof of {\bf A4.}]

\smallskip
5. We suppose that $r \ge 4$ and will proceed by induction on $r$.  If $r$ is even, then \cite[Theorem 5.1]{Ga-Ge-Sc15a} implies that
\[
\mathsf t (H) = \mathsf t (G) = 1 + r^2/2 \,,
\]
and
\[
[2, \mathsf D (G)] \subset \Ta (G) \subset \Ta (H) \subset [2, \mathsf t (H)] = [2, \mathsf t (G)] = [2, 1 + r^2/2 ] \,.
\]
whence it remains to show that
\begin{equation} \label{even}
[r+2, r^2/2]= [\mathsf D (G)+1, r^2/2] \subset \Ta (G) \,.
\end{equation}
If $r$ is odd, then $2 + r(r-1)/2 \in \Ta (G)$ and by the induction hypothesis
\[
[2, 1 + (r-1)^2/2] = \Ta (C_2^{r-1}) \subset \Ta (G)
\]
whence it remains to show that
\begin{equation} \label{odd}
[2 + (r-1)^2/2, 1 + \frac{r(r-1)}{2}] \subset \Ta (G) \,.
\end{equation}

(i) Suppose that $r=4$. By \eqref{even},  it remains to show that $[6,8] \subset \Ta (G)$, and this follows from {\bf A1}.

(ii) Suppose that $r=5$. By \eqref{odd}, remains to show that $[10, 11] \subset \Ta (G)$, and this follows from {\bf A2.}

(iii) Let $r \ge 6$ be even. By the induction hypothesis, we have
\[
[2, 3+ r(r-3)/2] = [2, 2+ (r-1)(r-2)/2 ] \subset \Ta (C_2^{r-1}) \subset \Ta (G)
\]
whence it remains to show that
\[
[4 + r(r-3)/2, r^2/2] \subset \Ta (G) \,.
\]
By {\bf A1}, we infer that
\[
[2 + r(r-2)/2, 1 + r^2/2] \subset \Ta (G) \,,
\]
whence it remains to verify that
\[
[4 + r(r-3)/2, 1 + r(r-2)/2] \subset \Ta (G) \,.
\]
If $r$ is divisible by $4$, this follows from {\bf A4}. If $r$ is congruent $2$ modulo $4$, this follows from {\bf A3}.

(iv) Let $r \ge 7$ be odd. By \eqref{odd},  it remains to show that
\[
[2+  (r-1)^2/2, 1+ r(r-1)/2] \subset \Ta (G) \,.
\]
By {\bf A2}, the interval $[2 + (r-1)(r-2)/2, 1 + r(r-1)/2] \subset \Ta (G)$ whence the assertion follows.
\end{proof}

\medskip
\begin{proof}[Proof of Theorem \ref{5.2}]~
We may suppose that $H$ is reduced and we consider a divisor theory $H \hookrightarrow F = \mathcal F (P)$. Then $G = \mathsf q (F)/\mathsf q (H)$ is the class group of $H$, and for an  element $a \in F$, $|a|_F = |a| \in \N_0$ denotes the length of $a$ with respect to $F$.

1. and 3. Suppose that every class contains a prime divisor.
Theorem \ref{5.1}.2 shows that $\Ta (C_2 \oplus C_2) = \{3\}$ and Similarly, we can show that $\Ta (C_3) = \{3\}$. Lemma \ref{5.6} implies that $[3, \mathsf D (G)]$ resp. $\N_{\ge 3} \subset \Ta (G)$. If there is a nonzero class containing at least two prime divisors, then $2 \in \Ta (H)$ by Lemma \ref{5.3}.3. Since $\Ta (G) \subset \Ta (H)$ by Lemma \ref{5.3}.2, it remains to show that $2 \in \Ta (G)$ whenever $\mathsf D (G) \ge 4$.
We distinguish four cases.

(i) Suppose that $G$ contains an element $g$ of infinite order. Then $U = (2g)(-g)^2$ and $V = (-3g)(2g)g$ are atoms. Since  $\mathsf Z(UV) = \{U V,  (-g)g  (-3g)(2g)(2g)(-g)\}$, it follows that $\mathsf t (UV,U) = 2$.

(ii) Suppose that  $G$ contains an element $g$ with $\ord (g)=n \ge 4$.  Then $U=g^n$ and $V=(-2g)(2g)$ are atoms. Since $\mathsf Z(UV)=\{U V, g^{n-2}(2g)\cdot (-2g)g^{2}\}$, it follows  that $\mathsf t(UV, U)=2$.

(iii) If all elements of $G$ have order two, then $2 \in \Ta (G)$ by Theorem \ref{5.1}.4.

(iv) Suppose that  all elements of $G$ have order three, and let $e_1$ and $e_2$ be two independent elements of order $3$. Then  $U=(e_1+e_2)^3$ and $V=(e_1+e_2)e_1^2e_2^2$ are atoms. Since
$\mathsf Z(UV)=\{U V, e_1e_2(e_1+e_2)^2\cdot e_1e_2(e_1+e_2)^2\}$, it follows that $\mathsf t(UV, U)=2$.

\smallskip
2. Suppose that every class contains at least  $\mathsf D(G)+1$ prime divisors. We  start with the following assertion.

\begin{enumerate}
\item[{\bf A.}\,] For every $a\in H$ and every $u\in \mathcal A(H)$ with $\mathsf t(a,u)\ge \mathsf D(G)+1$  there exists $b\in H$  with $|b| < |a|$ and $\mathsf t(b,u)\ge \mathsf t(a, u)-1$.
\end{enumerate}

{\it Proof of}\, {\bf A}.\, Let $a \in H$ and $u \in \mathcal A (H)$ with $\mathsf t (a,u) \ge \mathsf D (G)+1$. By definition of $\mathsf t (a,u)$,
there exists a factorization $z=u_1\cdot\ldots\cdot u_k \in \mathsf Z (a)$, where $k \in \N$ and $u_1,\ldots, u_k \in \mathcal A(H)$, such that for all factorization $z' \in \mathsf Z (a) \cap u \mathsf Z (H)$, we have $\mathsf d(z,z')\ge \mathsf t(a,u)$.

For every atom $v \in H$,     we have $|v| \in [1, \mathsf D(G)]$ (\cite[Theorem 5.1.5]{Ge-HK06a}) and $v$ is a prime in $H$ if and only if $|v|=1$. Since $\mathsf t (a,v)=0$ for all primes $v \in H$, we infer that  $u$ is not a prime. If $v$ is a prime dividing $a$, then $\mathsf Z (a) = v \mathsf Z (v^{-1}a)$ whence $\mathsf t (a, u) = \mathsf t (a v^{-1}, u)$. Thus we may assume without restriction that $a$ is not divisible by any prime of $H$. This implies that $\max \mathsf L (c) \le |c|/2$ for every divisor $c$ of $a$.

Suppose there exists an $i \in [1,k]$, say $i=1$, such that $u \t_F u_2 \cdot \ldots \cdot u_k$. We define $b=u_2\cdot\ldots\cdot u_k$ and clearly we have $|b| < |a|$.
Assume to the contrary that $\mathsf t(b,u)\le \mathsf t(a, u)-2$. Then for $z_1 = u_2 \cdot \ldots \cdot u_k \in \mathsf Z (b)$ there exists a factorization $z_1' = u w_1 \cdot \ldots \cdot w_s$, with $s \in \N$ and $w_1, \ldots, w_s \in \mathcal A (H)$, such that $\mathsf d (z_1, z_1') \le \mathsf t (a,u)-2$. Setting $z'=u_1u w_1 \cdot \ldots \cdot w_s \in \mathsf Z (a)$ we obtain that $\mathsf d (z,z') \le \mathsf d (z_1, z_1') \le \mathsf t (a,u)-2$, a contradiction.

Now we suppose that $u$ does not divide (in $F$) any subproduct of $u_1 \cdot \ldots \cdot u_k$. This implies $k \le |u|$.
If $|au^{-1}| \le k$, then
\[
\mathsf t(a,u)\le \max \{k, 1+ |au^{-1}|/2\} \le k \le |u| \le \mathsf D(G) \,,
\]
a contradiction.
Thus $|au^{-1}| \ge k+1$. Therefore there exist $i\in [1,k]$, say $i=1$, and $p_1, p_2\in P$ with $p_1p_2\mid_F u_1$ such that $u\mid_F a(p_1p_2)^{-1}$. If $u_1=p_1p_2$, then $u \mid_F u_2 \cdot \ldots \cdot u_k$, a contradiction.

Thus  $u_1 \ne p_1p_2$.
Since every class contains at least  $\mathsf D(G)+1$ prime divisors and $|u| \le \mathsf D (G)$, there exists a  $p\in P$ with $p\nmid_F u$ and $[p]=[p_1]+[p_2]$. We define $u_1'=u_1p(p_1p_2)^{-1}$ and $b=ap(p_1p_2)^{-1}$.  Then $u_1'\in \mathcal A(H)$ and $b \in H$ with $|b| <|a|$. Assume to the contrary that $\mathsf t(b,u)\le \mathsf t(a, u)-2$. Then for $z_2=u_1' u_2 \cdot \ldots\cdot u_k \in \mathsf Z (b)$  there exists a factorization $z_2'=u w_1\cdot \ldots \cdot w_s \in \mathsf Z (b)$ such that $\mathsf d(z_2, z_2')\le \mathsf t(a,u)-2$, where $s\in \N$ and $w_1,\ldots, w_s\in \mathcal A(H)$.
Since $p\nmid_F u$, we have $u_1'\neq u$.  If there exists $j\in [1,s]$ such that $w_j=u_1'$, then $z^*=u w_1\cdot\ldots \cdot w_{j-1} u_1 w_{j+1}\cdot\ldots\cdot w_s$ is a factorization of $a$ and $\mathsf d(z,z^*)=\mathsf d(z_2, z_2')\le \mathsf t(a, u)-2$, a contradiction. Thus $u_1'\not\in \{u, w_1,\ldots, w_s\}$. Since $p \mid_F b$ but $p\nmid_F u$, there exists $i\in [1,s]$, say $i=1$, such that $p\mid_F w_1$ and $w_1\not\in \{u_1', u_2 ,\ldots, u_k\}$.
Then $w_1'=w_1p_1p_2p^{-1}$ is a product of at most two atoms, say $w_1'=v_1 v_2$, where $v_1\in \mathcal A(H)$ and $v_2\in \mathcal A(H)\cup \{1_H\}$, and  $z'=u v_1 v_2 w_2\cdot\ldots\cdot w_s \in \mathsf Z (a)$ with $\mathsf d(z, z')\le \mathsf d(z_2, z_2')+1\le \mathsf t(a,u)-1$, a contradiction.
 \qed[Proof of {\bf A.}]

\medskip
By 1., we have $[2, \mathsf D (G)] \subset \Ta (H)$ whence the assertion follows if  $\mathsf t (H)\le \mathsf D(G)$.
We suppose that $\mathsf t(H)\ge \mathsf D(G)+1$ and have to verify that $[\mathsf D (G), \mathsf t (H)] \subset \Ta (H)$.
Let $a\in H$ and $u\in \mathcal A(H)$ such that $\mathsf t(a,u)=\mathsf t(H)$.
Since $\mathsf t(b,v)\le \max \mathsf L (b) \le |b|$ for all $b\in H$ and $v\in \mathcal A(H)$,  {\bf A} implies that there exist $a=b_0,\ldots, b_k \in H$ with $k\in \N$,  $|b_0|>\ldots>|b_k|$, and $\mathsf t(b_{i+1},u)\ge \mathsf t(b_i,u)-1$ for every $i\in [1, k-1]$ such that $\mathsf t(b_k, u)\le \mathsf D(G)$.
Therefore $[\mathsf D(G), \mathsf t(H)] \subset \{\mathsf t(b_i, u)\mid i\in [0,k ]\} \subset \Ta(H)$.
\end{proof}

\providecommand{\bysame}{\leavevmode\hbox to3em{\hrulefill}\thinspace}
\providecommand{\MR}{\relax\ifhmode\unskip\space\fi MR }
\providecommand{\MRhref}[2]{%
  \href{http://www.ams.org/mathscinet-getitem?mr=#1}{#2}
}
\providecommand{\href}[2]{#2}

\end{document}